\theoremstyle{plain}
\newtheorem{theorem}{Theorem}[section]
\newtheorem*{theorem*}{Theorem}
\newtheorem{corollary}[theorem]{Corollary}
\newtheorem{conjecture}[theorem]{Conjecture}
\newtheorem{lemma}[theorem]{Lemma}
\newtheorem{proposition}[theorem]{Proposition}
\newtheorem*{conjecture*}{Conjecture}
\newtheorem*{proposition*}{Proposition}
\theoremstyle{definition}
\newtheorem*{remark}{Remark}
\newtheorem*{remark*}{Remark}
\newtheorem*{remarks*}{Remarks}
\newtheorem*{definition*}{Definition}
\newtheorem*{example*}{Example}
\newtheorem*{examples*}{Examples}
\theoremstyle{remark}
\numberwithin{equation}{section}
\newcommand{\R}{\mathbb R}
\newcommand{\N}{\mathbb N}
\newcommand{\Z}{\mathbb Z}
\newcommand{\C}{\mathbb C}
\newcommand{\HH}{\mathbb H}
\newcommand{\Q}{{\mathbb Q}}
\def\({\left(}
\def\){\right)}
\def\calG{\mathcal{G}}
\def\calF{\mathcal{F}}
\def\calL{\mathcal{L}}
\def\calR{\mathcal{R}}
\def\lp{\left(}
\def\rp{\right)}
\def\a{\alpha}
\def\z{\zeta}
\def\t{\tau}
\def\SL{\mathrm{SL}}
\def\del{  \partial}
\renewcommand{\pmod}[1]{\,\,({\rm mod}\,\,{#1})}
\newcommand{\wh}[1]{\widehat{#1}}
\newcommand{\sgn}{\text{sgn}}
\def\lp{\left(}
\def\rp{\right)}
\begin{document}

\title[Graph Schemes, Graph Series, and Modularity]{Graph Schemes, Graph Series, and Modularity}

\author{Kathrin Bringmann}
\address{University of Cologne, Department of Mathematics and Computer Science, Weyertal 86-90, 50931 Cologne, Germany}
\email{kbringma@math.uni-koeln.de}

\author{Chris Jennings-Shaffer}

\address{Department of Mathematics, University of Denver, 2390 S. York St. Denver, CO 80208}
\email{christopher.jennings-shaffer@du.edu}

\author{Antun Milas}
\address{Department of Mathematics, SUNY-Albany, Albany NY 12222}
\email{amilas@albany.edu}

\begin{abstract} 
To a simple graph we associate a so-called graph series, which can be viewed as the Hilbert--Poincar\'e series of a certain infinite jet scheme. We study new $q$-representations and examine modular properties of several examples including Dynkin diagrams of finite and affine type. 
Notably, we obtain new formulas for graph series of type $A_7$ and $A_8$ in terms of ``sum of tails" series, and of type $D_4$ and $D_5$ in the form of indefinite theta functions of signature $(1,1)$. We also study examples related to sums of powers of divisors corresponding to $5$-cycles. For several examples of graphs, we prove that graph series are so-called mixed quantum modular forms.
\end{abstract}
\maketitle

\section{Introduction and statement of results}

The interplay between (multi) $q$-hypergeometric series and modular forms has been studied extensively over the years.
In theoretical physics and representation theory, $q$-hypergeometric series famously appeared in connection with ``fermionic'' formulas of characters of rational 
conformal field theories. If their summands involve a positive definite quadratic form they are sometimes called  Nahm sums \cite{NRT}.
Important examples of this type emerged much earlier as ``sum sides'' of several classical $q$-series identities (e.g. Andrews--Gordon identities).
More recently, Nahm sums have appeared in several areas including vertex algebras \cite{Kawasetsu,Li,MP, Penn}, DAHA \cite{CF}, wall-crossing phenomena and quivers \cite{CNV,KS,LM1}, in connection to colored HOMFLY-PT polynomials \cite{Kucharski}, and in formulas for the ``tail" of colored Jones polynomials \cite{BO,GL,KO}.

To define a Nahm sum we choose a positive definite quadratic form $Q : \mathbb{Z}^r \rightarrow \mathbb{Q}$ and consider 
\begin{equation} \label{Nahm}
\sum_{\bm{n}\in\N_0^r} \frac{q^{Q({\bm n})+\bm{b} \cdot {\bm n}+c}}{(q)_{n_1} \cdots (q)_{n_r}},
\end{equation}
where ${\bm n}=(n_1,...,n_r)$, $(a;q)_n=(a)_n:=\prod_{j=0}^{n-1}(1-aq^j)$, $\bm{b} \in \mathbb{Q}^r$, and $c \in \mathbb{Q}$.
One important aspect in the study of these series is to investigate their modularity by considering
a suitable constant term in the asymptotic expansion (see \cite{CGZ}).
Another interesting question is that of how to express (\ref{Nahm}) as a ``bosonic'' sum, which may or may not be modular. Prominent examples of Nahm sums come from Cartan matrices of classical type and their inverses, as they are all expected to be modular for specific choices of $\bm b$ and $c$ (see \cite{WZ} for some recent results).
       
In this work, we study closely related but different $q$-hypergeometric series coming from jet schemes and vertex algebras. Our aim is to investigate {\em graph series} defined as
\begin{equation}\label{hg}
H_{\Gamma}(q):=\sum_{\bm{n}\in\N_0^r} \frac{q^{\frac12 {\bm n} C_\Gamma {\bm n}^T + n_1+\cdots + n_r }}{(q)_{n_1} \cdots (q)_{n_r}},
\end{equation}
where $C_\Gamma$ is the (symmetric) adjacency  matrix of a graph $\Gamma$. For many graphs, including all simple graphs, this series is not a Nahm sum (the adjacency matrix is not positive definite!). Another issue is that the matrix $C_\Gamma$ can be singular, and indeed often is, so it is important that we include the linear term in the exponent.
  The main motivation for studying graph series comes from two sources. As explained in Section 2, for a given graph $\Gamma$, there is a  graded commutative algebra $J_\infty(R)$, the ring of functions of the infinite jet scheme $X_\infty$, where $X={\rm Spec}(R)$, whose Hilbert series is given by (\ref{hg}). This infinitely-generated algebra is closely related to a certain principal subspace vertex algebra constructed from the adjacency matrix of $\Gamma$ \cite{Kawasetsu,Li, MP,Penn}, in the sense that its character is the Hilbert series of $J_\infty(R)$.

In this paper we make the first steps in addressing modularity properties of graph series. 
We focus on examples coming from Dynkin diagrams of finite type, as well as several affine Dynkin diagrams. We show that in many examples their modular properties can be quite interesting in spite of the simplicity of the graph. Since the form of (\ref{hg}) does not give any clues about modularity, we first must obtain suitable bosonic or combinatorial formulas in terms of functions whose modular properties are transparent. For graph series of type $A$ (paths), studied in \cite{JM}, our first result is two new representations for graph series of type $A_7$ and $A_8$ (i.e., paths with seven and eight nodes,  respectively). 
\begin{theorem}\label{A7.A8}
We have 
\begin{align*}
H_{A_7}(q)&=\frac{q^{-1}}{(1-q)(q)_\infty^4} \left(-1+(q)_\infty D(q) +G(q)+(q)_\infty \right), \\
H_{A_8}(q)&= \frac{q^{-2}}{(q)_\infty^4} \biggl(-1+ (q)_\infty  + 3  D(q)  - 2 G(q)\biggr),
\end{align*}
where $D(q):=\sum_{n \geq 1} \frac{q^n}{1-q^n}$ and $G(q):=\sum_{n \geq 0} ((q)_n -(q)_\infty)$.
\end{theorem}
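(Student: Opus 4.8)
The plan is to derive both formulas from a single recursive structure for path graph series. Write $H_{A_r}(q) = \sum_{\bm n \in \N_0^r} q^{n_1 n_2 + n_2 n_3 + \cdots + n_{r-1}n_r + n_1 + \cdots + n_r}/((q)_{n_1}\cdots(q)_{n_r})$, and peel off the sum over the terminal node $n_r$. Since $n_r$ only couples to $n_{r-1}$ through the quadratic term, summing a geometric-type series in $n_r$ via the $q$-binomial theorem, $\sum_{n_r \ge 0} q^{(n_{r-1}+1)n_r}/(q)_{n_r} = 1/(q^{n_{r-1}+1};q)_\infty = (q^{n_{r-1}};q)_\infty^{-1}\cdot(1-q^{n_{r-1}})$... more precisely $\sum_{n\ge 0} z^n/(q)_n = 1/(z;q)_\infty$, so with $z = q^{n_{r-1}+1}$ we get $(q^{n_{r-1}+1};q)_\infty^{-1} = (q)_\infty^{-1}(q)_{n_{r-1}}$. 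This collapses the last node and produces a $(q)_{n_{r-1}}$ in the numerator which cancels the $(q)_{n_{r-1}}$ in the denominator. Iterating and bookkeeping the resulting ``defect'' factors is exactly the mechanism by which sum-of-tails series $G(q) = \sum_{n\ge 0}((q)_n - (q)_\infty)$ and the Lambert series $D(q)$ enter. I expect the cleanest route is to use the already-established representations for smaller paths from \cite{JM} (or re-derive $H_{A_k}$ for $k \le 6$ by the same peeling), and then push the recursion two more steps to reach $A_7$ and $A_8$.

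Concretely, I would first record the base cases: $H_{A_1}(q)$, $H_{A_2}(q)$, etc., each of which after the geometric summation becomes a finite sum whose inner terms telescope. The key identity to have on hand is that after collapsing one node, $\sum_{n\ge 0} q^{an+a}\cdot(\text{stuff})_n/(q)_n$-type sums produce differences of the form $(q)_\infty^{-1}\sum_n (\text{stuff})_n(1-q^{n+1}) = (q)_\infty^{-1}(\text{stuff} \text{ shifted})$, and the ``$-1$'' and ``$+(q)_\infty$'' corrections in the theorem come from the $n=0$ boundary terms and from rewriting $\sum_n((q)_n - (q)_\infty)$ versus $\sum_{n\ge 1}$. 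I would then identify the two genuinely nontrivial series that survive: one accounting for a single uncancelled $(q)_n$ tail, giving $G(q)$, and one accounting for a ``double'' interaction that collapses to the Lambert series $D(q)$ via the standard identity $\sum_{n\ge 1} nq^n \cdot(\cdots)$ or via $\sum_{n\ge 1}\frac{q^n}{1-q^n}$ appearing as a derivative/logarithmic term. Matching coefficients of $(q)_\infty D(q)$, $G(q)$, $(q)_\infty$, and the constant against the structure dictated by the $A_7$ versus $A_8$ recursion fixes the rational prefactors $\frac{q^{-1}}{(1-q)(q)_\infty^4}$ and $\frac{q^{-2}}{(q)_\infty^4}$.

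The main obstacle will be the combinatorial bookkeeping in the last two iterations: once three or four nodes have been collapsed, the surviving sums are no longer single telescoping sums but involve products like $(q)_n(q)_m$ with $m \le n$ or nested partial theta pieces, and one must carefully track which of these simplify to $D$, which to $G$, and which to elementary $(q)_\infty$-multiples. A secondary subtlety is the appearance of $\frac{1}{1-q}$ only in the $A_7$ prefactor but not in $A_8$; this asymmetry presumably reflects whether the path has an even or odd number of nodes (so that the central node is summed ``first'' versus ``last'' in an optimal peeling order), and getting the peeling order right so that the two formulas come out in the stated clean form — rather than as a messier equivalent expression — will require some experimentation. I would double-check the final identities by expanding both sides as $q$-series to, say, order $q^{20}$.
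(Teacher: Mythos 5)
Your opening move coincides with the paper's: collapsing the outer nodes with Euler's identity \eqref{109} is exactly how the paper obtains the reduction \eqref{shifting}, which for $A_7$ yields $\frac{1}{(q)_\infty^3}\sum_{n\ge 0}q^{-n-2}\bigl(\frac{1}{(q)_\infty}-\frac{1}{(q)_n}\bigr)\bigl((q)_n-(q)_\infty\bigr)$ and for $A_8$ the analogous double sum over $(n_1,n_2)$ weighted by $q^{n_1n_2}$. Up to that point the proposal is fine.

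The gap is everything after that. You defer the evaluation to ``bookkeeping'' and to ``matching coefficients against the structure dictated by the recursion,'' but the stated formulas are not forced by any such structure: one must actually prove that these specific sum-of-tails expressions equal the given combinations of $D(q)$ and $G(q)$, and that is where all the content lies. Concretely, for $A_7$ the paper introduces $F_\zeta(q)=\sum_{n\ge0}\zeta^n\bigl(\frac{1}{(q)_\infty}-\frac{1}{(q)_n}\bigr)\bigl((q)_n-(q)_\infty\bigr)$, splits it using the Andrews--Garvan--Liang identity \eqref{AGL} and the Andrews--Freitas identity \eqref{id1}, observes that the $n=1$ terms cancel --- without which the specialization $\zeta=q^{-1}$, needed to absorb the factor $q^{-n}$, would hit the pole coming from $1-\zeta q$ --- and then evaluates the two surviving sums via a telescoping argument \eqref{EE} (producing $D(q)$) and Gupta's identity \eqref{id2} at $\zeta=q^{-1}$ (producing $G(q)$). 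For $A_8$ one must additionally separate the $n_1n_2=0$ boundary terms to avoid an invalid rearrangement and then invoke Fine's identities \eqref{fine}, \eqref{fine2} for the diagonal sum $\sum_{n_1,n_2\ge1}q^{n_1n_2}/((q)_{n_1}(q)_{n_2})$. None of these ingredients appears in your plan, so the argument does not close. Your heuristic for the extra factor $\frac{1}{1-q}$ in the $A_7$ prefactor is also off: it arises from the $\zeta=q^{-1}$ specialization producing denominators $1-q^{n-1}$, not from a choice of peeling order; and a numerical check to order $q^{20}$, while good practice, is not a proof.
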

Following Zagier \cite{ZaQ}, \textit{quantum modular forms} are functions $f: \mathcal{Q} \to \C \ (\mathcal{Q} \subset \Q)$ whose \textit{obstruction to modularity}
$$
f(x)-(cx+d)^{-k} f \lp \frac{ax+b}{cx+d} \rp, \quad  \left(\begin{matrix} a&b\\c&d\end{matrix}\right) \in \SL_2 \lp \Z \rp
$$
is ``nice" (see Subsection \ref{modularity} for more details).
Combined with known $q$-series representations for $H_{A_j}(q)$, $1 \leq j \leq 6$, given in \cite[Section 7]{JM}, we obtain the following result. 
\begin{corollary} For every $j$, $1 \leq j \leq 8$, $H_{A_j}(q) \in \mathbb{C}[\frac{1}{1-q},q^{-1},\frac{1}{(q)_\infty},D(q),G(q)]$. More precisely, the series $q (q)_\infty^2 H_{A_4}(q)$ is a holomorphic quantum modular form of weight one, while $q (q)^{3}_\infty H_{A_5}(q)$ and $q (1-q) (q)^{3}_\infty H_{A_7}(q)$ are quantum modular forms of weight $\frac32$.
  \end{corollary}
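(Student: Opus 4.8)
The membership assertion requires only routine bookkeeping. For $j\in\{7,8\}$ it is immediate from Theorem~\ref{A7.A8} upon clearing denominators, and for $1\le j\le 6$ one reads off the closed forms recorded in \cite[Section~7]{JM}; in every case the expression obtained is visibly a polynomial in $\frac1{1-q}$, $q^{-1}$, $\frac1{(q)_\infty}$, $D(q)$ and $G(q)$ (for instance $H_{A_1}(q)=\frac1{(q)_\infty}$ by the $q$-binomial theorem, the functions $D$ and $G$ only entering for the larger $j$). This settles the first sentence of the corollary.

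For the quantum modularity the plan is to substitute the explicit $q$-series formulas for $H_{A_4}$, $H_{A_5}$ and $H_{A_7}$, multiply through by the prescribed normalising factors $q(q)_\infty^{2}$, $q(q)_\infty^{3}$ and $q(1-q)(q)_\infty^{3}$, and then, after a few manipulations of sum-of-tails type, rewrite each normalised series as a $\C$-linear combination of pieces whose (quantum) modular behaviour is already known. The relevant identities to set up are the elementary one $G(q)=(q)_\infty\sum_{n\ge1}\bigl((q^{n};q)_\infty^{-1}-1\bigr)$ --- so that, for example, $q(1-q)(q)_\infty^{3}H_{A_7}(q)=D(q)+\sum_{n\ge2}\bigl((q^{n};q)_\infty^{-1}-1\bigr)$ --- together with the classical sum-of-tails identities, which convert such tail sums into $(q)_\infty$ times a partial theta function plus elementary terms. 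The building blocks and their modular behaviour are: partial (and false) theta functions, which are quantum modular forms of weight $\tfrac12$, together with Zagier's ``strange'' function $\sum_{n\ge0}(q)_n$, quantum modular of weight $\tfrac32$ \cite{ZaQ}; the Lambert series $D(q)=\sum_{n\ge1}\sigma_0(n)q^n$, which is, up to an explicit additive constant, a holomorphic quantum modular form of weight one, its obstruction to modularity under $\tau\mapsto-\tfrac1\tau$ being the classical logarithmic term coming from the double pole of $\zeta(s)^2$ at $s=1$, equivalently from the ``constant term'' in the asymptotic expansion of $\sum_{n\ge1}(q^{-n}-1)^{-1}$ (cf.\ \cite{CGZ}); the Dedekind eta function $(q)_\infty=q^{-1/24}\eta(\tau)$, modular of weight $\tfrac12$; and Laurent polynomials in $\frac1{1-q}$, which are trivially quantum modular since their modular translates differ from them by explicit functions that are real-analytic away from finitely many rationals.

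Granting these identities, the assertions follow by the usual weight count. For $A_4$ one shows that $q(q)_\infty^{2}H_{A_4}(q)$ equals a Laurent polynomial in $\frac1{1-q}$ plus a term that is holomorphic on $\HH$ and of weight one --- namely (a multiple of) $D(q)$, or $(q)_\infty$ times a weight-$\tfrac12$ partial theta ($\tfrac12+\tfrac12=1$) --- and hence a holomorphic quantum modular form of weight one. For $A_5$ and $A_7$ one instead uses the sum-of-tails identities to combine the $D(q)$- and $G(q)$-contributions into a single weight-$\tfrac32$ quantum modular piece (a strange function, or $(q)_\infty$ times a weight-one partial-theta/Eichler-integral object, $\tfrac12+1=\tfrac32$), the remaining terms being elementary; collecting weights yields a quantum modular form of weight $\tfrac32$. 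Because the normalisation leaves a factor of $(q)_\infty^{-1}$ in the $A_5$ and $A_7$ cases, which is an ordinary modular form of weight $-\tfrac12$, these are in fact \emph{mixed} quantum modular forms, in accordance with the abstract. Invoking the known cocycle bounds for partial theta functions, for the strange function, and for $\eta$ then finishes the proof.

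The genuinely laborious step is not this final weight/cocycle bookkeeping --- which follows standard templates --- but the preliminary algebra. The main obstacle is to pin down the exact sum-of-tails identities expressing the combinations of $D(q)$, $G(q)$ and powers of $(q)_\infty$ that occur in the formulas for $H_{A_4}$, $H_{A_5}$ and $H_{A_7}$ as (modular) $\times$ (partial theta) plus elementary terms, and then to check that after normalisation the a priori dangerous pieces --- above all the $D(q)$-contributions, which carry a logarithmic obstruction --- either already constitute a genuine weight-one quantum modular form (as for $A_4$) or combine with $G(q)$ so as to cancel the logarithm and leave a weight-$\tfrac32$ quantum modular form (as one expects for $A_5$ and $A_7$). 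Establishing these identities and cancellations is the crux of the argument.
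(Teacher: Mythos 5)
Your strategy coincides with the paper's: the ring membership is read off from Theorem \ref{A7.A8} together with the closed forms for $1\le j\le 6$ recorded from \cite{JM}, and the quantum modularity is obtained by reducing each normalized series to building blocks with known quantum modular behaviour. The paper carries this out inside the proof of Proposition \ref{seriesA}, and the point you defer as ``the crux'' --- pinning down the exact sum-of-tails identities --- is there a matter of citation rather than new computation. Concretely: $q(q)_\infty^2H_{A_4}(q)=D(q)$ exactly, and Lemma \ref{hq} (Bettin--Conrey \cite{BC}) gives weight one; $q(q)_\infty^3H_{A_5}(q)=G(q)$ exactly, and Zagier's identity \eqref{z-identity}, $G(q)=-\frac12H(q)+(q)_\infty\left(\frac12-D(q)\right)$ with $H(q)=\sum_{n\ge1}n\left(\frac{12}{n}\right)q^{\frac{n^2-1}{24}}$ quantum modular of weight $\frac32$ by \cite{Za}, gives weight $\frac32$; for $A_7$ one quotes the identity of \cite[Section 7.4]{JM} expressing $-1+(q)_\infty D(q)+G(q)+(q)_\infty$ as an explicit weight-$\frac32$ partial theta function. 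Since you only assert that such identities exist (``granting these identities\dots'') without producing them, your write-up is a correct plan rather than a complete proof; supplying these two citations closes the only real gap.

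Three smaller corrections. First, the partial theta functions relevant here (those carrying a linear factor $n$ in the summand) are quantum modular of weight $\frac32$, not $\frac12$; your alternative bookkeeping as $(q)_\infty$ times a weight-one Eichler-integral object is consistent, but the blanket statement that partial and false theta functions have weight $\frac12$ is not what is used. Second, the normalization leaves no residual $(q)_\infty^{-1}$ in the $A_5$ case, since $q(q)_\infty^3H_{A_5}(q)$ is precisely $G(q)$; the residual factor occurs only for $A_7$, where your intermediate identity $q(1-q)(q)_\infty^{3}H_{A_7}(q)=D(q)+\sum_{n\ge2}\left((q^{n};q)_\infty^{-1}-1\right)$ is indeed correct. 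Third, $q(q)_\infty^2H_{A_4}(q)$ equals $D(q)$ on the nose, with no additional Laurent polynomial in $\frac{1}{1-q}$ and no additive constant to adjust.
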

 This indicates the possibility of accommodating all $A$-type graph series inside a finitely generated ring. We also investigate asymptotic properties of these graph series as $t \to 0^+$, where $q=e^{-t}$ (see Proposition \ref{seriesA}). Such analysis is important from the geometric viewpoint as we would like to understand the growth of coefficients of the Hilbert series of $J_\infty(R)$.

Graph with cycles are more complicated to analyze. However, for several examples related to $5$-cycles we obtain elegant formulas.
\begin{theorem} \label{pentagons} Let $C_5$ be a $5$-cycle graph and $\Gamma_8$ the graph in Figure 1. Then
\begin{align}\label{HC5}
H_{C_5}(q)&=\frac{q^{-1}}{(q)_\infty} \sum_{n \geq 1} \frac{nq^n}{1-q^n}, \\ 
\label{g8}
H_{\Gamma_8}(q)&=\frac{q^{-1}}{(q)^2_\infty} \sum_{n \geq 1} \frac{n^2 q^n}{1-q^n}.
\end{align}
\end{theorem}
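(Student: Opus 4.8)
The plan is to compute $H_{C_5}(q)$ and $H_{\Gamma_8}(q)$ by exploiting the combinatorial structure of the defining sum and recursively summing over one vertex at a time. For a graph $\Gamma$ on vertices $\{1,\dots,r\}$, the adjacency form $\frac12 \bm{n} C_\Gamma \bm{n}^T$ equals $\sum_{\{i,j\}\in E(\Gamma)} n_i n_j$, so the summand of \eqref{hg} is $\prod_i \frac{q^{n_i}}{(q)_{n_i}}$ weighted by $\prod_{\{i,j\}\in E} q^{n_i n_j}$. For the $5$-cycle, label the vertices cyclically; I would first fix $n_1$ and sum over the two neighbors that are \emph{not} adjacent to each other in the remaining path, then collapse the resulting path sum. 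The key tool is the elementary identity $\sum_{n\geq 0}\frac{(xq)^n}{(q)_n}=\frac{1}{(xq)_\infty}$ (the $q$-exponential / $q$-binomial theorem), which lets one sum over a leaf vertex, and its finite truncations $\sum_{n=0}^{N}\frac{q^{\binom{n}{2}}}{(q)_n (q)_{N-n}}x^n$-type identities for internal vertices; these are exactly the manipulations used in \cite{JM} for $A$-type graphs, and $C_5$ should reduce to an $A$-type computation after one extra summation is performed and a geometric-type series in the "cycle-closing" variable is resummed.

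Concretely, for $C_5$ I expect the first summation (over the vertex opposite a chosen edge, say) to produce a factor that telescopes the path $A_3$ or $A_4$ piece, leaving a single sum of the shape $\sum_{n\geq 1} c_n \frac{q^{an}}{1-q^n}$; matching constants should give $\frac{q^{-1}}{(q)_\infty}\sum_{n\geq 1}\frac{nq^n}{1-q^n}$. The appearance of the numerator $n$ (and $n^2$ for $\Gamma_8$) is the signature of differentiating $\frac{1}{1-q^n}$ or, equivalently, of a sum $\sum_{m\geq 1}\sum_{n\geq 1}q^{mn}$ with a linear weight coming from counting the number of ways a vertex value can be distributed; so an alternative route is to write $\frac{nq^n}{1-q^n}=\sum_{m\geq 1}n q^{mn}$ and interpret the right-hand side of \eqref{HC5} directly as a lattice sum, then show the $C_5$ multisum collapses to it. For $\Gamma_8$ (which, from the named Figure 1, I take to be $C_5$ with some pendant/extra structure contributing the extra $(q)_\infty^{-1}$ and upgrading $n$ to $n^2$), the same strategy applies: one additional leaf summation contributes the extra $(q)_\infty^{-1}$ via $\frac{1}{(q)_\infty}$, and the extra internal adjacency bumps the linear weight $n$ to a quadratic one $n^2$ through a convolution $\sum_{j+k=n} 1 \rightsquigarrow$ something producing $n$, composed once more.

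The main obstacle I anticipate is the \emph{order} and \emph{bookkeeping} of the summations for $C_5$: unlike a path, a cycle has no leaf, so no single summation decouples cleanly, and one must either introduce an auxiliary variable to "cut" the cycle (summing the path and then re-closing, which forces a nontrivial resummation of a bilateral-looking series) or find the one clever partial summation that directly linearizes the exponent in the remaining variables. Getting the truncated $q$-series identity exactly right — with all boundary terms at $n_i=0$ and the correct powers of $q$ — is where sign errors and off-by-$q^{-1}$ discrepancies will creep in; I would check the final formula against the first several $q$-coefficients by direct expansion of the multisum as a sanity check. For $\Gamma_8$ the structural argument should be a routine elaboration once the $C_5$ case is done, modulo correctly reading off the graph from Figure 1.
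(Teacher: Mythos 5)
Your proposal is a plan rather than a proof: nothing is actually computed, and the two places where the real work lies are exactly the places you defer with ``should give'' and ``routine elaboration.'' For $C_5$, the absence of a leaf is not the obstacle you anticipate; Euler's identity applies to any vertex, since $\sum_{n_1\ge0}\frac{q^{n_1(n_2+n_5+1)}}{(q)_{n_1}}=\frac{1}{\left(q^{n_2+n_5+1}\right)_\infty}$ regardless of degree. The genuine difficulty is what comes next: after summing over $n_1$ one is left with a four-variable sum in which $n_3$ and $n_4$ are still coupled by $q^{n_3n_4}$ and carry shifted linear exponents $(n_2+1)n_3$ and $(n_5+1)n_4$. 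The paper's key ingredient, which your outline never produces, is the closed-form evaluation
\[
\sum_{n_1,n_2\ge0}\frac{q^{n_1n_2+(a+1)n_1+(b+1)n_2}}{(q)_{n_1}(q)_{n_2}}=\frac{1}{\left(q^{b+1}\right)_{a+1}\left(q^{a+1}\right)_\infty},
\]
a consequence of Fine's identity $\sum_{n\ge0}\frac{(sq)_n}{(q)_n}t^n=\frac{(stq)_\infty}{(t)_\infty}$. The finite Pochhammer $\left(q^{b+1}\right)_{a+1}$ is what makes everything telescope against the Pochhammer ratio produced by the $n_1$-summation, leaving $\frac{1}{(q)_\infty^2}\sum_{n_2,n_5\ge0}\frac{q^{n_2+n_5}}{1-q^{n_2+n_5+1}}$; only at that point does your (correct) observation that the weight $n$ counts the decompositions $n_2+n_5=k$ finish the job. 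Without this lemma the ``collapse'' you describe does not happen, and ``matching constants'' is not an argument.

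For $\Gamma_8$ the gap is larger. The graph is not $C_5$ with pendant structure; it is three $5$-cycles glued along edges, with no leaf at all, so the same framing lemma must be applied inside a longer chain of reductions. More importantly, the procedure terminates at
\[
\sum_{n_1,n_2,n_3\ge0}\frac{q^{n_1+n_2+n_3+1}}{\left(1-q^{n_1+n_2+1}\right)\left(1-q^{n_1+n_2+n_3+1}\right)}=\sum_{n\ge1}\frac{n^2q^n}{1-q^n},
\]
and establishing this Bell-type identity for the sum of squares of divisors is a genuinely nontrivial step: the paper proves it by reorganizing the left-hand side into $\sum_{k,\ell\ge1}\frac{\min(k,\ell)q^{k+\ell}}{(1-q^k)(1-q^\ell)}$ plus correction terms and invoking Andrews' stacked-lattice-boxes evaluations. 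Your heuristic that an extra adjacency ``upgrades $n$ to $n^2$ through a convolution'' points in the right direction but is not a proof, and treating this case as a routine elaboration of $C_5$ is where an executed version of your plan would stall.
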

 
 For several examples of graphs of $D$ and $E$-type, due to a trivalent node, we get graph series with somewhat different combinatorial and modular properties. For this recall that \textit{mixed mock modular forms} are linear combinations of modular forms multiplied by mock modular forms (holomorphic parts of certain non-holomorphic automorphic objects). Here is our main result in this direction.
\begin{theorem}\label{thm:1.3} We have
\begin{align}\label{HD4}
H_{D_4}(q)&=\frac{1}{(q)_\infty^4} \sum_{n,m \geq 0} (-1)^{n+m} (2n+1) q^{\frac{n^2}{2}+\frac{3m^2}{2}+2nm+ \frac{3n}{2}+\frac{5m}{2}}, \\ \label{HD5.2}
H_{D_5}(q)&=\frac{1}{(q)^5_\infty} \lp \sum_{n,m \geq 0} - \sum_{n,m < 0} \rp (-1)^{n+1} (n+1)^2 q^{\frac{n^2+3n}{2} +3nm+3m^2+4m}, \\ \label{HE6}
H_{E_6}(q)&=\frac{q^{-1}}{(q)^2_\infty} \sum_{n \geq 1} \frac{nq^n}{1-q^n}.
\end{align}
All three series are mixed mock modular forms. 
\end{theorem}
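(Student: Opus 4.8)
The plan is to collapse the multi-sums \eqref{hg} for these three graphs to one-dimensional $q$-series and then to identify those with the stated theta-type expressions. For a tree $\Gamma$, writing $\tfrac12\bm n C_\Gamma\bm n^T$ as a sum over edges, one sums out the variables sitting at pendant (degree one) vertices using Euler's identity $\sum_{n\ge0}z^{n}/(q)_{n}=1/(z;q)_\infty$: if $v$ is a leaf adjacent to $u$ then $\sum_{n_v\ge0}q^{n_un_v+n_v}/(q)_{n_v}=(q^{n_u+1};q)_\infty^{-1}=(q)_{n_u}/(q)_\infty$, and the resulting factor $(q)_{n_u}$ either collapses against $1/(q)_{n_u}$ (making $u$ a new denominator-free leaf, to be summed by a geometric series $\sum_{n_u\ge0}q^{n_wn_u+n_u}=1/(1-q^{n_w+1})$) or, at the trivalent node of $D_4$, accumulates. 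Iterating this reduction on $D_4$ (the star $K_{1,3}$), on $D_5$ and on $E_6$ leaves a single summation index, and I expect to reach
\[
H_{D_4}(q)=\frac{1}{(q)_\infty^{3}}\sum_{n\ge0}q^{n}(q)_n^{2},\qquad H_{D_5}(q)=\frac{1}{(q)_\infty^{3}}\sum_{n\ge0}\frac{q^{n}(q)_n}{1-q^{n+1}},
\]
while for $E_6$ one further uses $\sum_{n\ge0}q^{n}/(1-q^{n+1})^{2}=q^{-1}\sum_{N\ge1}Nq^{N}/(1-q^{N})$ (both sides equal $q^{-1}\sum_{m\ge1}\sigma_1(m)q^m$) to obtain \eqref{HE6}. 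Every manipulation here is an identity in $q^{-1}\C[[q]]$, so convergence is never at issue.

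The heart of the proof is then to establish
\[
(q)_\infty\sum_{n\ge0}q^{n}(q)_n^{2}=\sum_{n,m\ge0}(-1)^{n+m}(2n+1)\,q^{\frac12 n^{2}+2nm+\frac32 m^{2}+\frac32 n+\frac52 m}
\]
and the analogous identity for $D_5$. Their right-hand sides are indefinite theta functions for the rank two lattices with Gram matrices $\pmat{1 & 2\\ 2 & 3}$ and $\pmat{1 & 3\\ 3 & 6}$, both of signature $(1,1)$, weighted by $2n+1$ respectively $(n+1)^{2}$ (the $D_5$ series being the sum over $\{n,m\ge0\}$ with the sum over $\{n,m<0\}$ subtracted). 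To prove them I would expand $(q)_n^{2}$ (for $D_4$), respectively $(q)_n$ (for $D_5$), by the finite $q$-binomial theorem $(q;q)_n=\sum_{j\ge0}(-1)^{j}q^{\binom{j+1}{2}}\left[\begin{smallmatrix}n\\j\end{smallmatrix}\right]_q$, interchange the order of summation, and evaluate the inner sum over $n$ by the Jacobi triple product: the weights $2n+1$ and $(n+1)^{2}$ are precisely those produced by a first, respectively second, $\partial_z$-derivative of $\sum_n(-1)^{n}z^{n}q^{\binom n2}$ evaluated at $z=1$ (as already visible in Jacobi's identity $\sum_{n\ge0}(-1)^{n}(2n+1)q^{\binom{n+1}{2}}=(q)_\infty^{3}$). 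Alternatively, one may package $\sum q^{n}(q)_n^{2}$ and $\sum q^{n}(q)_n/(1-q^{n+1})$ as iterated Bailey pairs and run them along the Bailey chain, in the spirit of the $A$-type computations of \cite{JM}.

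Modularity then follows from the usual dictionary. The prefactors $1/(q)_\infty^{k}=q^{k/24}\eta(\tau)^{-k}$ (with $q=e^{2\pi i\tau}$) are modular of weight $-k/2$. For \eqref{HE6}, $\sum_{m\ge1}mq^m/(1-q^m)=\tfrac1{24}\bigl(1-E_2(\tau)\bigr)$ is quasimodular with modular completion $E_2(\tau)-3/(\pi\,\mathrm{Im}\,\tau)$, so $H_{E_6}$ is a modular form times a completed quasimodular form, hence a mixed mock modular form in the sense of Subsection~\ref{modularity}. For \eqref{HD4} and \eqref{HD5.2}, indefinite theta functions of signature $(1,1)$, together with their $\partial_z$-derivatives, are mock modular by Zwegers' theory, their non-holomorphic completions being given by explicit error-function terms; multiplying by the modular $\eta^{-4}$, respectively $\eta^{-5}$, then yields mixed mock modular forms.

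The main obstacle is the second step: carrying out the explicit transformation of the one-dimensional series $\sum q^{n}(q)_n^{2}$ and $\sum q^{n}(q)_n/(1-q^{n+1})$ into the prescribed two-dimensional Hecke-type sums. One must control how the expanded $q$-binomials recombine into a genuine lattice sum, track the linear characteristics $\tfrac32 n,\tfrac52 m$ (and the analogous shift for $D_5$) so that the inner sums fall exactly onto Jacobi's evaluation, and---in order to reach the \emph{mock} (as opposed to merely false or partial) modular statement---check that the region of summation is precisely $\{n,m\ge0\}$, respectively the difference of $\{n,m\ge0\}$ and $\{n,m<0\}$, with the signs as written, so that the object really is a Zwegers indefinite theta function. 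Once these two identities are secured, exhibiting the non-holomorphic completions and the $E_2$-completion so as to make the phrase ``mixed mock modular'' rigorous is routine.
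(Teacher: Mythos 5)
Your first reduction is exactly the paper's: collapsing the leaves of $D_4$, $D_5$, $E_6$ via Euler's identity does yield $H_{D_4}=\frac{1}{(q)_\infty^3}\sum_{n\ge0}q^n(q)_n^2$, $H_{D_5}=\frac{1}{(q)_\infty^3}\sum_{n\ge0}\frac{q^n(q)_n}{1-q^{n+1}}$, and the divisor-sum form of $H_{E_6}$, and the $E_6$ case is then complete. The genuine gap is in your ``heart of the proof.'' Expanding $(q)_n^2$ (resp.\ $(q)_n$) by the finite $q$-binomial theorem and interchanging summation leaves inner sums of the form $\sum_n q^n\left[\begin{smallmatrix}n\\j\end{smallmatrix}\right]_q\left[\begin{smallmatrix}n\\k\end{smallmatrix}\right]_q$, which are \emph{not} Jacobi triple product evaluations (the triple product handles bilateral sums $\sum_n z^nq^{n^2}$, not unilateral sums of products of Gaussian binomials), and the appearance of the weights $2n+1$ and $(n+1)^2$ in Jacobi's identity is not by itself a mechanism for producing the stated Hecke-type double sums over $\{n,m\ge0\}$ minus $\{n,m<0\}$. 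You flag this step yourself as the ``main obstacle,'' but the proposal contains no workable route through it, and this step is where essentially all of the paper's work lies.

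What the paper actually does is different in both cases and worth contrasting. For $D_4$ it imports Andrews' Appell--Lerch expansion of $U(1;q)=\sum_{n\ge0}q^{n+1}(q)_n^2$, combines two of the resulting sums via the Lerch-type identity for $(1-E_2)/24$ (Lemma \ref{AL_E2}), and then converts $-\frac{(1+q^n)(1-q^{n^2})}{(1-q^n)^2}$ into $-\sum_{m=0}^{n-1}(2m+1)q^{nm}-2n\sum_{m\ge n}q^{nm}$ by geometric series, after which the substitution $n\mapsto n+m+1$ produces \eqref{HD4} directly --- no triple product is used. For $D_5$, a Bailey pair of Lovejoy fed into Bailey's Lemma gives the single Hecke--Appell sum of Proposition \ref{prop:5.1}, but the passage to the indefinite theta function \eqref{HD5.2} is \emph{not} combinatorial at all: Theorem \ref{theorem:5.1} is proved by constructing the nonholomorphic completions $\widehat{\mathcal L}$ and $\widehat{\mathcal R}$ of both sides, matching their error-function/incomplete-gamma parts, verifying that both transform identically under $\SL_2(\Z)$, and concluding that the difference is a non-growing weight-two weakly holomorphic modular form, hence zero. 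Your proposal anticipates neither this completion argument nor the fact that the second $z$-derivative of $\Theta_A$ (needed for the weight $(n+1)^2$) requires the explicit $\Gamma(-\frac12,\cdot)$ completions rather than an off-the-shelf appeal to ``Zwegers' theory.'' Finally, the mixed mock modularity of $H_{D_4}$ is cited from the literature on $U(1;q)$ rather than derived from the theta representation, which is closer to your outline but again rests on the identification with $U(1;q)$ rather than on the double sum.
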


Although $q$-series associated to graphs with multiple edges do not directly relate to Hilbert series of jet algebras, they can be viewed as characters of certain principal subspaces (see Section 2). In this paper we obtain several $q$-series identities for graph series of type $B_2$ and $B_3$ (ignoring the orientation) and related ``coset" series (see Proposition \ref{prop:6.1}). For several cases we obtain mixed quantum modular forms. This indicates a possible connection with quantum invariants of $3$-manifolds and knots where similar series appear \cite{Hikami,LZ,Za}. We also investigate several examples of graph series associated to affine Dynkin diagrams whose modular properties seem 
more intricate (see Section 8). 

The paper is organized as follows. In Section 2 we present the main concepts and definitions, along with preliminary results. In particular, we define the notion of the jet scheme  of an affine scheme $X$, briefly discuss principal subspaces associated to lattices and graphs, and present a few results on mock modular forms. In Section 3, we give two new results on the graph series of type $A_7$ and $A_8$ (see Theorem \ref{A7.A8}) as well as study modular properties and the asymptotic behavior of graph series of type $A_n$, $1 \leq n \leq 8$ (see Proposition \ref{seriesA}). In Section 4 we analyze certain graph series coming from $5$-cycles. Section 5 is concerned with the  $D_4$ graph series. The main result here is a new bosonic $q$-representation of the graph series in terms of an indefinite theta function of signature $(1,1)$ (see \eqref{HD4}). Moreover, we show that this graph series is a mixed mock modular form. In Section 6, we perform a similar analysis for the $D_5$ graph,  but this time we use (mixed) mock modular forms to obtain an indefinite theta function representation (see Theorem \ref{theorem:5.1}). 
Section 7 deals with a few examples of graphs with multiple edges of Dynkin type $B_2$ and $B_3$. Finally, in Section 8, we investigate graph series from the $E_6$ Dynkin diagram, $3$-cycles (affine type $A_{2}^{(1)}$), and affine Dynkin graphs of type $D_5^{(1)}$ and $E_6^{(1)}$ .

\section*{Acknowledgements}
The first author has received funding from the European Research Council (ERC) under the European Union’s Horizon 2020 research and innovation programme (grant agreement No. 101001179). 


\section{Preliminaries}

In this part we introduce the main objects of study and present some preliminary results. 

\subsection{Graph schemes and graph series} 

This subsection outlines a construction of the arc space.  
We define the arc space  and the arc algebra (or algebra of infinite jets) of a finitely generated commutative ring $R$.  As usual, let $\mathbb{C}[x_1,x_2, \ldots, x_{\ell}]$ denote the polynomial algebra in $x_j$ $(1 \leq j \leq \ell)$, let $f_{1},f_{2}, \ldots, f_{n}$ be a set of polynomials, and define the quotient algebra \begin{align*}R:=\frac{ \mathbb{C}[x_1,x_2, \ldots, x_{\ell}]}{(  f_{1},f_{2}, \ldots, f_{n})}.
\end{align*}
We now introduce new variables $x_{j,(-1-k)}$ for $k \in \{0,\ldots,m\}$.  We define a derivation $T$ on \[\mathbb{C}[x_{j,(-1-k)}:0\leq k\leq m,\; 1\leq j\leq \ell], \] by letting \begin{equation*}
  T\left(x_{j,(-1-k)}\right) :=
    \begin{cases}
      (-1-k)x_{j,(-k-2)} & \text{for $k\leq m-1$},\\
      0 & \text{for $k=m$}.\\
    \end{cases}
 \end{equation*}
We also identify $x_{j}$ with $x_{j,(-1)}$. Set 
\begin{align*}
R_{m}:=\frac{\mathbb{C}[x_{j,(-1-k)}:0\leq k\leq m,\; 1\leq j\leq \ell]}{( T^{j}f_{k}:1 \leq k \leq n, j\in\mathbb{N}) } ,  
\end{align*}
 the \textit{algebra of $m$-jets} of $R$.
The {\it arc algebra} of $R$ is defined as the direct limit 
\begin{align*} 
J_{\infty}(R)&:=\displaystyle\lim_{\underset{m}{\rightarrow}}R_{m}=\frac{\mathbb{C}[x_{j,(-1-k)}:0\leq k,\; 1\leq j\leq \ell]}{( T^{j}f_{k}:k=1,\ldots n,\; j\in\mathbb{N}) }.
\end{align*}
The scheme $X_\infty=\displaystyle\lim_{\underset{m}{\leftarrow}} X_m$ (another notation is $J_\infty(X)$), where $X_m={\rm Spec}(R_m)$, is called the \textit{ infinite jet scheme}  of $X={\rm Spec}(R)$.  By construction, the arc algebra $J_\infty(R)$ is a differential  commutative algebra.
If $R$ is graded, then $J_\infty(R)$ is also graded, and we can define its {\it Hilbert--(Poincare) series} as: \[HS_{q}(J_{\infty}(R)):=\sum_{m \in  \mathbb{Z}} {\rm dim}\left(J_{\infty}\left(R\right)_{(m)}\right) q^m.\]
We introduce a grading on $R$ by letting ${\rm deg}(x_{j,(-1-k)}):=k+1$.

Let $V$ be a vertex algebra. Provided that $V$ is  strongly finitely generated, we obtain a surjective map from $J_\infty(R_V)$ to ${\rm gr}(V)$, the associated graded algebra of $V$,  where $R_V$ is Zhu's commutative algebra of a vertex algebra $V$. A vertex algebra for which this map is injective is said to be {\em classically free} \cite{ ARE,Arakawa, Li, LM1,LM2}. Although this notion is relatively new, non-trivial examples of vertex algebras with this property appeared earlier in the framework of principal subspaces \cite{CLM,CLM2,FS} and Virasoro minimal models \cite{FF}.
Recall a result announced in \cite[Section 9]{JM} motivated by \cite{MP}, with full details provided in \cite{Li-Thesis, Li}. 
\begin{theorem} Let $\Gamma$ be any simple graph with $r$ nodes and without multiple edges. 
Consider the scheme $X_\Gamma$ defined by the quadratic equation $x_kx_j=0$, if  $k$ and $j$ are adjacent i.e., $(k,j) \in E(\Gamma)$, and 
denote by 
$J_\infty(X_\Gamma)$ the infinite jet algebra of  $X_\Gamma$.
Then the Hilbert series of $J_\infty(X)$, with $\deg(x_j)=1$, is given by 
\begin{equation*}
H_{\Gamma}(q)= \sum_{\bm n \in \N_0^r} \frac{q^{\frac{1}{2} \sum_{(k,j) \in E(\Gamma)} n_k n_j+n_1+\cdots + n_r}} {(q)_{n_1} \cdots (q)_{n_r}}.
\end{equation*}
\end{theorem}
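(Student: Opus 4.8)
The plan is to unwind the definition of the arc algebra for the coordinate ring $R_\Gamma:=\C[x_j:j\in V(\Gamma)]/(x_kx_j:\{k,j\}\in E(\Gamma))$, exhibit a monomial basis of $J_\infty(R_\Gamma)$, and then count it. Setting $x_j(z):=\sum_{k\ge 1}x_{j,(-k)}z^{-k}$ (so that the derivation $T$ acts as $\tfrac{d}{dz}$ and $x_j=x_{j,(-1)}$), the relations $T^m(x_kx_j)$ run, up to nonzero scalars, exactly over the coefficients of $x_k(z)x_j(z)$, so the construction of Section~2 gives
\[
J_\infty(R_\Gamma)\;\cong\;\frac{\C[x_{j,(-k)}:j\in V(\Gamma),\,k\ge 1]}{I_\Gamma},\qquad
I_\Gamma=\Bigl([z^{-M}]\bigl(x_k(z)x_j(z)\bigr):\{k,j\}\in E(\Gamma),\ M\ge 2\Bigr).
\]
With the grading $\deg x_{j,(-k)}:=k$ of Section~2 every defining relation is homogeneous, so the grading descends. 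When $\Gamma$ has no edges this is the free polynomial algebra, with Hilbert series $(q)_\infty^{-r}=\sum_{\bm n\in\N_0^r}q^{n_1+\cdots+n_r}/\bigl((q)_{n_1}\cdots(q)_{n_r}\bigr)$ by Euler's identity, which is the claimed formula in that case; the whole point is to track how the edge relations modify this.

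Next I would fix a monomial order — a graded order refined lexicographically, with the node variables grouped so that $x_{k,(-1)}$ is the largest variable on each edge — and describe the standard (non-leading) monomials. The relation $[z^{-M}]\bigl(x_k(z)x_j(z)\bigr)=\sum_{a+b=M,\,a,b\ge 1}x_{k,(-a)}x_{j,(-b)}$ has leading term $x_{k,(-1)}x_{j,(-(M-1))}$, and multiplying the relations by variables and reducing produces further leading monomials (e.g. $x_{k,(-2)}x_{j,(-1)}^2$, $\dots$); the claim is that $\mathrm{in}(I_\Gamma)$ is an explicit monomial ideal whose combinatorics is imposed edge by edge, essentially forbidding too much ``overlap of the two colours at low levels.'' The spanning half — every monomial reduces to a $\C$-linear combination of standard ones — is then immediate from the division algorithm. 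The hard part will be the reverse inequality: that the standard monomials are linearly \emph{independent} in the quotient, equivalently that the listed relations together with their evident consequences already form a Gröbner basis. I expect this to be the main obstacle, since $I_\Gamma$ is not finitely generated (so Buchberger's test must be organized using the $T$-orbit structure of the relations and the product structure over edges) — and it is precisely here that the vertex-algebra technology of the cited works does the work, realizing $J_\infty(R_\Gamma)$ as the associated graded of a principal subspace and using projections and intertwining operators to detect each standard monomial separately \cite{MP, Li, Li-Thesis} (compare \cite{CLM, FS}).

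Granting the basis, the final step is the combinatorial count. Fix a content $\bm n=(n_1,\dots,n_r)$, meaning $n_j$ node-$j$ variables appear. The key quantitative input is that the minimal standard monomial of content $\bm n$ has degree exactly $n_1+\cdots+n_r+\sum_{\{k,j\}\in E(\Gamma)}n_kn_j$: each node-$j$ block must be pushed up past level $1$, and past one extra level for each unit of each neighbour's block it would otherwise overlap, and the total of these forced shifts is the cross term $\sum_{\{k,j\}\in E(\Gamma)}n_kn_j$ — one checks this on a single edge (where the minimal monomial of content $(n_1,n_2)$ is forced, e.g. $x_{1,(-2)}x_{2,(-1)}$ for $(1,1)$), and the general case follows because the constraints are edge-local. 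Removing this minimal configuration, the remaining freedom is to add to each block an arbitrary partition with at most $n_j$ parts, contributing $\prod_j (q)_{n_j}^{-1}$. Hence the generating function of standard monomials of content $\bm n$ is $q^{\,n_1+\cdots+n_r+\sum_{\{k,j\}\in E(\Gamma)}n_kn_j}\prod_j(q)_{n_j}^{-1}$, and summing over $\bm n\in\N_0^r$ yields $H_\Gamma(q)$. (An alternative to the middle step, closer to the principal-subspace literature, is to pick a leaf $v$ of $\Gamma$ adjacent to $u$, set up the short exact sequence relating $J_\infty(R_\Gamma)$ to $J_\infty(R_{\Gamma\setminus v})$ through multiplication by $x_{v,(-1)}$, read off the induced Hilbert-series recursion, and check the closed formula solves it — though exactness of that sequence again requires the same linear-independence input.)
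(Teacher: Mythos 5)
The paper does not prove this theorem at all: it is explicitly \emph{recalled} from \cite{JM} (Section 9), with full details attributed to \cite{Li-Thesis, Li} and the related character computations to \cite{MP, Penn}. So there is no internal proof to compare your attempt against, and your overall architecture --- spanning by standard monomials via the division algorithm, linear independence via the principal-subspace realization, then a combinatorial count of standard monomials by content --- is indeed the strategy of those cited sources. To that extent the proposal is on the right track and correctly locates where the difficulty lives.

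However, as a proof it has two genuine gaps, one of which you acknowledge. First, the linear independence of the standard monomials (equivalently, that the differential ideal generated by the $T^m(x_kx_j)$ has the claimed initial ideal) is not an auxiliary technicality to be ``granted'': it is the entire mathematical content of the theorem. The spanning half only gives $\dim J_\infty(R_\Gamma)_{(m)} \le$ (number of standard monomials), and without the reverse inequality the Hilbert series could be strictly smaller than $H_\Gamma(q)$. Deferring this to ``the vertex-algebra technology of the cited works'' reproduces the paper's citation rather than supplying a proof. Second, even granting a basis of standard monomials, your count rests on two unproved assertions: that the minimal standard monomial of content $\bm n$ has degree exactly $n_1+\cdots+n_r+\sum_{\{k,j\}\in E(\Gamma)}n_kn_j$, and that ``the general case follows because the constraints are edge-local.'' Edge-locality of the initial ideal is precisely what fails to be obvious once a node has several neighbours (the $D$- and $E$-type examples in this paper exist because branch nodes change the combinatorics): the S-polynomials of relations coming from two edges sharing a vertex can a priori produce leading monomials supported on three nodes, and one must rule this out. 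Likewise the identification of the standard monomials of content $\bm n$ with ``minimal configuration plus an arbitrary partition of at most $n_j$ parts per node,'' which produces the factor $\prod_j (q)_{n_j}^{-1}$, requires the precise description of the initial ideal that you have not established. Your suggested alternative via leaf-removal exact sequences is closer to how \cite{CLM, CLM2} argue, but as you note it needs the same independence input to get exactness.
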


Another important result in this context is the following theorem (see \cite{Li} and also \cite{MP,Penn}).
\begin{theorem} \label{principal}
For any $\Gamma$ (not necessarily simple), the $q$-series $H_{\Gamma}(q)$ computes the character of the principal subspace $W_\Gamma$ of a lattice vertex algebra associated to the adjacency 
matrix of $\Gamma$, equipped with a certain grading.
\end{theorem}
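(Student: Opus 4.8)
The plan is to realize $W_\Gamma$ explicitly inside a lattice vertex algebra and to compute its graded character by means of a quasi-particle monomial basis. First I would fix a free abelian group $L = \bigoplus_{i=1}^r \mathbb{Z}\alpha_i$ and endow it with the symmetric bilinear form determined by $\langle \alpha_i, \alpha_j\rangle = (C_\Gamma)_{ij}$ for $i \neq j$ and $\langle \alpha_i, \alpha_i\rangle = 0$; since this form is degenerate along the diagonal, I would realize the $\alpha_i$ as isotropic vectors inside a nondegenerate even lattice (adjoining hyperbolic planes or auxiliary rank-one directions), so that the lattice vertex algebra $V_L$ and the vertex operators $Y(e^{\alpha_i}, x) = \sum_{m} x_{\alpha_i}(m)x^{-m-1}$ are defined in the usual way. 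The principal subspace is then $W_\Gamma := \mathrm{span}\{x_{\alpha_{i_1}}(m_1)\cdots x_{\alpha_{i_s}}(m_s)\mathbf{1}\}$, the smallest subspace containing the vacuum and stable under all modes $x_{\alpha_i}(m)$. The relevant grading (``a certain grading'' in the statement) is the sum of the conformal weight and the charge $\bm n = (n_1, \dots, n_r)$ recording how many factors of each color $\alpha_i$ occur; this is exactly the jet-scheme grading $\deg(x_{j,(-1-k)}) = k+1$, which shifts the conformal weight of a charge-$\bm n$ vector upward by $n_1 + \cdots + n_r$.

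Next I would extract the defining relations among the generating modes from the lattice vertex algebra structure. Because $\langle \alpha_i, \alpha_i\rangle = 0$, the fields $Y(e^{\alpha_i}, x)$ are mutually local of order zero with themselves, so for a fixed color the modes $x_{\alpha_i}(m)$ commute freely and no self-difference condition is imposed. For two distinct colors the operator product $Y(e^{\alpha_i}, x)Y(e^{\alpha_j}, w)$ carries a zero of order $\langle \alpha_i, \alpha_j\rangle = (C_\Gamma)_{ij}$ along $x = w$, which forces difference conditions between the $\alpha_i$- and $\alpha_j$-modes. Using these relations together with the commutator formula, I would reduce an arbitrary product of modes to a linear combination of ordered quasi-particle monomials satisfying the inter-color difference conditions, thereby producing a spanning set for $W_\Gamma$.

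I would then count the graded dimension of this spanning set one charge sector at a time. For fixed $\bm n$, the order-$(C_\Gamma)_{ij}$ vanishing between each pair of colors forces a minimal total degree of $\sum_{i<j}(C_\Gamma)_{ij}n_i n_j = \tfrac12 \bm n C_\Gamma \bm n^T$, the charge shift contributes one unit per particle giving $n_1 + \cdots + n_r$, and the remaining freedom in raising the (freely commuting within each color) modes contributes a factor $\prod_i (q)_{n_i}^{-1}$. Summing the resulting $\tfrac{q^{\frac12 \bm n C_\Gamma \bm n^T + n_1 + \cdots + n_r}}{(q)_{n_1}\cdots (q)_{n_r}}$ over all $\bm n \in \mathbb{N}_0^r$ reproduces $H_\Gamma(q)$, establishing $\mathrm{ch}(W_\Gamma) \leq H_\Gamma(q)$ coefficientwise.

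The main obstacle is the matching lower bound, i.e., the linear independence of the quasi-particle monomials, which upgrades the spanning set to a basis and the inequality to an equality. I would prove this with the standard intertwining-operator technique: composing the generating fields with intertwining operators among $V_L$-modules and evaluating against lowest-weight vectors isolates the leading coefficient of each admissible monomial, so that a nontrivial linear relation would force a contradiction on these leading terms --- this is precisely the argument carried out in \cite{Li, Li-Thesis, MP, Penn}. For the special case of a simple graph one may instead bypass the independence count by identifying Zhu's commutative algebra $R_{W_\Gamma}$ with $R_\Gamma = \mathbb{C}[x_1, \dots, x_r]/(x_ix_j : (i,j)\in E(\Gamma))$ (the relation $x_ix_j = 0$ coming from $x_{\alpha_i}(-1)e^{\alpha_j} = 0$ when $\langle \alpha_i, \alpha_j\rangle \geq 1$), observing that $W_\Gamma$ is classically free, and invoking the preceding theorem so that $\mathrm{ch}(W_\Gamma) = HS_q(J_\infty(R_\Gamma)) = H_\Gamma(q)$; but for graphs with multiple edges only the quasi-particle route is available, since the edge multiplicities enter solely through the Gram pairings $\langle \alpha_i, \alpha_j\rangle$ and not through the ideal defining $R_\Gamma$.
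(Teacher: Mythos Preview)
The paper does not prove Theorem \ref{principal} in the text; it is stated with a reference to \cite{Li} (see also \cite{MP,Penn}), and your outline is precisely the argument carried out in those sources --- embed the adjacency form into a nondegenerate even lattice, define $W_\Gamma$ via the modes of the $e^{\alpha_i}$, read off the quasi-particle difference conditions from the orders $\langle\alpha_i,\alpha_j\rangle=(C_\Gamma)_{ij}$, obtain the spanning set whose count gives $H_\Gamma(q)$, and prove linear independence with intertwining operators. Your proposal therefore coincides with the proof the paper defers to (and you already invoke those same references for the independence step).
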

We also note that the previous theorem can be used, with slight modifications, for graphs with loops. In that case we need to adjust the linear term of the $q$-exponent of $H_{\Gamma}(q)$ (see \cite{LM1} for some examples). We now give some modular examples.

\begin{examples*}
\phantom{abc}

\noindent (1) (Affine space $\mathbb{A}^\ell$) For $R=\mathbb{C}[x_1,...,x_\ell]$, we have$$
J_\infty(R)=\mathbb{C} \left[x_{1,(-1)},...,x_{1,(-n)},...,x_{\ell,(-1)},...,x_{\ell,(-n)},...\right].
$$
Since ${\rm deg}(x_{r,(-j)})=j$, we immediately get 
$$HS_q(J_\infty(R))=\frac{1}{(q)_\infty^\ell}.$$

\noindent (2) (Union of two lines)
 Let $R=\mathbb{C}[x,y]/(xy)$.
Then by \cite{Arakawa, M, JM} we have 
$$HS_q(J_\infty(R))=\sum_{n_1,n_2 \geq 0} \frac{q^{n_1 n_2 +n_1+n_2}}{(q)_{n_1} (q)_{n_2}}=\frac{1}{(1-q)(q)_\infty}.$$

\noindent (3) (``Fat" point) Let $R=\mathbb{C}[x]/(x^2)$.
Then by \cite{Gorsky, M, CLM, FS} we have
$$HS_q(J_\infty(R))=\sum_{n \geq 0} \frac{q^{n^2}}{(q)_n}=\frac{1}{(q;q^5)_\infty(q^4;q^5)_\infty},$$
where the last equality is the first Rogers--Ramanujan identity.
\end{examples*}

\subsection{Modularity results}\label{modularity}
In this subsection we record some modularity results required for this paper. We start with the transformation laws of the \textit{Dedekind $\eta$-function} $\eta(\tau):=q^\frac{1}{24} (q)_\infty :$
$$
 \eta(\tau+1)=e^\frac{\pi i}{12} \eta(\tau), \quad \eta\left(-\frac 1\tau\right)= \sqrt{-i \tau} \eta(\tau).
$$
We define the {\it weight two Eisenstein series} $(q:=e^{2\pi i \tau})$
\begin{equation}\label{E2.1}
E_2(\t) := 1-24 \sum_{n\ge1} \sum_{d|n} d q^n.
\end{equation}
This function is not quite a modular form, but transforms with an additional term under inversion. To be more precise, we have
\begin{equation}\label{E2}
	E_2\left(-\frac 1\t\right) = \t^2 E_2(\t) + \frac{6\t}{\pi i}.
\end{equation}
We also require the following representation of $E_2$ as a Lerch-type sum.
Although the next result is probably known, we include a proof for the sake of completeness.
\begin{lemma}\label{AL_E2}
	We have 
	$$
	\frac{1-E_2(\t)}{24} = \sum_{n \geq 1} \frac{q^n}{(1-q^n)^2}=\sum_{n \geq 1}
	\frac{(-1)^{n+1} (1+q^n) q^{\frac{n(n+1)}{2}} }{(1-q^n)^2}.
	$$
\end{lemma}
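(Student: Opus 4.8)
The plan is to establish the two claimed identities separately, each by a standard Lambert-series manipulation, and then read off the connection to $E_2$ from \eqref{E2.1}. First I would prove the left-hand equality $\frac{1-E_2(\tau)}{24} = \sum_{n\ge1}\frac{q^n}{(1-q^n)^2}$. Starting from the definition \eqref{E2.1}, we have $\frac{1-E_2(\tau)}{24} = \sum_{n\ge1}\sum_{d\mid n} d\,q^n = \sum_{d\ge1} d\sum_{m\ge1} q^{dm} = \sum_{d\ge1}\frac{d\,q^d}{1-q^d}$; then interchanging the roles of the summation variables (or equivalently, grouping by the cofactor) and using $\sum_{d\ge1} d\,x^d = \frac{x}{(1-x)^2}$ with $x=q^n$ gives $\sum_{n\ge1}\frac{q^n}{(1-q^n)^2}$. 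Concretely, $\sum_{n\ge1}\frac{q^n}{(1-q^n)^2} = \sum_{n\ge1}\sum_{k\ge1} k\,q^{nk} = \sum_{N\ge1}\Bigl(\sum_{k\mid N} k\Bigr) q^N = \sum_{N\ge1}\sigma_1(N)q^N$, which matches. This part is routine and poses no real obstacle.

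The second equality, $\sum_{n\ge1}\frac{q^n}{(1-q^n)^2} = \sum_{n\ge1}\frac{(-1)^{n+1}(1+q^n)q^{n(n+1)/2}}{(1-q^n)^2}$, is the crux. The approach I would take is to expand each term on the right as a Lambert-type series in the same way: $\frac{(1+q^n)q^{n(n+1)/2}}{(1-q^n)^2} = (1+q^n)\sum_{k\ge1} k\,q^{n(n+1)/2 + n(k-1)} = \sum_{k\ge1} k\bigl(q^{n(n-1)/2+nk} + q^{n(n+1)/2+nk}\bigr)$, so that the right-hand side becomes a double sum $\sum_{n\ge1}\sum_{k\ge1}(-1)^{n+1}k\,\bigl(q^{\binom{n}{2}+nk}+q^{\binom{n+1}{2}+nk}\bigr)$. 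One then collects the coefficient of $q^N$ and must show it equals $\sigma_1(N)$. Alternatively — and this is the route I would actually push — one can recognize the right-hand side as coming from logarithmic differentiation of a theta-type product: the series $\sum_{n\ge1}\frac{(-1)^{n+1}q^{n(n+1)/2}}{1-q^n}$ and its relatives are classical (they appear in work on $\sigma_1$ and in the Kac--Peterson / Zagier circle of identities), and differentiating the identity $\sum_{n\ge1}\frac{(-1)^{n+1}q^{n(n+1)/2}}{1-q^n}=\sum_{n\ge1}\frac{q^n}{1-q^n}=D(q)$ with respect to a suitably inserted parameter produces exactly the $(1+q^n)/(1-q^n)^2$ weighting on the left and the $\sigma_1$ generating function on the right.

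In more detail, I would introduce $f(z):=\sum_{n\ge1}\frac{(-1)^{n+1}z^n q^{n(n+1)/2}}{1-zq^n}$, verify the known identity $f(1)=\sum_{n\ge1}\frac{z^n}{1-zq^n}\big|_{z=1}$-type statement (i.e. that $f(1)=D(q)$), apply the operator $z\frac{d}{dz}$ and evaluate at $z=1$: on the one hand $z\frac{d}{dz}\frac{z^nq^{n(n+1)/2}}{1-zq^n}\big|_{z=1} = \frac{n q^{n(n+1)/2}}{1-q^n} + \frac{n q^{n(n+1)/2}\cdot q^n}{(1-q^n)^2}\cdot$(bookkeeping) which, after simplification, yields the combination $\frac{n(1+q^n)q^{n(n+1)/2}}{(1-q^n)^2}$ up to the sign $(-1)^{n+1}$; on the other hand $z\frac{d}{dz}\frac{z^n}{1-zq^n}\big|_{z=1}=\frac{n q^n}{(1-q^n)^2}\cdot$(similar bookkeeping)$=\frac{n q^n}{(1-q^n)^2}$, and summing gives $\sum_n \frac{nq^n}{(1-q^n)^2}$, which is \emph{not} yet $\sum_n\frac{q^n}{(1-q^n)^2}$ — so the correct differential operator and base identity need to be chosen so that the extra factor of $n$ does not appear. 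The cleanest fix is to instead differentiate an identity of the form $\sum_{n\ge1}(-1)^{n+1}\log(1-zq^n)\,(\cdots)$ or to work directly with the partial-fraction/Lambert-series bijection on coefficients of $q^N$, comparing $\sigma_1(N)$ on both sides. I expect the genuine obstacle to be precisely this bookkeeping: organizing the double sum on the right-hand side so that the signs $(-1)^{n+1}$ telescope and the coefficient of each $q^N$ collapses to $\sigma_1(N)$; once that combinatorial identity is pinned down, the rest follows from \eqref{E2.1}. I would present the coefficient-comparison argument as the primary proof and relegate the differentiation heuristic to a remark.
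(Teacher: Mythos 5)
Your treatment of the first equality is fine and matches the paper's (which simply calls it a rewriting of \eqref{E2.1}). The problem is the second equality, which you correctly identify as the crux but never actually prove. Your primary proposed route --- expand $\frac{1}{(1-q^n)^2}$ as $\sum_{k\ge1}kq^{n(k-1)}$ and compare coefficients of $q^N$ --- reduces the lemma to showing that the alternating sum $\sum(-1)^{n+1}k$, taken over all solutions of $n(2k+n-1)=2N$ and of $n(2k+n+1)=2N$, equals $\sigma_1(N)$. That is not bookkeeping; it is the entire content of the identity, and it requires a genuine involution or generating-function argument that you do not supply (your own phrasing, ``once that combinatorial identity is pinned down,'' concedes this). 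Your secondary route is explicitly acknowledged to fail as written: applying $z\frac{d}{dz}$ to $\sum_{n\ge1}\frac{(-1)^{n+1}z^nq^{n(n+1)/2}}{1-zq^n}$ produces the weight $\frac{nq^n}{(1-q^n)^2}$ rather than $\frac{q^n}{(1-q^n)^2}$, and you do not exhibit a corrected operator or base identity. So the proposal has a gap exactly at the one nontrivial step.

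For comparison, the paper's proof avoids the combinatorics entirely by starting from Lerch's classical partial-fraction expansion
\begin{equation*}
\frac{(q)_\infty^2}{(\zeta)_\infty\left(\zeta^{-1}q\right)_\infty}=\sum_{n\in\Z}\frac{(-1)^nq^{\frac{n(n+1)}{2}}}{1-\zeta q^n},
\end{equation*}
separating off the $n=0$ term and pairing the $n$ and $-n$ terms to produce the factor $(1+q^n)$ and the denominator $(1-\zeta q^n)(1-\zeta^{-1}q^n)$, and then letting $\zeta\to1$; two applications of l'Hospital's rule on the left-hand side yield $\sum_{n\ge1}\frac{q^n}{(1-q^n)^2}$ plus the cancellation of the pole at $\zeta=1$. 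If you want to salvage your write-up, the cleanest repair is to import Lerch's identity (or an equivalent two-variable Appell--Lerch expansion) as the ``known identity'' your sketch gestures at, rather than attempting the coefficient-of-$q^N$ collapse directly.
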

\begin{proof} The first equality is just rewriting \eqref{E2.1}. Using l'Hospital's rule twice the second equality follows from taking the limit $\zeta \to 1$ of the following identity (valid for $0 <|\zeta|<|q|<1$):
	\begin{align} \label{t-identity_1}
		& -\frac{\zeta}{(1-\zeta)^2} +\frac{\zeta (q)_\infty^2}{\lp 1-\zeta \rp (\zeta)_\infty (\zeta^{-1}q)_\infty } =\sum_{n \geq 1} \frac{(-1)^{n+1} (1+q^n) q^{\frac{n(n+1)}{2}}}{(1-\zeta q^{n})(1-\zeta^{-1}q^n)}.
	\end{align}
	Note that \eqref{t-identity_1} is implied by the well-known formula (see for instance \cite{Lerch})
	\begin{equation*}\label{lerch}
		\frac{(q)_\infty^2}{(\zeta )_\infty (\zeta^{-1}q)_\infty}=\sum_{n \in \mathbb{Z}} \frac{(-1)^{n} q^{\frac{n(n+1)}{2}}}{1-\zeta q^n}. \qedhere
	\end{equation*}
\end{proof}

We also require modularity properties of
\begin{align*}
	\mathcal{F}(\tau) := \sum_{n\in\Z \setminus \{0\}} \frac{(-1)^{n+1}q^{\frac{3n(n+1)}{2}}}{\lp 1-q^n\rp^2}.
\end{align*}
For this define ($\tau=u+iv$) the {\it completion of $\calF$} as
\begin{align*}
	\widehat{\mathcal{F}}(\tau) := \mathcal{F}(\tau) - \frac{1}{24} + \frac{E_2(\tau)}{8}
	-\frac{3\eta(\t)}{2\sqrt{\pi}} \sum_{n\in\Z-\frac 16} (-1)^{n+\frac 16} \lvert n\rvert \Gamma\left(-\frac 12, 6\pi n^2v\right) q^{-\frac{3n^2}{2}},
\end{align*}
where the {\it incomplete gamma function} is defined as $\Gamma(\alpha,x):=\int_{x}^{\infty} e^{-t} t^{\alpha-1} dt$
for $x \in \R^+$ and $\a\in\R$.
We then have the following modularity result.
\begin{proposition}\label{Duke}
	We have for $\left(\begin{smallmatrix} a&b\\c&d\end{smallmatrix}\right) \in \SL_2 \lp \Z \rp$,
	\begin{align*}
			\widehat{\mathcal{F}} \lp \frac{a\tau + b}{c \tau +d} \rp = (c\t+d)^2 \widehat{\mathcal{F}}(\t).
	\end{align*}
\end{proposition}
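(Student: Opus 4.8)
The plan is to recognize $\mathcal{F}(\tau)$ as a specialization at $\zeta \to 1$ of a Lerch-type sum whose completion is a known non-holomorphic modular form of weight $2$, following the same strategy used in the proof of Lemma~\ref{AL_E2}. First I would observe that $\mathcal F$ has the same shape as the right-hand side of the Lerch identity \eqref{t-identity_1} but with $q^{n(n+1)/2}$ replaced by $q^{3n(n+1)/2}$; this ``scaling by $3$'' in the exponent is exactly what produces the $3n^2/2$ in the exponents of the non-holomorphic correction term and the weight-$2$ Eisenstein contribution with the specific coefficient $1/8$. Concretely, I would start from the Appell--Lerch sum $\mu(u,v;\tau)$ (or equivalently Zwegers' $A_1$-function) with modular parameter $3\tau$ and elliptic variable tuned so that the holomorphic part, after sending the auxiliary variable to a root of unity and applying l'Hospital twice (as in Lemma~\ref{AL_E2}), reproduces $\mathcal F(\tau) - \tfrac{1}{24} + \tfrac{E_2(\tau)}{8}$. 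The $-\tfrac1{24}+\tfrac{E_2(\tau)}{8}$ piece arises precisely from the ``polar'' term $-\zeta/(1-\zeta)^2$ together with the second term on the left of \eqref{t-identity_1} in the limit $\zeta\to1$, just as $\tfrac{1-E_2}{24}$ appeared in Lemma~\ref{AL_E2}; the different numerical coefficient reflects the substitution $q\mapsto q^3$ in the relevant places.

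Next I would invoke Zwegers' theorem that the completion $\widehat\mu$ (adding the period integral of a unary theta function) transforms as a modular form of weight $\tfrac12$, so that the appropriate rank-two combination transforms with weight $2$. The non-holomorphic completion term $-\tfrac{3\eta(\tau)}{2\sqrt\pi}\sum_{n\in\Z-\frac16}(-1)^{n+\frac16}\lvert n\rvert\,\Gamma(-\tfrac12,6\pi n^2 v)\,q^{-3n^2/2}$ is the standard Eichler-type integral $\int_{-\bar\tau}^{i\infty}\theta(w)(w+\tau)^{-3/2}\,dw$ of a weight-$\tfrac32$ unary theta function supported on the residue class $n\equiv -\tfrac16$, rescaled by the $q\mapsto q^3$ dilation; here the $\eta(\tau)$ prefactor and the shift $-\tfrac16$ are the fingerprints of the index-$3$ theta appearing after the $\zeta\to1$ limit. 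I would verify that the Mordell-integral/theta-decomposition identities of Zwegers give exactly this expression, and that $E_2$'s anomaly \eqref{E2} in $\widehat{\mathcal F}$ is cancelled by the anomaly coming from differentiating the Lerch sum — this is the mechanism that makes the quasimodular $E_2$ term combine with the mock piece into something genuinely modular.

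Finally, with all three pieces ($\mathcal F$, the $E_2$/constant part, and the period integral) assembled into a single manifestly modular object, the transformation $\widehat{\mathcal F}\lp\tfrac{a\tau+b}{c\tau+d}\rp=(c\tau+d)^2\widehat{\mathcal F}(\tau)$ for all $\left(\begin{smallmatrix}a&b\\c&d\end{smallmatrix}\right)\in\SL_2(\Z)$ follows by checking it on the generators $T:\tau\mapsto\tau+1$ and $S:\tau\mapsto-1/\tau$. Invariance under $T$ is immediate from the $q$-expansions (all exponents lie in $\tfrac1?\Z$ with the half-integers in the theta term matching up via $n\in\Z-\tfrac16$, so one checks the periodicity directly). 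The substantive check is under $S$, where one combines the $\eta$-transformation, the $E_2$-anomaly \eqref{E2}, and Zwegers' $S$-transformation for the period integral of the unary theta function; the error terms must cancel in threes. I expect the main obstacle to be precisely this bookkeeping: pinning down the exact normalization (the shift $-\tfrac16$, the sign $(-1)^{n+\frac16}$, the factor $\tfrac3{2\sqrt\pi}$, and the coefficient $\tfrac18$ of $E_2$) so that the anomalies from the $\eta$-prefactor, from $E_2$, and from the incomplete-gamma period integral cancel exactly rather than up to a constant or a lower-weight term. Once the normalization is fixed by matching against Zwegers' $S$-transformation formula for $\widehat\mu$ (with $\tau\mapsto3\tau$ and the elliptic variable at the relevant torsion point), the weight-$2$ modularity is forced.
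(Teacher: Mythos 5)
Your proposal follows essentially the same route as the paper: both realize $\widehat{\mathcal F}$ as the first Taylor coefficient at $z=0$ of a completed Appell--Lerch/Jacobi object built from Zwegers' $R$-function at $(3z+\tau;3\tau)$, with the $-\frac{1}{24}+\frac{E_2(\tau)}{8}$ term arising from the polar part of the $\zeta\to1$ limit and the quasimodular anomaly of $E_2$ absorbed by the correction factor $e^{-\frac{\pi^2}{2}E_2(\tau)z^2}$. The only difference is packaging: where you propose to verify the $S$- and $T$-transformations by hand from Zwegers' formulas, the paper invokes the general result of \cite{35} on Taylor coefficients of non-holomorphic Jacobi forms to obtain the weight-two transformation of $\eta(\tau)r(\tau)$ in one step, and then only needs to compute the non-holomorphic piece $r^-$ explicitly to match the stated completion.
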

\begin{proof}
	Set
	\begin{align*}
		r(\tau):= r^+ (\t) +r^- (\tau)
	\end{align*}
with
	\begin{align*}
	r^+(\tau)&:= \frac{2\pi i }{\eta(\tau)} \lp \mathcal{F}(\t) - \frac{1}{24}+\frac{E_2(\t)}{8} \rp, \\ 
	r^-(\tau)&:= \frac{1}{2\pi i} \left[\frac{\partial}{\partial z} \left(\zeta^{-1} q^{-\frac 16} R(3z+\tau;3\tau) e^{-\frac{\pi^2}{2} E_2(\tau) z^2}\right)\right]_{z=0},
\end{align*}
where ($z=x+iy$, $\zeta:=e^{2\pi iz}$)
\begin{align*}
	R(z;\tau):=\sum_{n\in \Z + \frac 12} (-1)^{n-\frac 12} \lp \sgn(n) -E\lp \lp n+\frac{y}{v}\rp \sqrt{2 v} \rp \rp q^{-\frac{n^2}{2}}e^{-2\pi i n z},
\end{align*}
with $E(x):= 2 \int_{0}^{x}e^{-\pi t^2}dt$.
By using \cite{35} we obtain that for $\left(\begin{smallmatrix} a&b\\c&d\end{smallmatrix}\right) \in \SL_2 \lp \Z \rp$,
\begin{align}\label{etatran}
	\eta \lp \frac{a\t+b}{c\t+d} \rp r \lp \frac{a\t+b}{c\t+d} \rp 
	=(c\tau+d)^2 \eta(\t) r(\t).
\end{align}
We compute
\begin{align*}
	r^-(\tau)   
	&= \frac{1}{2\pi i} q^{-\frac 16} \sum_{n\in\Z+\frac 12} (-1)^{n-\frac 12}  \left[\frac{\partial}{\partial z} \left(\left(\sgn(n) - E\left(\left(n+ \frac 13 +\frac yv  \right) \sqrt{6v}\right)\right) \zeta^{-3n-1}\right)\right]_{z=0} q^{-\frac{3n^2}{2}-n}\\
	& =-\frac{1}{2\pi i} \sum_{n\in\Z-\frac 16} (-1)^{n+\frac 16}  \left[\frac{\partial}{\partial z} \left(\left(\sgn(n) - E\left(\left(n+ \frac yv\right) \sqrt{6v} \right)\right) \zeta^{-3n}\right)\right]_{z=0} q^{-\frac{3n^2}{2}}.
\end{align*}
	We now use the identities
	\begin{align}%
		E'(x)&=2e^{-\pi x^2}, \quad  
		E(x)=\sgn(x) \lp 1- \frac{1}{\sqrt{\pi}} \Gamma\lp \frac 12 , \pi x^2 \rp \rp,\quad
		\label{identity1.1} 	\\
		\Gamma\lp \frac 12 , x \rp &= - \frac 12 \Gamma \lp - \frac 12 , x \rp + x^{- \frac 12} e^{-x}\label{identity1.2} 
	\end{align}
	to obtain that 
	\begin{equation*}
	\left[\frac{\partial}{\partial z} \left(\left(\sgn(n) - E\left(\left(n+ \frac yv\right) \sqrt{6v} \right)\right) \zeta^{-3n}\right)\right]_{z=0} = 3\sqrt{\pi} i \lvert n\rvert \Gamma\left(-\frac 12, 6\pi n^2v\right).
	\end{equation*}
	This gives that
	\begin{align*}
		r^-(\tau) = - \frac{3}{2\sqrt{\pi}} \sum_{n\in\Z - \frac 16} (-1)^{n+\frac 16} |n| \Gamma\lp -\frac 12 , 6\pi n^2v \rp q^{-\frac{3n^2}{2}}.
	\end{align*}
Thus we have that $\widehat{\mathcal{F}}(\tau) =\eta(\tau) r(\tau)$.
The claim then follows from \eqref{etatran}.
\end{proof}

We also require certain indefinite theta functions, considered by Zwegers in his thesis \cite{ZW2}. We let $A$ be a symmetric $r\times r$ matrix with integral coefficients that is non-degenerate, let $Q(\bm{x}) := \frac 12 \bm{x}^TA\bm{x}$ be the corresponding quadratic form, and let $B(\bm{x},\bm{y}) := \bm{x}^TA\bm{y}$ be the associated bilinear form. We assume that $Q$ has signature $(r-1,1)$. Fix $\bm{c_0}\in\R^r$ and let
\[
C_Q := \left\{\bm{c}\in\R^r : Q(\bm{c}) < 0, B(\bm{c},\bm{c_0}) < 0\right\}.
\]
For $\bm{c_1},\bm{c_2}\in C_Q$, set
\[
\varrho(\bm{n}) = \varrho_A^{\bm{c_1},\bm{c_2}}(\bm{n};\t) := E\left(\frac{B(\bm{c_1},\bm{n})}{\sqrt{-Q(\bm{c_1})}} \sqrt{v}\right) - E\left(\frac{B(\bm{c_2},\bm{n})}{\sqrt{-Q(\bm{c_2})}} \sqrt{v}\right).
\]
Then define
\begin{equation*}
	\Theta_A \lp \bm z ;\tau \rp 
	=\Theta_A^{\bm{ c_1}, \bm{ c_2}} \lp \bm z ;\tau \rp 
	:= \sum_{\bm n \in \Z^r}
	\varrho \lp \bm n + \frac{\text{Im}(\bm z)}{v} \rp e^{2\pi i B(\bm n, \bm z)} q^{Q(\bm n)}.
\end{equation*}
We have the following properties (see Proposition 2.7 of  \cite{ZW2}).
\begin{proposition}\label{prop:elliptic}
\end{proposition}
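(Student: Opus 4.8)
I read Proposition \ref{prop:elliptic} as recording the quasi-periodicity of $\Theta_A(\bm{z};\t)$ in the variable $\bm{z}$ --- that $\Theta_A$ transforms like a Jacobi form of matrix index $A$ under $\bm{z}\mapsto\bm{z}+\bm{\lambda}\t+\bm{\mu}$ with $\bm{\lambda},\bm{\mu}\in\Z^r$ --- and possibly also the additivity of $\Theta_A^{\bm{c_1},\bm{c_2}}$ in the cone variable. The latter is immediate from the definitions: $\varrho^{\bm{c_1},\bm{c_2}}(\bm{n};\t)=\varrho^{\bm{c_1},\bm{c_3}}(\bm{n};\t)+\varrho^{\bm{c_3},\bm{c_2}}(\bm{n};\t)$ for any $\bm{c_3}\in C_Q$, since each $\varrho$ is a difference of the same two $E$-terms; summing over $\bm{n}$ gives $\Theta_A^{\bm{c_1},\bm{c_2}}=\Theta_A^{\bm{c_1},\bm{c_3}}+\Theta_A^{\bm{c_3},\bm{c_2}}$. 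So the substance is the elliptic transformation, which I would prove by reducing, via composition, to the two generators $\bm{z}\mapsto\bm{z}+\bm{\mu}$ and $\bm{z}\mapsto\bm{z}+\bm{\lambda}\t$ with $\bm{\lambda},\bm{\mu}\in\Z^r$.

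For the integral shift, $\text{Im}(\bm{z}+\bm{\mu})=\text{Im}(\bm{z})$ leaves the factor $\varrho(\bm{n}+\text{Im}(\bm{z})/v)$ untouched, and $e^{2\pi iB(\bm{n},\bm{z}+\bm{\mu})}=e^{2\pi iB(\bm{n},\bm{z})}$ because $B(\bm{n},\bm{\mu})=\bm{n}^TA\bm{\mu}\in\Z$ by integrality of $A$; hence $\Theta_A(\bm{z}+\bm{\mu};\t)=\Theta_A(\bm{z};\t)$. For the $\t$-shift, $\text{Im}(\bm{z}+\bm{\lambda}\t)/v=\text{Im}(\bm{z})/v+\bm{\lambda}$ turns the $\varrho$-factor into $\varrho\big((\bm{n}+\bm{\lambda})+\text{Im}(\bm{z})/v\big)$, while $e^{2\pi iB(\bm{n},\bm{z}+\bm{\lambda}\t)}q^{Q(\bm{n})}=e^{2\pi iB(\bm{n},\bm{z})}q^{Q(\bm{n})+B(\bm{n},\bm{\lambda})}$. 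Completing the square via $Q(\bm{n})+B(\bm{n},\bm{\lambda})=Q(\bm{n}+\bm{\lambda})-Q(\bm{\lambda})$ and $B(\bm{n},\bm{z})=B(\bm{n}+\bm{\lambda},\bm{z})-B(\bm{\lambda},\bm{z})$, and then shifting the summation variable by $\bm{n}\mapsto\bm{n}-\bm{\lambda}$ (a bijection of $\Z^r$), gives
$$\Theta_A(\bm{z}+\bm{\lambda}\t;\t)=q^{-Q(\bm{\lambda})}\,e^{-2\pi iB(\bm{\lambda},\bm{z})}\,\Theta_A(\bm{z};\t).$$
Concatenating the two generators yields, for $\bm{\lambda},\bm{\mu}\in\Z^r$,
$$\Theta_A(\bm{z}+\bm{\lambda}\t+\bm{\mu};\t)=e^{-2\pi iQ(\bm{\lambda})\t-2\pi iB(\bm{\lambda},\bm{z})}\,\Theta_A(\bm{z};\t),$$
the asserted quasi-periodicity; if the result is stated with $\bm{\mu}\in A^{-1}\Z^r$ the same computation applies, as only $A\bm{\mu}\in\Z^r$ is used.

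The one non-formal point is that these rearrangements --- the term-by-term splitting for additivity and the summation shift $\bm{n}\mapsto\bm{n}-\bm{\lambda}$ --- are legitimate, i.e. that the defining series for $\Theta_A$ converges absolutely and locally uniformly in $(\bm{z},\t)$. This is where the real content of the construction sits, and I would take it as already in place: each $E(\cdot)$ appearing in $\varrho$ is bounded and tends to $\pm1$ exponentially, and the defining condition $\bm{c_1},\bm{c_2}\in C_Q$ is exactly what forces the difference $\varrho(\bm{n}+\cdots)$ to decay faster than any polynomial on the locus $Q(\bm{n})\le0$ where $q^{Q(\bm{n})}$ itself fails to decay; off that locus $q^{Q(\bm{n})}$ supplies the decay while $\varrho$ stays bounded. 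Granting this, the elliptic statement is pure bookkeeping and I anticipate no genuine obstacle --- the serious analytic work (Vign\'eras' differential equation, the theta decomposition) is needed only for the \emph{modular} transformation of the completion of $\Theta_A$, not for the elliptic properties of $\Theta_A$ treated here.
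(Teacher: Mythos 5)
The proposition in the paper is a three-part statement, quoted from Proposition 2.7 of Zwegers' thesis \cite{ZW2}: (1) the modular inversion formula for $\Theta_A\lp\frac{\bm z}{\tau};-\frac1\tau\rp$, (2) the oddness $\Theta_A(-\bm z;\tau)=-\Theta_A(\bm z;\tau)$, and (3) the elliptic transformation under $\bm z\mapsto\bm z+\bm n\tau+\bm m$. The paper offers no proof of its own --- it simply cites Zwegers --- so the comparison here is between your reconstruction and what the cited result actually asserts.

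Your argument for the elliptic transformation (3) is correct and is the standard one: reduce to the two generating shifts, use $A\bm\mu\in\Z^r$ to kill the $\bm\mu$-shift, complete the square and re-index for the $\bm\lambda\tau$-shift, and note that $\varrho$ is carried along consistently because $\mathrm{Im}(\bm z+\bm\lambda\tau)/v=\mathrm{Im}(\bm z)/v+\bm\lambda$. Your signs match the paper's statement, and your remark that absolute convergence (guaranteed by $\bm{c_1},\bm{c_2}\in C_Q$) is the only non-formal input is accurate. The additivity in the cone variable that you also prove is true but is not part of the proposition as stated.

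The genuine gap is that you have proved only one of the three claims. Part (2) is easy (send $\bm n\mapsto-\bm n$ and use that $E$, hence $\varrho$, is odd while $Q(-\bm n)=Q(\bm n)$), but you do not address it. More seriously, part (1) --- the inversion formula expressing $\Theta_A\lp\frac{\bm z}{\tau};-\frac1\tau\rp$ as a finite sum over $\bm\ell\in A^{-1}\Z^r/\Z^r$ with the factor $\frac{i}{\sqrt{-\det(A)}}(-i\tau)^{r/2}$ --- is precisely the ``serious analytic work'' (Poisson summation applied to the completed kernel, i.e.\ the Vign\'eras-type argument) that you explicitly set aside as not needed for ``the elliptic properties treated here.'' But the proposition does assert this modular transformation, and it is in fact the part the paper relies on most heavily in the proof of Theorem \ref{theorem:5.1} (the computation of $\calG(-\frac1\tau)$). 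As a proof of the stated proposition your attempt is therefore incomplete; as a proof of part (3) alone it is fine.
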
 
\vspace{-.25cm}
\it
\begin{enumerate}[leftmargin=*,label=\textnormal{(\arabic*)}]
\item We have 
\begin{align*}\label{invT}
	\Theta_A \lp \frac{\bm z}{\tau}; -\frac{1}{\tau} \rp= \frac{i}{\sqrt{-\det(A)}} (-i \tau)^\frac{r}{2}
	\sum_{\bm \ell \in A^{-1}\Z^r / \Z^r} e^{\frac{2\pi i}{\tau}Q(\bm z + \bm \ell \tau)}
	\Theta_A (\bm z + \bm \ell \tau; \tau).
\end{align*}
\item We have
\begin{equation*}\label{2.4}
	\Theta_A(-\bm{z};\tau) = -\Theta_A(\bm{z};\tau).
\end{equation*}
\item For $\bm n \in \Z^2$, $\bm m \in A^{-1}\Z^2$, we have
\begin{equation*}\label{elliptic}
	\Theta_A (\bm z + \bm n \tau + \bm m ; \tau) = e^{-2\pi i B(\bm n , \bm z)} q^{-Q(\bm n)} \Theta_A (\bm z ; \tau).
\end{equation*}
\end{enumerate}
\rm

We finish this subsection by defining quantum modular forms, following Zagier \cite{ZaQ}.
\begin{definition*}
	A function $f:\mathcal Q\to \C$ (here $\mathcal{Q} \subseteq \mathbb{Q}$) is called a {\it quantum modular form} of weight $k \in \frac12\Z$ and multiplier $\chi$ for a subgroup $\Gamma$ of $\SL_2(\Z)$ and quantum set $\mathcal Q$, if for $M=\left(\begin{smallmatrix}
		a & b\\ c & d
	\end{smallmatrix}\right)\in\Gamma$ the function
	\begin{equation}\label{ob}
		f(\tau)-\chi(M)^{-1}(c\tau+d)^{-k}f(M\tau)
	\end{equation}
	can be extended to an open subset of $\R$ and is real-analytic there.
\end{definition*}

\begin{remark}
	 Zagier \cite{Za20} recently also defined {\it holomorphic quantum modular forms}. These are {\it holomorphic functions} $f:\HH\to\C$, such that \eqref{ob} is holomorphic in a larger domain than $\HH$.
\end{remark}
An example of a holomorphic quantum modular form is the generating function for the number of divisors. We have the following by \cite[Theorem 1]{BC} (see also \cite{NR}).
\begin{lemma}\label{hq}
The function
	\begin{align*}
	D(q):=	\sum_{n \geq 1} \sum_{d|n} q^n = \sum_{n\ge1} \frac{q^n}{1-q^n}
	\end{align*}
is a holomorphic quantum modular form of weight one.
\end{lemma}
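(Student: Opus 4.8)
The plan is to exhibit a non-holomorphic completion $\widehat{D}$ of $D(q)$ that transforms like a weight-one modular form on (a congruence subgroup of) $\SL_2(\Z)$, and to show that the ``error'' term in the modular transformation of $D$ itself, obtained as the difference $D - \widehat D$, extends holomorphically past the real line. This is exactly the strategy used for the classical ``partial'' or ``false'' theta completions: one writes $D(q) = \sum_{n\ge 1} q^n/(1-q^n) = \sum_{n,d\ge 1} q^{nd}$, which is, up to the constant $\tfrac{1}{24}$, essentially $\tfrac{1-E_2(\tau)}{24}$ by Lemma \ref{AL_E2}; but $E_2$ is not modular, and instead one uses the \emph{Eichler integral} / period viewpoint. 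Concretely, $D(q)$ agrees with a holomorphic Eichler integral attached to a weight-three cusp-form-like object (here the relevant object on the ``shadow'' side is built from $\eta$ or from a unary theta series of weight $\tfrac32$), and such Eichler integrals are prototypical holomorphic quantum modular forms.

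Following the approach of \cite{BC}, the key steps are: (i) rewrite $D$ using the Lambert series expansion and relate it, via $q\partial_q$ or via Lemma \ref{AL_E2}, to $E_2$ and hence to the quasimodular world; (ii) form the completion $\widehat E_2(\tau) := E_2(\tau) - \tfrac{3}{\pi v}$ (with $\tau = u+iv$), which satisfies $\widehat E_2\!\left(\tfrac{a\tau+b}{c\tau+d}\right) = (c\tau+d)^2 \widehat E_2(\tau)$ by \eqref{E2}; (iii) integrate: since $D$ is, up to elementary factors, a primitive of an Eisenstein-type series, the non-holomorphic correction needed to make it transform with weight one is a period integral $\int_{-\bar\tau}^{i\infty} \frac{g(w)}{(w+\tau)^{?}}\,dw$ of the relevant shadow $g$, and this integral converges for $\tau$ in a neighborhood of $\R\setminus\{0\}$ (the shadow being a cusp form, or exponentially small, near real points other than $0$), which is precisely the statement that the obstruction to modularity of $D$ extends holomorphically to a domain strictly larger than $\HH$; (iv) invoke the definition in the Remark after Lemma \ref{hq} to conclude that $D$ is a holomorphic quantum modular form of weight one. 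The cocycle/error term $f(\tau) - \chi(M)^{-1}(c\tau+d)^{-1} f(M\tau)$ is then explicitly a finite period integral of the shadow, visibly holomorphic on $\HH \cup (\R \setminus \{0\})$ or an analogous domain.

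The main obstacle is bookkeeping rather than conceptual: one must pin down the precise shadow and the precise multiplier system, and verify that the candidate period integral both (a) makes the completed object transform exactly with weight one and (b) extends holomorphically across the real line. Tracking the constant $\tfrac{1}{24}$ and the factor-of-$24$ discrepancies between $D$, $E_2$, and $\widehat E_2$ requires care, as does checking convergence of the relevant integral uniformly on compact subsets of a neighborhood of a real point $x \ne 0$. Since \cite[Theorem 1]{BC} establishes exactly this, I would in practice simply cite it: $D(q) = \sum_{n\ge 1} q^n/(1-q^n)$ is shown there to be a holomorphic quantum modular form of weight one, with the cocycle computed explicitly as a period integral, and the present lemma is an immediate restatement of that result.
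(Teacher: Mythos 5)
Your proposal ends by citing \cite[Theorem 1]{BC}, which is exactly what the paper does: Lemma \ref{hq} is stated there with no proof beyond the reference to Bettin--Conrey (and \cite{NR}). The preliminary sketch of a completion/Eichler-integral argument is reasonable motivation but is not needed, and in any case the conclusion rests on the same citation as the paper's.
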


\subsection{$q$-series results}

We finish this section by recalling several $q$-series identities needed in the paper. 
We often use Euler's identity 
\begin{equation}\label{109}
	\frac{1}{(\zeta)_\infty}=\sum_{n \geq 0} \frac{\z^n}{(q)_n}.
\end{equation}

We also require Bailey's pairs \cite[Chapter 3]{AndrewsBook}.
Recall that a pair of sequences $(\alpha_n,\beta_n)$ is called a {\it Bailey pair} relative to 
$(a,q)$ if
\begin{align*}
	\beta_n&=\sum_{0\le j\le n}\frac{\alpha_j}{(q)_{n-j}(aq)_{n+j}}.
\end{align*}
Bailey's Lemma is as follows.
\begin{lemma}\cite[Theorem 3.4]{AndrewsBook}\label{L:BaileysLemma}
	If $(\alpha_n,\beta_n)$ is a Bailey pair relative to 
	$(a,q)$, then (assuming convergence conditions) we have 
	\begin{align*}
		\sum_{n\ge0}(\varrho_1)_n(\varrho_2)_n\left(\frac{aq}{\varrho_1\varrho_2}\right)^n \beta_n
		&=
		\frac{ \left( \frac{aq}{\varrho_1}\right)_\infty \left(\frac{aq}{\varrho_2}\right)_\infty}
		{\left( aq\right)_\infty \left( \frac{aq}{\varrho_1\varrho_2}\right)_\infty}
		\sum_{n\ge0}\frac{ (\varrho_1)_n(\varrho_2)_n\left(\frac{aq}{\varrho_1\varrho_2}\right)^n}
		{\left( \frac{aq}{\varrho_1}\right)_n \left( \frac{aq}{\varrho_2}\right)_n }\alpha_n
		.
	\end{align*}
\end{lemma}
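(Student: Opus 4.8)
The plan is the classical one: substitute the defining relation of the Bailey pair into the left-hand side and collapse the resulting double sum using the $q$-analogue of Gauss's summation theorem. First I would write $\beta_n = \sum_{0\le j\le n}\frac{\alpha_j}{(q)_{n-j}(aq)_{n+j}}$ inside $\sum_{n\ge0}(\varrho_1)_n(\varrho_2)_n\left(\frac{aq}{\varrho_1\varrho_2}\right)^n\beta_n$, interchange the order of summation, and reindex with $n=j+m$, which turns the left-hand side into $\sum_{j\ge0}\alpha_j\sum_{m\ge0}\frac{(\varrho_1)_{j+m}(\varrho_2)_{j+m}}{(q)_m(aq)_{2j+m}}\left(\frac{aq}{\varrho_1\varrho_2}\right)^{j+m}$.

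Next comes the formal manipulation of $q$-Pochhammer symbols: using $(x)_{j+m}=(x)_j(xq^j)_m$ and $(aq)_{2j+m}=(aq)_{2j}(aq^{2j+1})_m$, one factors the $j$-dependent quantities out of the inner sum, leaving $\sum_{m\ge0}\frac{(\varrho_1 q^j)_m(\varrho_2 q^j)_m}{(q)_m(aq^{2j+1})_m}\left(\frac{aq}{\varrho_1\varrho_2}\right)^m$. This is a ${}_2\phi_1$ with numerator parameters $\varrho_1 q^j,\varrho_2 q^j$, denominator parameter $aq^{2j+1}$, and argument $\frac{aq}{\varrho_1\varrho_2}$, which is precisely the combination $C/(AB)$ occurring in the $q$-Gauss theorem; hence the inner sum evaluates in closed form as $\frac{(aq^{j+1}/\varrho_1)_\infty(aq^{j+1}/\varrho_2)_\infty}{(aq^{2j+1})_\infty(aq/(\varrho_1\varrho_2))_\infty}$.

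Finally I would rewrite $(aq^{j+1}/\varrho_i)_\infty=(aq/\varrho_i)_\infty/(aq/\varrho_i)_j$ and $(aq^{2j+1})_\infty=(aq)_\infty/(aq)_{2j}$, substitute back, cancel the common factor $(aq)_{2j}$, and pull the $j$-independent infinite products to the front; what remains is $\sum_{j\ge0}\frac{(\varrho_1)_j(\varrho_2)_j}{(aq/\varrho_1)_j(aq/\varrho_2)_j}\left(\frac{aq}{\varrho_1\varrho_2}\right)^j\alpha_j$, which is the asserted right-hand side. The one genuine obstacle is analytic rather than algebraic: justifying the interchange of summation and the termwise application of $q$-Gauss requires absolute convergence, which is exactly the content of the ``assuming convergence conditions'' caveat (it suffices, for instance, that $|aq/(\varrho_1\varrho_2)|<1$ together with suitable control on the growth of $\alpha_n$, or that $\varrho_1$ or $\varrho_2$ equal $q^{-N}$ so that every sum terminates); once that is granted, the remaining steps are routine bookkeeping with $q$-Pochhammer symbols, and the $q$-Gauss summation itself may be quoted from \cite{AndrewsBook}.
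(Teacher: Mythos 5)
Your proof is correct and is precisely the standard argument: the paper does not prove this lemma but cites it from Andrews' book, where the proof is exactly your substitution of the Bailey pair relation, interchange of summation, and evaluation of the inner sum by the $q$-Gauss summation theorem. The Pochhammer bookkeeping and the convergence caveat are handled correctly, so nothing further is needed.
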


Recall an identity by Andrews and Freitas \cite[Corollary 4.3]{AF}: 
\begin{align} \label{id1}
\frac{1}{(q)_\infty} \sum_{n \geq 0} \zeta^n \left( (q)_n -(q)_\infty) \right) =\sum_{n \geq 1} \frac{q^n}{\left(1-\zeta q^n\right) (q)_n},
\end{align}
and another identity \cite[formula (5.1)]{Gupta}
\begin{align} \label{id2}
& \sum_{n \geq 0} \zeta^n \left( (q)_n -(q)_\infty \right) = \sum_{n \geq 1} q^n \left(1+\zeta+ \cdots + \zeta^{n-1}\right) (q)_{n-1}.
\end{align}
If $\zeta=1$, this recovers Zagier's identity \cite[formula (16)]{Za}, and for $\zeta=0$ we get 
\begin{equation}\label{tail}
\sum_{n \geq 0} q^{n+1} (q)_n = 1-(q)_\infty.
\end{equation}
We also require an identity of Andrews, Garvan, and Liang \cite[Theorem 3.5]{AGL}
\begin{equation} \label{AGL}
	\sum_{n \geq 0} \zeta^{n} \left(\left(q^{n+1}\right)_\infty -1 \right)=\sum_{n \geq 1} \frac{(-1)^n q^{\frac{n(n+1)}{2}}}{\left(1-\zeta q^n\right) (q)_n}.
\end{equation}
Finally, we require two of Fine's identities \cite[equations (12.42),(12.45)]{Fine}
\begin{align} \label{fine}
 \sum_{n \geq 1} \frac{(-1)^{n+1} q^{\frac{n^2+n}{2}}}{\left(1-q^n\right)(q)_n}&=D(q), \\ \label{fine2}
 \sum_{n \geq 0} \left(\frac{1}{(q)_\infty}-\frac{1}{(q)_n} \right)&=\frac{1}{(q)_\infty} D(q).
\end{align}

\section{$A$-series and the proof of Theorem \ref{A7.A8}}

We start our investigation of mock and quantum modular properties of graph series by focusing on the path graphs (i.e., Dynkin diagrams of type $A$) denoted by 
$A_k$, $k \geq 1$ (as usual $A_1$ is just a single node):
\begin{center}
$$1 -2- 3 - \cdots - k$$
\end{center}
The corresponding graph series are given by 
$$
H_{A_k}(q)=\sum_{\bm n \in \N_0^k} \frac{q^{n_1 n_2 + \cdots + n_{k-1} n_k+n_1+ \cdots + n_k}}{(q)_{n_1} \cdots (q)_{n_k}}.
$$
Using \eqref{109} on the sums for $n_1$, $n_2$, $n_{k+3}$, and $n_{k+4}$ and relabeling, it is easy to see that 
for $k \geq 3$,
\begin{equation} \label{shifting}
H_{A_{k+4}}(q)=\frac{1}{(q)_\infty^2} \mathcal{H}_{A_{k}}(q),
\end{equation}
where for $k\ge2$
$$
\mathcal{H}_{A_{k}}(q):=\sum_{\bm n \in \N_0^k} \frac{q^{n_1 n_2+\cdots +n_{k-1}n_k +n_1+\cdots +n_{k}}}{(q)_{n_1+1}(q)_{n_2} \cdots (q)_{n_{k-1}}(q)_{n_{k}+1}}.
$$
Further applications of Euler's formula \eqref{109} give
$$
\mathcal{H}_{A_k}(q)= q^{-2} \sum_{n_1,...,n_{k-2} \geq 0}  \left(\frac{1}{(q)_\infty} -\frac{1}{(q)_{n_{k-2}}} \right) \left(\frac{1}{(q)_\infty}-\frac{1}{(q)_{n_1}} \right) \frac{q^{n_1n_2+...+n_{k-3}n_{k-2}+n_2+\cdots +n_{k-3}}}{(q)_{n_2} \cdots (q)_{n_{k-3}}}.
$$

The following identities are taken from \cite{JM}: 
\begin{align}
H_{A_1}(q)&=\frac{1}{(q)_\infty}, \quad
H_{A_2}(q) =\frac{1}{(1-q)(q)_\infty},  \quad
H_{A_3}(q)=\frac{q^{-1}\left(1-(q)_\infty \right)}{(q)_\infty^2}, \quad 
H_{A_4}(q)=\frac{q^{-1}}{(q)_\infty^2} D(q), 
\nonumber \\
H_{A_5}(q)&=\frac{q^{-1}}{(q)_\infty^3} \sum_{n \geq 0}\lp (q)_n -(q)_\infty \rp,\quad
H_{A_6}(q)=
	\frac{2q^{-1}}{(q)_\infty^3} D(q)
	-
	\frac{q^{-1}}{(q)_\infty^3}
	+
	\frac{q^{-1}}{(q)_\infty^2}. 
\nonumber	
\end{align}
We are now ready to prove Theorem \ref{A7.A8}.

\begin{proof}[Proof of Theorem \ref{A7.A8}]
For the first identity, we compute 
\[
H_{A_7}(q) = \frac{1}{(q)_\infty^2} \mathcal{H}_{A_3}(q)
= \frac{1}{(q)^3_\infty} \sum_{n\ge0} q^{-n-2} \left(\frac{1}{(q)_\infty}-\frac{1}{(q)_{n}}\right) \biggl((q)_{n}-(q)_{\infty} \biggr).
\]
To analyze the last sum, we introduce a new parameter $\z$ and consider
\begin{align*}
F_\z(q) &:= \sum_{n\ge0} \z^{n} \left( \frac{1}{(q)_\infty}-\frac{1}{(q)_{n}} \right)  \biggl( (q)_{n}-(q)_{\infty} \biggr)
= \sum_{n\ge0} \z^{n} \left(\left(q^{n+1}\right)_\infty  + \frac{1}{\left(q^{n+1}\right)_\infty}-2 \right)\\
&= \sum_{n\ge0} \zeta^{n} \left(\left(q^{n+1}\right)_\infty -1 \right)+\sum_{n \geq 0} \z^{n} \left( \frac{1}{\left(q^{n+1}\right)_\infty}-1 \right).
\end{align*}
Adding \eqref{AGL} and \eqref{id1} results in cancellation of the term $n=1$, so we have
$$
F_\z(q) = \sum_{n\ge2} \frac{(-1)^n q^{\frac{n(n+1)}{2}}+q^n}{\left(1-\z q^n\right) (q)_n}.
$$
Letting $\z = q^{-1}$ (which is now allowed) gives 
$$
F_{q^{-1}}(q) = \sum_{n \geq 0} q^{-n} \left( \frac{1}{(q)_\infty}-\frac{1}{(q)_{n}} \right)  \biggl( (q)_{n}-(q)_{\infty} \biggr)=\sum_{n \geq 2} \frac{(-1)^n q^{\frac{n(n+1)}{2}}+q^n}{\left(1-q^{n-1}\right) (q)_n}.
$$
Next we split the right-hand side into two sums. For the first sum we recall (\ref{fine}) and also observe 
\begin{equation*}
\frac{1}{1-q} \sum_{n \geq 1} \frac{(-1)^{n+1} q^{\frac{n^2+n}{2}+1}}{\left(1-q^n\right) (q)_n}-\frac{q^2}{(1-q)^2}=\sum_{n \geq 2} \frac{(-1)^n q^{\frac{n^2+n}{2}}}{\left(1-q^{n-1}\right)(q)_n},
\end{equation*}
which follows from the ``finite'' identity (here $k \geq 2$)
\begin{equation*}
\left( {\frac{1}{1-q} \sum_{n = 1}^{k-1}  \frac{(-1)^{n+1} q^{\frac{n^2+n}{2}+1}}{\left(1-q^n\right) (q)_n}-\frac{q^2}{(1-q)^2}} \right)- \left(\sum_{n= 2}^{k} \frac{(-1)^n q^{\frac{n^2+n}{2}}}{\left(1-q^{n-1}\right)(q)_n} \right)=\frac{q^{\frac{k^2}{2}+\frac{k}{2}+1}}{(1-q)(q)_k},
\end{equation*}
after letting $k \to \infty$. Combined this implies that
\begin{equation}\label{EE}
\sum_{n \geq 2} \frac{(-1)^n q^{\frac{n(n+1)}{2}}}{(1-q^{n-1}) (q)_n}=\frac{q}{1-q} \sum_{n \geq 2} \frac{q^n}{1-q^n}.
\end{equation}

For the second sum we reintroduce the parameter $\z$ and use that
$$
\sum_{n \geq 2} \frac{q^{n}}{(1-\z q^{n})(q)_n}= \sum_{n \geq 1} \frac{q^n}{(1-\z q^n) (q)_n} - \frac{q}{(1-q)(1-\z q)}.
$$
Employing \eqref{id2} and 
$$
\frac{q}{(1-q)(1-\z q)}=\sum_{n \geq 1} \left(1+ \cdots + \z^{n-1}\right) q^n
$$
we get 
\begin{equation}\label{id3}
\sum_{n \geq 2} \frac{q^{n}}{(1-\z q^{n})(q)_n} = \sum_{n \geq 1} q^n \left(1+\z + \cdots + \z^{n-1}\right) \left(\frac{1}{(q^n)_\infty}-1 \right).
\end{equation}
We let $\z=q^{-1}$ in \eqref{id3} to obtain 
\begin{align*}
\sum_{n \geq 2} \frac{q^{n}}{(1-q^{n-1})(q)_n} &= \sum_{n \geq 1} q^n \left(1+q^{-1} + \cdots + q^{-n+1}\right) \left(\frac{1}{(q^n)_\infty}-1 \right)\\
&= \sum_{n \geq 1} q^n \left(q^n+q^{n-1} + \cdots + q\right) \left(\frac{1}{(q^n)_\infty}-1 \right) = \frac{q}{1-q} \sum_{n \geq 1} (1-q^{n})\left(\frac{1}{(q^n)_\infty}- 1\right)\\
&= -\frac{q}{1-q} \sum_{n \geq 1} q^{n} \left(\frac{1}{(q^n)_\infty}- 1\right)+ \frac{q}{1-q} \sum_{n \geq 1} \left(\frac{1}{(q^n)_\infty}- 1\right).
\end{align*}
The first sum evaluates as
\begin{multline*}
-\frac{q^2}{1-q} \sum_{n \geq 1} q^{n-1} \left(\frac{1}{(q^n)_\infty}- 1\right) = -\frac{q^2}{1-q} \sum_{n \geq 0} q^{n} \left(\frac{1}{(q^{n+1})_\infty}- 1\right)\\
= -\frac{q^2}{(1-q)(q)_\infty} \sum_{n\ge0} q^n ((q)_n-(q)_\infty)
= -\frac{q}{(1-q)(q)_\infty} + \frac{q}{1-q}+\frac{q^2}{(1-q)^2},
\end{multline*}
where we use \eqref{id1} with $\zeta=q$ and \eqref{109}.
Thus
\begin{align*}
	\sum_{n \geq 2} \frac{q^n}{\lp 1-q^{n-1}\rp (q)_n}
	&=-\frac{q}{(1-q)(q)_\infty} + \frac{q}{1-q}+\frac{q^2}{(1-q)^2} 
	+ \frac{q}{1-q} \sum_{n \geq 1} \left(\frac{1}{(q^n)_\infty}- 1\right).
\end{align*}
 Combined with (\ref{EE}), the previous relation gives 
\begin{align*}
&\sum_{n \geq 2} \frac{(-1)^n q^{\frac{n(n+1)}{2}}+q^n}{(1-q^{n-1}) (q)_n}\\
&\hspace{3cm}= \frac{q}{1-q} \sum_{n\ge2} \frac{q^n}{1-q^n}-\frac{q}{(1-q)(q)_\infty} +\frac{q}{1-q}+\frac{q^2}{(1-q)^2}+ \frac{q}{1-q} \sum_{n \geq 1} \left(\frac{1}{(q^n)_\infty}- 1\right)\\
&\hspace{3cm}= \frac{q}{1-q} \left(D(q) -\frac{1}{(q)_\infty} +1 +\frac{1}{(q)_\infty} G(q) \right).
\end{align*}
Finally, after we multiply by $\frac{q^{-2}}{(q)_\infty^3}$ we get the claimed formula.

For $H_{A_8}(q)$, using (\ref{shifting}), we first get
\begin{align*}
H_{A_{8}}(q)&=\frac{q^{-2}}{(q)_\infty^2}  \sum_{n_1, n_2 \geq 0}  \left(\frac{1}{(q)_\infty} -\frac{1}{(q)_{n_1}} \right) \left(\frac{1}{(q)_\infty}-\frac{1}{(q)_{n_2}} \right)q^{n_1n_2}
\\ 
&=\frac{q^{-2}}{(q)_\infty^2}  \sum_{n_1, n_2 \geq 0}  \left(\frac{q^{n_1 n_2}}{(q)^2_\infty} -\frac{q^{n_1 n_2}}{(q)_{n_1} (q)_\infty} -\frac{q^{n_1 n_2}}{(q)_{n_2} (q)_\infty}+
\frac{q^{n_1 n_2}}{(q)_{n_1}(q)_{n_2}} \right).
\end{align*}
We would like to separate this into four sums but there is a convergence issue. For this reason, we first evaluate the terms with $n_1n_2=0$.
For $n_1=n_2=0$ we have 
$$
\frac{q^{-2}}{(q)_\infty^2}  \left( \frac{1}{(q)^2_\infty} -\frac{2}{(q)_\infty}+1 \right).
$$
For $n_2 \geq 1$, $n_1=0$ and $n_1 \geq 1$, $n_2=0$ (due to symmetry) we get the contribution 
$$
-\frac{2 q^{-2}}{ (q)_\infty^2}  \sum_{n \geq 1} \left(-1+\frac{1}{(q)_\infty} \right) \left(\frac{1}{(q)_\infty}-\frac{1}{(q)_n}\right).
$$
We are left with 
\begin{multline*}
\sum_{n_1,n_2\ge1}  \left(\frac{q^{n_1 n_2}}{(q)^2_\infty} -\frac{q^{n_1 n_2}}{(q)_{n_1} (q)_\infty} -\frac{q^{n_1 n_2}}{(q)_{n_2} (q)_\infty} + \frac{q^{n_1 n_2}}{(q)_{n_1}(q)_{n_2}}\right)\\
= \frac{D(q)}{(q)_\infty^2}-\frac{2}{(q)_\infty} \sum_{n \geq 0} \left(\frac{1}{(q^{n+1})_\infty}-1 \right)+\sum_{n_1,n_2 \geq 1} \frac{q^{n_1 n_2}}{(q)_{n_1}(q)_{n_2}}.
\end{multline*}
For the final sum we use an identity from \cite[Section 7.3]{JM} (which is essentially (\ref{fine2})):
$$
\sum_{n_1,n_2 \geq 1} \frac{q^{n_1 n_2}}{(q)_{n_1}(q)_{n_2}}=1+2 \frac{D(q)}{(q)_\infty}-\frac{1}{(q)_\infty}.
$$
Combining with the above, and using (\ref{fine}), yields the claim.
\end{proof}

\begin{remark} As discussed in \cite{JM}, Theorem \ref{A7.A8} implies a bosonic formula  
$$H_{A_7}(q)=\frac{q^{-1}}{(1-q)(q)_\infty^4} \left( \sum_{n \geq 1}(-1)^n (-3n+1) q^{\frac{3n^2+n}{2}} 
+ \sum_{n \leq -1}(-1)^n (3n+2) q^{\frac{3n^2+n}{2}} \right).$$
Note however that the conjecture for $H_{A_8}(q)$ given in \cite{JM} does not hold.
\end{remark}

The following result describes the asymptotic behaviors and quantum modular properties of these graph series. 
\begin{proposition} \label{seriesA} As $t \to 0^+$, we have:
\begin{itemize}[leftmargin=0.7cm]
\item[\rm(1)] $(e^{-t})_\infty H_{A_2}(e^{-t}) = \frac{1}{t} +O(1)$,
\item[\rm(2)] $(e^{-t})_\infty^2 H_{A_3}(e^{-t}) = 1 + O(t)$, 
\item[\rm(3)] $(e^{-t})_\infty^2 H_{A_4}(e^{-t}) = \frac{ \gamma-\log(t)}{t}+O(1),$
where $\gamma$ is the Euler--Mascheroni constant,
\item[\rm(4)] $(e^{-t})^{3}_\infty H_{A_5}(e^{-t}) = 1 +O(t)$,
\item[\rm(5)] $ (e^{-t})^{3}_\infty H_{A_6}(e^{-t}) = \frac{2 (\gamma-\log(t))}{t}+O(1)$, 
\item[\rm(6)] $ (e^{-t})^{4}_\infty H_{A_7}(e^{-t}) = 1+O(t)$,
\item[\rm(7)] $ (e^{-t})^{4}_\infty H_{A_8}(e^{-t}) = \frac{3 (\gamma-\log(t))}{t}+O(1).$

\end{itemize}
Moreover, $q (q)_\infty^2 H_{A_4}(q)$ is a holomorphic quantum modular form of weight one, while $q (q)^{3}_\infty H_{A_5}(q)$ and $q (1-q) (q)^{3}_\infty H_{A_7}(q)$ are quantum modular forms of weight $\frac32$.
\end{proposition}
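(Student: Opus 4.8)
\emph{Plan.} Everything rests on the fact, used throughout Section 3, that each $H_{A_j}(q)$ for $1\le j\le 8$ has an explicit closed form as a $\C$-linear combination of products of $q^{-1}$, $(1-q)^{-1}$, $(q)_\infty^{\pm 1}$, $D(q)$ and $G(q)$: for $j\le 6$ these are the formulas recalled just above, and for $j=7,8$ they are Theorem \ref{A7.A8}. So it suffices, for the asymptotics, to expand each of these five building blocks as $t\to 0^+$ with $q=e^{-t}$, and, for the modularity statements, to understand $(q)_\infty^{\pm1}$, $D$ and $G$ as (quantum) modular objects.

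For the asymptotics I would record: $1-q=t+O(t^2)$ and $q^{-1}=1+O(t)$; $(q)_\infty$, hence every power of it, is exponentially small, with $(q)_\infty=\sqrt{t/2\pi}\,e^{-\pi^2/6t}(1+o(1))$ coming from the inversion law for $\eta$ recalled in Subsection \ref{modularity}; $D(e^{-t})=\tfrac{\gamma-\log t}{t}+\tfrac14+O(t)$, via the Mellin transform $\int_0^\infty x^{s-1}\sum_{N\ge1}\sigma_0(N)e^{-Nx}\,dx=\Gamma(s)\zeta(s)^2$ (double pole at $s=1$ contributing $\tfrac{\gamma-\log t}{t}$, pole of $\Gamma$ at $s=0$ contributing $\zeta(0)^2=\tfrac14$), or equivalently from the quantum modularity of $D$; and $G(e^{-t})=1+O(t)$, obtained from $G(q)=(1-(q)_\infty)+\sum_{n\ge1}\big((q)_n-(q)_\infty\big)$ with each summand $(q)_n(1-(q^{n+1})_\infty)\ge 0$, splitting off $n=1,2$ and bounding the remaining tail by $(q)_3\,D(q)$ via the identity $\sum_{m\ge1}(1-(q^m)_\infty)=D(q)$, which is a rewriting of \eqref{fine2}. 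Substituting these expansions into the closed forms and collecting leading terms yields (1)--(7); the one place needing attention is the logarithmic bookkeeping in (3), (5), (7), where the $\tfrac1t\log\tfrac1t$ term of $D$ meets the $q^{-1}$ and $(1-q)^{-1}$ prefactors.

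For the modularity statements: from $H_{A_4}(q)=\tfrac{q^{-1}}{(q)_\infty^2}D(q)$ one gets $q(q)_\infty^2H_{A_4}(q)=D(q)$, a holomorphic quantum modular form of weight one by Lemma \ref{hq}. From $H_{A_5}(q)=\tfrac{q^{-1}}{(q)_\infty^3}\sum_{n\ge0}\big((q)_n-(q)_\infty\big)$ one gets $q(q)_\infty^3H_{A_5}(q)=G(q)$; for $x=\tfrac{a}{c}\in\Q$ in lowest terms and $q=e^{2\pi i x}$ one has $(q)_\infty=0$ and $(q)_n=0$ for $n\ge c$, so $G(q)=\sum_{n=0}^{c-1}(q)_n$, i.e.\ on $\Q$ the function $G$ coincides with the Kontsevich--Zagier ``strange'' series $\sum_{n\ge0}(q)_n$, a quantum modular form of weight $\tfrac32$ by Zagier \cite{ZaQ}; hence so is $q(q)_\infty^3H_{A_5}(q)$. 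For $H_{A_7}$, the Remark following Theorem \ref{A7.A8} identifies $-1+(q)_\infty D(q)+G(q)+(q)_\infty$ with an explicit false theta function, so $q(1-q)(q)_\infty^3H_{A_7}(q)$ equals $\tfrac{1}{(q)_\infty}$ times that false theta; combining the standard quantum modularity of false (partial) theta functions with the modular transformation of $\tfrac1{(q)_\infty}=q^{1/24}/\eta(\tau)$ then exhibits it as a mixed quantum modular form of weight $\tfrac32$.

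I expect the $H_{A_7}$ case to be the main obstacle: one must match the transformation law of the false theta with that of $1/(q)_\infty$ precisely enough to read off the multiplier and weight and to pin down the quantum set on which the obstruction to modularity extends real-analytically, so that the two pieces genuinely assemble into a (mixed) quantum modular form rather than merely a function with some near-modular behavior. By contrast the asymptotic computations are routine, the one subtlety being the $\log\tfrac1t$ terms noted above.
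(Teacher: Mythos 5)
Your asymptotic argument follows the same overall strategy as the paper's proof (substitute known expansions of the building blocks into the closed forms), but you treat $G$ differently: the paper invokes Zagier's identity \eqref{z-identity}, $G(q)=-\tfrac12 H(q)+(q)_\infty\left(\tfrac12-D(q)\right)$, together with $H(e^{-t})=-2-2t+O(t^2)$, whereas you estimate $G(e^{-t})$ directly by splitting off the first few terms and bounding the tail using $\sum_{m\ge1}\left(1-(q^m)_\infty\right)=D(q)$ (correctly identified as a rewriting of \eqref{fine2}). Your route is more elementary and works, with one caveat you should make explicit: in item (6) the leading constants cancel, since $q(1-q)(q)_\infty^4H_{A_7}(q)=-1+G(q)+(q)_\infty D(q)+(q)_\infty$ and the $-1$ kills the constant term of $G$ before division by $1-q\sim t$; so you genuinely need $G(e^{-t})=1+t+O(t^{2})$, which forces you to split off one more term than you propose (your stated tail bound $(q)_3D(q)=O(t^2\log(1/t))$ only yields $1+O(t\log(1/t))$ in (6)). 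The paper gets the needed coefficient from $H(e^{-t})=-2-2t+O(t^2)$. Your worry about ``logarithmic bookkeeping'' in (3), (5), (7) is justified: after multiplying by $q^{-1}$ the function $D$ contributes a secondary term $\gamma-\log t$, so the error there is really $O(\log(1/t))$; this imprecision is already in the statement and the paper's proof does not address it either. Your $H_{A_4}$ and $H_{A_5}$ modularity arguments coincide in substance with the paper's (your identification of $G$ on $\Q$ with the finite sums $\sum_{n=0}^{c-1}(q)_n$ is exactly the mechanism hidden in the paper's use of \eqref{z-identity}, since $(q)_\infty$ vanishes at roots of unity).

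The genuine gap is the quantum modularity of $H_{A_7}$. Writing $q(1-q)(q)_\infty^3H_{A_7}(q)$ as $(q)_\infty^{-1}$ times the false theta function of the Remark and ``combining'' transformation laws cannot produce a weight-$\tfrac32$ quantum modular form: $\eta^{-1}$ carries weight $-\tfrac12$, so the product has the wrong weight and is at best a mixed object (which is what you end up claiming, but it is not what the statement asserts); moreover $1/(q)_\infty$ blows up at every root of unity, so this expression is not even defined on a quantum set $\mathcal{Q}\subset\Q$. The argument the paper intends — the same one you use for $H_{A_5}$ — is that the terms carrying an explicit factor $(q)_\infty$, namely $(q)_\infty D(q)$ and $(q)_\infty$, vanish at roots of unity, so that on $\Q$ the relevant normalization of $H_{A_7}(q)$ from Theorem \ref{A7.A8} reduces to $-1+G(q)$, i.e.\ to the same weight-$\tfrac32$ quantum object as in the $A_5$ case, and no transformation of $\eta^{-1}$ enters at all. (For this to parse one needs the fourth power of $(q)_\infty$ in the normalization, matching the denominator in Theorem \ref{A7.A8}; the exponent $3$ printed in the statement appears to be a slip, which may be part of what led you astray.)
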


\begin{proof}
Statements (1) and (2) are immediate. The asymptotic behavior in (3) is well-known and can easily be concluded using the Euler--Maclaurin summation formula \cite{Za2006}. Quantum modular properties of $D(q)$ are given in Lemma \ref{hq}. 
For (4), we rewrite (see Theorem 2 of \cite{Za}):
\begin{equation} \label{z-identity}
G(q)=-\frac 12 H(q)+(q)_\infty \lp \frac 12 - D(q) \rp,
\end{equation}
where 
$$
H(q):=\sum_{n \geq 1} n \left(\frac{12}{n}\right) q^{\frac{n^2-1}{24}}
$$
is quantum modular of weight $\frac32$ and satisfies the asymptotic behavior \cite[Theorem 3]{Za}
$$
H\left(e^{-t}\right) = -2 -2t +O\left(t^2\right).
$$
Thus
\[
 \lp e^{-t} \rp^3_\infty H_{A_5}\left(e^{-t}\right) = -\frac 12 H \lp e^{-t} \rp  \lp 1+ O(t) \rp= 1+O(t).
\]

To see (5), we use part (3). For (6) we recall an identity from \cite[Section 7.4]{JM} 
\begin{multline*}
 \sum_{n \geq 1} (-3n+1)(-1)^n q^{\frac{3n^2+n}{2}} 
+ \sum_{n \leq -1} (3n+2)(-1)^n q^{\frac{3n^2+n}{2}} 
 =-1+(q)_\infty D(q)+G(q) +(q)_\infty.
\end{multline*}
Using (\ref{z-identity}) we get 
$$
-1+G\lp e^{-t} \rp = t +O\left(t^2\right).
$$
Since $\frac{1}{1-e^{-t}} = \frac{1}{t} + O(t)$, (6) follows.
Quantum modularity follows as before. For (7) we use exactly the same argument as in (5).
\end{proof}
Based on Proposition \ref{seriesA}, we conjecture that the following is true.
\begin{conjecture}
For $n \geq 1$, there exist $a_n,b_n,c_n\in\R^+$ such that
$$
\left(e^{-t}\right)_\infty^n H_{A_{2n}}\left(e^{-t}\right)  \sim \frac{a_n+c_n \log(t)}{t}, \qquad
\left(e^{-t}\right)_\infty^n H_{A_{2n-1}}\left(e^{-t}\right)  \sim b_n,
\qquad\text{as } t\rightarrow0^{+}.
$$
\end{conjecture}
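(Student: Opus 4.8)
\emph{A possible approach.} The plan is to deduce the conjecture from one structural statement together with standard asymptotics. Write $\mathbb H_k(q):=(q)_\infty^{\lceil k/2\rceil}H_{A_k}(q)$; the cases $k\le 2$ being elementary (Proposition~\ref{seriesA}(1) and $H_{A_1}=1/(q)_\infty$), I would try to show that for every $k\ge 3$ one has $\mathbb H_k\in\C\!\left[\tfrac1{1-q},q^{-1},\tfrac1{(q)_\infty},D(q),G(q)\right]$ (as is already known for $k\le 8$ by the Corollary), and, more precisely, that the only part of $\mathbb H_k$ contributing to the leading behaviour as $q\to 1^-$ is $\tfrac{k-2}{2}\,D(q)$ when $k$ is even, while $\mathbb H_k$ has a finite nonzero limit when $k$ is odd, with every occurrence of $\tfrac1{1-q}$ attached to a factor vanishing to first order in $t$ (exactly as happens for $A_7$). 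Granting this, the conjecture is immediate: substitute $(q)_\infty=\sqrt{2\pi/t}\;e^{-\pi^2/(6t)}(1+O(t))$ from the modularity of $\eta$ (so any positive power of $(q)_\infty$ annihilates factors polynomial in $t^{-1}$ and $\log(1/t)$), $D(e^{-t})=\tfrac{\gamma-\log t}{t}+O(1)$ from Lemma~\ref{hq} and Euler--Maclaurin, $\tfrac1{1-e^{-t}}=\tfrac1t+O(1)$, and $-1+G(e^{-t})=t+O(t^2)$ via Zagier's identity \eqref{z-identity} (using $H(e^{-t})=-2-2t+O(t^2)$ for the function $H$ appearing there). Comparison with $k\le 8$ in Proposition~\ref{seriesA} predicts $a_n=(n-1)\gamma$ for $n\ge 2$, $a_1=1$, $b_n=1$, and a $t^{-1}\log(1/t)$-term of coefficient $n-1$.

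\emph{The structural step} is the crux, and I would argue by induction on $k$ using the shift identity \eqref{shifting}, $H_{A_{k+4}}(q)=(q)_\infty^{-2}\mathcal H_{A_k}(q)$, together with the ``double Euler'' reduction of $\mathcal H_{A_k}$ recorded immediately after it, which writes $\mathcal H_{A_k}$ as a $(k-2)$-fold sum carrying the two boundary factors $\tfrac1{(q)_\infty}-\tfrac1{(q)_{n}}$. Iterating the shift by four brings every $H_{A_k}$ down to a bounded number of ``residual'' multi-sums attached to one of the base cases $A_1,\dots,A_4$, and one evaluates those residual sums with the same $q$-series toolbox used for $k=7,8$ in the proof of Theorem~\ref{A7.A8}: Euler's identity \eqref{109}, the Bailey-pair/Bailey-lemma formalism (Lemma~\ref{L:BaileysLemma}), and the ``sum of tails'' identities \eqref{id1}--\eqref{fine2} specialized at $\zeta=q^{-1}$. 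This should produce explicit linear recursions for the coefficient of each generator in $\mathbb H_k$, from which the stated $D(q)$-coefficient $\tfrac{k-2}{2}$ (even $k$), the vanishing of the $D(q)$-contribution (odd $k$), and the boundedness of the constant part follow.

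\emph{The main obstacle} is controlling these residual sums uniformly in $k$: each pass of the shift by four introduces further $\tfrac1{(q)_\infty}-\tfrac1{(q)_n}$-type factors, and one must show that products of them, summed against $1/((q)_{n_1}\cdots)$ with the appropriate $q$-powers, never leave the ring $\C[\tfrac1{1-q},q^{-1},\tfrac1{(q)_\infty},D,G]$ and never alter the leading growth. This is believable both from the $k\le 8$ data and from geometry: the affine cone of $R_{A_k}$ is the union of the coordinate subspaces indexed by the independent sets of the path, has dimension $\lceil k/2\rceil$, and---decisively---has a \emph{unique} top-dimensional component for odd $k$ but $\lceil k/2\rceil+1$ of them for even $k$, which is morally why odd paths yield a constant and even paths a logarithm. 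A uniform proof will nevertheless require either a closed form for the generic residual sum or a careful Bailey-chain argument tracking the $q$-degree of each factor, and I expect that bookkeeping to be the real work.

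\emph{An alternative, more analytic route} circumvents the structural result: write $H_{A_k}(e^{-t})=\sum_{\bm n\in\N_0^k}\big(\prod_i\tfrac{e^{-tn_i}}{(e^{-t})_{n_i}}\big)\prod_i e^{-tn_in_{i+1}}$, note that for small $t$ the mass concentrates on $n_i\asymp t^{-1/2}$, rescale, and perform a Laplace/saddle-point analysis. The difficulty is that the path form $\sum_i n_in_{i+1}$ is not positive definite---only positive on the positive orthant---so Meinardus-type machinery does not apply verbatim; one would diagonalize on the orthant (or rotate contours, or approach $q=1$ nontangentially in a circle-method setup), with the real labour being the extraction of the precise constants $a_n,b_n,c_n$ rather than just the growth order. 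Either route would, along the way, also establish the ring membership $H_{A_k}\in\C[\tfrac1{1-q},q^{-1},\tfrac1{(q)_\infty},D,G]$ for all $k$ that underlies the finite-generation picture mentioned after the Corollary.
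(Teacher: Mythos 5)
This statement is labeled as a \emph{conjecture} in the paper and the authors give no proof of it; it is put forward on the strength of the verified cases $1\le n\le 4$ in Proposition \ref{seriesA}. So there is no argument in the paper to compare against, and your proposal must be judged on its own. As written, it is an honest research program rather than a proof, and the gap you yourself flag as ``the real work'' is in fact the entire content of the conjecture.

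Concretely: the shift identity \eqref{shifting} reduces $H_{A_{k+4}}$ not to $H_{A_k}$ but to the modified sum $\mathcal H_{A_k}$, which after the double Euler reduction is a $(k-2)$-fold sum carrying two boundary factors $\frac1{(q)_\infty}-\frac1{(q)_n}$ coupled through the path quadratic form. This is a genuinely new object, not an instance of the original family, so iterating the shift does not ``bring every $H_{A_k}$ down to a bounded number of residual sums attached to base cases'': each pass leaves a full multi-sum whose evaluation must be carried out from scratch, and the evaluations for $k=7,8$ in Theorem \ref{A7.A8} each required bespoke combinations of \eqref{id1}, \eqref{id2}, \eqref{AGL}, \eqref{fine}, and a delicate specialization at $\zeta=q^{-1}$ that had to be justified by a finite identity and a limit. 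No mechanism is given for producing, for general $k$, either the ring membership in $\C[\frac1{1-q},q^{-1},\frac1{(q)_\infty},D,G]$ or the asserted $D(q)$-coefficient $\frac{k-2}2$; the independent-set/component-count heuristic is suggestive but proves nothing about the infinite jet scheme. The alternative saddle-point route is likewise only sketched, and you correctly note that the indefiniteness of the path form blocks the standard Meinardus machinery. (Two smaller points: the predicted constants $a_n=(n-1)\gamma$, $c_n=n-1$ are consistent with the data for $n\le4$ and are a useful refinement, but note that $c_1=0$ already sits uneasily with the paper's requirement $c_n\in\R^+$; and the asymptotic $(e^{-t})_\infty\sim\sqrt{2\pi/t}\,e^{-\pi^2/(6t)}$ is used correctly to discard terms carrying positive powers of $(q)_\infty$.) In short, the proposal is a plausible and well-motivated plan, but the inductive/structural step it rests on is unproved and is precisely where the difficulty lies; the statement remains open both in the paper and in your write-up.
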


\section{$5$-cycles, sums of divisors, and the proof of \eqref{HC5} and \eqref{g8}}

In this part we are concerned with series coming from certain graphs obtained by glueing $5$-cycles. 
Generally, graph series associated to graphs with cycles are more complicated to analyze. We start from an auxiliary result that, quite surprisingly, allows us to perform computations for several interesting examples of graphs. The next lemma can be viewed as a generalization of the $A_2$-identity discussed in the previous section.
\begin{lemma} \label{framing}
For $a,b \in \mathbb{N}_0$, we have
\begin{align*} A(a,b) :&=\sum_{n_1,n_2 \geq 0} \frac{q^{n_1 n_2+(a+1)n_1+(b+1)n_2}}{(q)_{n_1}(q)_{n_2}} 
=\frac{1}{ \left(q^{b+1}\right)_{a+1}\left(q^{a+1}\right)_\infty}=\frac{1}{\left(q^{a+1}\right)_{b+1}\left(q^{b+1}\right)_\infty}.
\end{align*}
\end{lemma}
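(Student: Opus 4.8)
The plan is to evaluate the double sum by performing the $n_1$-summation first and recognizing what remains as an instance of the $q$-binomial theorem. First I would fix $n_2$ and collect the powers of $q$ depending on $n_1$, writing $q^{n_1 n_2 + (a+1)n_1} = (q^{n_2+a+1})^{n_1}$; then Euler's identity \eqref{109} with $\zeta = q^{n_2+a+1}$ gives
$$
\sum_{n_1 \geq 0} \frac{(q^{n_2+a+1})^{n_1}}{(q)_{n_1}} = \frac{1}{(q^{n_2+a+1})_\infty}.
$$
The interchange of the two summations is harmless since all terms are positive and the relevant series converge absolutely for $|q|<1$. Using the factorization $(q^{n_2+a+1})_\infty = (q^{a+1})_\infty/(q^{a+1})_{n_2}$ this leaves
$$
A(a,b) = \frac{1}{(q^{a+1})_\infty} \sum_{n_2 \geq 0} \frac{(q^{a+1})_{n_2}}{(q)_{n_2}}\, q^{(b+1)n_2}.
$$

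The second step is to evaluate this remaining sum. By the $q$-binomial theorem (Cauchy's identity) $\sum_{n\ge0} \frac{(\alpha)_n}{(q)_n} z^n = \frac{(\alpha z)_\infty}{(z)_\infty}$, applied with $\alpha = q^{a+1}$ and $z = q^{b+1}$, it equals $\frac{(q^{a+b+2})_\infty}{(q^{b+1})_\infty} = \frac{1}{(q^{b+1})_{a+1}}$, the last equality being the telescoping $(q^{b+1})_\infty = (q^{b+1})_{a+1}(q^{a+b+2})_\infty$. Substituting back yields
$$
A(a,b) = \frac{1}{(q^{a+1})_\infty (q^{b+1})_{a+1}},
$$
which is the first claimed formula. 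If one prefers a proof self-contained relative to what the paper has quoted, the same evaluation follows by induction on $a$ from the Pascal-type recurrence for Gaussian binomial coefficients, with base case $a=0$ being exactly \eqref{109}.

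The third and final step is the second equality, which is immediate from the symmetry of the summand: interchanging the roles of $n_1$ and $n_2$ shows $A(a,b) = A(b,a)$, and applying the formula just proved to $A(b,a)$ gives $A(a,b) = \frac{1}{(q^{b+1})_\infty (q^{a+1})_{b+1}}$; as a consistency check, both right-hand sides equal $\frac{(q^{a+b+2})_\infty}{(q^{a+1})_\infty (q^{b+1})_\infty}$. I do not expect a genuine obstacle here: the only points needing a word of care are the justification for swapping the order of summation and, if one wants to avoid invoking the $q$-binomial theorem, supplying the short inductive argument for the resulting single sum.
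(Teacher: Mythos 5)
Your proof is correct and follows essentially the same route as the paper: sum over $n_1$ first via Euler's identity, rewrite $(q^{n_2+a+1})_\infty$ as $(q^{a+1})_\infty/(q^{a+1})_{n_2}$, evaluate the remaining $n_2$-sum by the $q$-binomial theorem (the paper cites the equivalent identity from Fine's book), and obtain the second equality from the symmetry $A(a,b)=A(b,a)$. No substantive differences.
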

\begin{proof} The second equality follows due to the symmetry $A(a,b)=A(b,a)$. To show the first, we recall a well-known formula (see \cite[equation (6.2)]{Fine})
\begin{equation} \label{sss}
\sum_{n \geq 0} \frac{(s q)_n}{(q)_n} t^n=\frac{(s tq)_\infty}{(t)_\infty}.
\end{equation}
We compute
\begin{align*} 
A(a,b) =\sum_{n_2 \geq 0} \frac{q^{(b+1)n_2}}{\left(q^{n_2+a+1}\right)_\infty (q)_{n_2}}
=\frac{1}{\left(q^{a+1}\right)_\infty} \sum_{n \geq 0}\frac{\lp q^{a+1}\rp_{n} q^{(b+1)n}}{(q)_{n}} = \frac{1}{\left(q^{a+1}\right)_\infty \left(q^{b+1}\right)_{a+1} },
\end{align*}
where the last equality follows from (\ref{sss}), letting $s=q^{a+1}$ and $t=q^{b+1}$.
\end{proof}
Equipped with this result we can now give several elegant representations of graph series associated to a $5$-cycle

\begin{center}
\begin{tikzpicture}
  \node (n4) at (4.3,3)  {4};
  \node (n5) at (4,4)  {5};
  \node (n1) at (5,5) {1};
  \node (n2) at (6,4)  {2};
  \node (n3) at (5.6,3)  {3};
    \foreach \from/\to in {n4/n5,n5/n1,n1/n2,n2/n3,n3/n4}
    \draw (\from) -- (\to);
    
  \node at (6.5,3.9)  {,};
\end{tikzpicture}
\end{center}
in particular proving \eqref{HC5}.

\begin{proposition}\label{prop:4.2} We have
\begin{align*} 
\sum_{\bm n \in \N_0^5}  \frac{q^{n_1 n_2+n_1 n_5+n_2 n_3+n_3n_4+n_4 n_5+n_1+n_2+n_3+n_4+n_5}}{(q)_{n_1}(q)_{n_2}(q)_{n_3}(q)_{n_4}(q)_{n_5}}&=\frac{q^{-1}}{(q)^2_\infty} \sum_{n \geq 1} \frac{nq^n}{1-q^n},\\
\sum_{\bm n \in \N_0^5}\frac{q^{n_1 n_2+n_1 n_5+n_2 n_3+n_3n_4+n_4 n_5+2n_1+n_2+n_3+n_4+n_5}}{(q)_{n_1}(q)_{n_2}(q)_{n_3}(q)_{n_4}(q)_{n_5}}&={\frac{1}{(1-q)^2(q)^2_\infty}},\\
\sum_{\bm n \in \N_0^5}\frac{q^{n_1 n_2+n_1 n_5+n_2 n_3+n_3n_4+n_4 n_5+n_1+2n_2+n_3+n_4+2n_5}}{(q)_{n_1}(q)_{n_2}(q)_{n_3}(q)_{n_4}(q)_{n_5}}&={\frac{q^{-2}}{(q)^2_\infty} \sum_{n \geq 2} \frac{nq^n}{1-q^n}}.
\end{align*}
\end{proposition}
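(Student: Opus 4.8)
The strategy is to exploit the structure of the $5$-cycle: it is a path $1-2-3-4-5$ with the extra edge $\{1,5\}$ closing it up. The trick is to break the cycle by fixing two adjacent variables and recognizing the remaining sum as an instance of Lemma \ref{framing}. Concretely, I would sum first over the three ``interior'' variables $n_2, n_3, n_4$ along the path $2-3-4$, treating $n_1$ and $n_5$ as parameters; this is a telescoping application of Euler's identity \eqref{109}. Then the residual double sum over $n_1, n_5$ will carry the edge $\{1,5\}$ together with the linear terms produced by the inner summation, and this is precisely the shape $A(a,b)$ handled by Lemma \ref{framing}, or a small variant of it.

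In detail, for the first identity: summing over $n_2$ via \eqref{109} gives a factor $\frac{1}{(q^{n_1+n_3+1})_\infty}$; then summing over $n_3$ (which now appears in the exponent of that infinite product as well as in $q^{n_3 n_4 + n_3}$) requires combining the two occurrences, and similarly summing over $n_4$ yields $\frac{1}{(q^{n_1+n_5+1})_\infty}$ times extra Pochhammer and linear-in-$q^{n_1},q^{n_5}$ factors. I expect the upshot to be that the three inner sums collapse to something like $\frac{1}{(q)_\infty^2}$ times a sum over $n_1,n_5$ of $\frac{q^{n_1 n_5 + \text{(linear)}}}{(q)_{n_1}(q)_{n_5}}$ — at which point Lemma \ref{framing} evaluates it in closed form. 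For the exact bookkeeping one likely needs the identity \eqref{fine2}, $\sum_{n\ge0}\big(\frac{1}{(q)_\infty}-\frac{1}{(q)_n}\big)=\frac{1}{(q)_\infty}D(q)$, or the Fine identity \eqref{fine} to extract the divisor sum $\sum_{n\ge1}\frac{nq^n}{1-q^n}$, since $\frac{nq^n}{1-q^n}$ is the ``level-$2$'' divisor sum $\sum_{n}\sigma_1(n)q^n$ in disguise, i.e. $\sum_{n\ge1}\frac{nq^n}{1-q^n}=\sum_{n\ge1}\sigma_1(n)q^n$; note this is exactly $\frac{1-E_2(q)}{24}$-adjacent, and its appearance signals a double application of the derivative-in-$\zeta$ trick used in Lemma \ref{AL_E2}. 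An alternative route, probably cleaner, is to introduce parameters $\zeta_1,\zeta_5$ replacing $q^{n_1},q^{n_5}$, reduce the full five-fold sum to the two-variable generating function $A(a,b)$ with $q^{a+1}\mapsto \zeta_1, q^{b+1}\mapsto \zeta_5$, differentiate or specialize appropriately, and then recognize $\sum_{n\ge1}\frac{nq^n}{1-q^n}$ as the result of a single $\frac{\partial}{\partial\zeta}$ applied at $\zeta=1$ to a geometric-type series.

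The second identity, with linear term $2n_1 + n_2+n_3+n_4+n_5$, should be strictly easier: the extra $+n_1$ in the exponent means that after the same inner summation one lands on an $A(a,b)$ with $a\ge1$, so Lemma \ref{framing} gives $\frac{1}{(q^{a+1})_{b+1}(q^{b+1})_\infty}$ with no singular specialization, and the answer $\frac{1}{(1-q)^2(q)_\infty^2}$ should drop out after routine simplification (the two $\frac{1}{1-q}$ factors coming from $\frac{1}{(q)_\infty^2}\cdot\frac{1}{(q^2)_{1}}$-type terms, i.e. from shifting two of the three $(q)_\infty$'s). The third identity, with linear term $n_1+2n_2+n_3+n_4+2n_5$, has the weights $2$ on the \emph{non-adjacent-to-the-broken-edge} variables $n_2$ and $n_5$; I would handle it by the same inner summation but keeping track of the shifted exponents, and I expect the outcome $\frac{q^{-2}}{(q)_\infty^2}\sum_{n\ge2}\frac{nq^n}{1-q^n}$ to follow from the first identity's computation with the divisor sum truncated by the removal of its $n=1$ term, analogous to how equation \eqref{EE} and the passage from $H_{C_5}$ to $H_{\Gamma_8}$-type series work in Theorem \ref{pentagons}.

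The main obstacle will be the middle step: carrying out the three successive Euler-identity summations along the path $2-3-4$ while correctly tracking how the exponent $n_1+n_3$ (resp. $n_1+n_5$) inside the infinite Pochhammer products interacts with the quadratic terms $n_2 n_3$ and $n_3 n_4$. Naively summing produces nested $\frac{1}{(q^{n_1+n_3+1})_\infty}$ factors that do not immediately separate, and one must either expand $\frac{1}{(q^{m})_\infty}=\frac{1}{(q)_\infty}\sum_k \frac{?}{?}$ carefully or, better, use \eqref{sss} (Fine's $\sum_n \frac{(sq)_n}{(q)_n}t^n = \frac{(stq)_\infty}{(t)_\infty}$) at each stage exactly as in the proof of Lemma \ref{framing}. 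The risk is a proliferation of terms and convergence issues at the final specialization $\zeta\to 1$ (resp. $q^{-1}$, $q^{-2}$), which — as in the proof of Theorem \ref{A7.A8} — must be resolved by first isolating the offending boundary terms before splitting sums. Once the reduction to $A(a,b)$ is in hand, the rest is bookkeeping with \eqref{fine}, \eqref{fine2}, and geometric-series manipulations.
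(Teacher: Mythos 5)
Your toolkit (Euler's identity \eqref{109} together with Lemma \ref{framing}) is the right one, but the order of summation you propose does not lead where you expect, and the obstacle you flag in your last paragraph is exactly where the argument breaks rather than a bookkeeping nuisance. Summing out the path $2-3-4$ first (say $n_2$ and $n_4$, then $n_3$) leaves, for the first identity,
\begin{equation*}
\frac{1}{(q)_\infty^2}\sum_{n_1,n_3,n_5\ge0}\frac{q^{n_1n_5+n_1+n_3+n_5}\,(q)_{n_1+n_3}(q)_{n_3+n_5}}{(q)_{n_1}(q)_{n_3}(q)_{n_5}},
\end{equation*}
a genuinely entangled triple sum that is not of the shape $A(a,b)$ and for which you offer no evaluation; neither \eqref{fine}, \eqref{fine2}, nor a $\zeta$-derivative trick applies to it in any direct way. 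The paper's proof uses the opposite decomposition: first sum over the single vertex $n_1$, which produces $\frac{1}{(q^{n_2+n_5+c})_\infty}=\frac{(q)_{n_2+n_5+c-1}}{(q)_\infty}$ and leaves the open path $2-3-4-5$; then apply Lemma \ref{framing} to the \emph{middle} pair $(n_3,n_4)$ with parameters $a=n_2$, $b=n_5$, yielding $\frac{1}{(q^{n_2+1})_\infty(q^{n_5+1})_{n_2+1}}$. The idea you are missing is the ensuing Pochhammer cancellation $(q)_{n_2+n_5+1}=(q)_{n_5}(q^{n_5+1})_{n_2+1}$, after which the remaining $(n_2,n_5)$-sum collapses to $\sum_{n_2,n_5\ge0}q^{n_2+n_5}=\frac{1}{(1-q)^2}$ in the second identity and to $\sum_{n_2,n_5\ge0}\frac{q^{n_2+n_5}}{1-q^{n_2+n_5+1}}=q^{-1}\sum_{n\ge1}\frac{nq^n}{1-q^n}$ in the first; the divisor-type sum arises simply by counting the $n+1$ pairs with $n_2+n_5=n$, with no need for Fine's identities, Eisenstein series, or any limiting specialization $\zeta\to q^{-1}$.

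You also overlook the cheap reduction that disposes of the third identity: writing $1=q^{n_1}+(1-q^{n_1})$, using $\frac{1-q^{n_1}}{(q)_{n_1}}=\frac{1}{(q)_{n_1-1}}$, and shifting $n_1\mapsto n_1+1$ shows that the three left-hand sides satisfy $L_1=L_2+qL_3$, matching the evident relation $B_1=B_2+qB_3$ among the right-hand sides, so only the first two identities need to be proved. As written, your plan would require you to discover the correct summation order and the cancellation mechanism from scratch at the point where you currently say ``the rest is bookkeeping''; that is the substance of the proof, not its residue.
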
 
\begin{proof}
Let 
$$
B_1(q):=\sum_{n \geq 1} \frac{nq^n}{1-q^n}, \qquad B_2(q):={\frac{1}{(1-q)^2(q)^2_\infty}}, \qquad B_3(q):=\frac{q^{-2}}{(q)^2_\infty} \sum_{n \geq 2} \frac{nq^n}{1-q^n}.
$$
Note that  $B_2(q)+qB_3(q)=B_1(q)$. It is easy to see that the same relation holds for the left-hand sides.  Thus it suffices to prove the first two identities. We start with the second identity.  Euler's identity \eqref{109} gives 
\begin{align*}
& \sum_{\bm n \in \N_0^5}\frac{q^{n_1 n_2+n_1 n_5+n_2 n_3+n_3n_4+n_4 n_5+2n_1+n_2+n_3+n_4+n_5}}{(q)_{n_1}(q)_{n_2}(q)_{n_3}(q)_{n_4}(q)_{n_5}}=\sum_{n_2,n_3,n_4,n_5 \geq 0} \frac{q^{n_2 n_3+n_3 n_4+n_4 n_5+n_2+n_3+n_4+n_5}}{\left(q^{n_2+n_5+2}\right)_\infty(q)_{n_2}(q)_{n_3}(q)_{n_4}(q)_{n_5}} \\
& = \frac{1}{(q)_\infty } \sum_{n_2,n_3,n_4,n_5 \geq 0} \frac{q^{n_2 n_3+n_3 n_4+n_4 n_5+n_2+n_3+n_4+n_5} (q)_{n_2+n_5+1} }{(q)_{n_2}(q)_{n_3}(q)_{n_4}(q)_{n_5}} \\
& =\frac{1}{(q)_\infty} \sum_{n_2,n_5 \geq 0} \frac{q^{n_2+n_5} (q)_{n_2+n_5+1}}{(q)_{n_2}(q)_{n_5}} {\sum_{n_3,n_4 \geq 0} \frac{q^{n_3 n_4+(n_2+1)n_3+(n_5+1)n_4}}{(q)_{n_3}(q)_{n_4}}}.
\end{align*}
Using this equals
\begin{align*}
 & \frac{1}{(q)_\infty} \sum_{n_2 , n_5 \geq 0} \frac{q^{n_2+n_5} (q)_{n_2+n_5+1}}{(q)_{n_2}(q)_{n_5}} \frac{1}{\left(q^{n_2+1}\right)_\infty \left(q^{n_5+1}\right)_{n_2+1}}= \frac{1}{(q)_\infty^2} \sum_{n_2,n_5 \geq 0} q^{n_2+n_5}=\frac{1}{(1-q)^2 (q)_\infty^2 }.
\end{align*}

For the first identity we use a similar argument. We get
\begin{align*}
& \sum_{\bm n \in \N_0^5} \frac{q^{n_1 n_2+n_1 n_5+n_2 n_3+n_3n_4+n_4 n_5+n_1+n_2+n_3+n_4+n_5}}{(q)_{n_1}(q)_{n_2}(q)_{n_3}(q)_{n_4}(q)_{n_5}} =\sum_{n_2,n_3,n_4,n_5 \geq 0} \frac{q^{n_2 n_3+n_3 n_4+n_4 n_5+n_2+n_3+n_4+n_5}}{\left(q^{n_2+n_5+2}\right)_\infty(q)_{n_2}(q)_{n_3}(q)_{n_4}(q)_{n_5}}  \\
&= \frac{1}{(q)_\infty}  \sum_{n_2,n_5 \geq 0} \frac{q^{n_2+n_5}}{1-q^{n_2+n_5+1}}=\frac{q^{-1}}{(q)_\infty^2} \sum_{n \geq 1} \frac{nq^n}{1-q^n},
\end{align*}
as claimed.
\end{proof}
Next we consider the graph series associated to the graph 
\begin{center}
\begin{tikzpicture}
  \node (n4) at (4.3,3)  {4};
  \node (n5) at (4,4)  {5};
  \node (n1) at (5,5) {1};
  \node (n2) at (6,4)  {2};
  \node (n3) at (5.6,3)  {3};
   \node (n6) at (9,4)  {6};
     \node (n7) at (7,4)  {7};
  \node (n8) at (8,4)  {8};
  \foreach \from/\to in {n4/n5,n5/n1,n1/n2,n2/n3,n3/n4,n2/n7,n7/n8,n8/n6,n3/n6,n1/n6}
    \draw (\from) -- (\to);
\end{tikzpicture}
\captionof{figure}{Graph $\Gamma_8$}
\end{center}

We need a result for the sum of squares of divisors. 
\begin{lemma}\label{bell}
We have
$$\sum_{\bm n \in \N_0^3} \frac{q^{n_1+n_2+n_3+1}}{(1-q^{n_1+n_2+1})(1-q^{n_1+n_2+n_3+1})}=\sum_{n \geq 1} \frac{n^2 q^n}{1-q^n}.
$$
\end{lemma}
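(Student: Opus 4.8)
The plan is to collapse two of the three summation indices into a double Lambert-type sum and then to match coefficients of $q^N$ with the classical series $\sum_{n\ge1}\frac{n^2q^n}{1-q^n}=\sum_{N\ge1}\sigma_2(N)q^N$, where $\sigma_k(N):=\sum_{d\mid N}d^k$.

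First I would substitute $j:=n_1+n_2+1$ and $k:=n_1+n_2+n_3+1$. For each pair of integers $1\le j\le k$ the value $n_3=k-j$ is forced, while $(n_1,n_2)$ ranges over the $j$ solutions of $n_1+n_2=j-1$; hence the left-hand side becomes
\[
\sum_{n_1,n_2,n_3\ge0}\frac{q^{n_1+n_2+n_3+1}}{(1-q^{n_1+n_2+1})(1-q^{n_1+n_2+n_3+1})}
=\sum_{1\le j\le k}\frac{j\,q^{k}}{(1-q^{j})(1-q^{k})}.
\]
(Equivalently, one may first sum the geometric series in $n_3$ and then in $n_1,n_2$ to get $\sum_{s\ge1}\frac{s}{1-q^{s}}\sum_{b\ge s}\frac{q^{b}}{1-q^{b}}$, and then interchange the two summations.)

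Next I would expand $\frac{j}{1-q^{j}}=j\sum_{a\ge0}q^{ja}$ and $\frac{q^{k}}{1-q^{k}}=\sum_{b\ge1}q^{kb}$, so that
\[
[q^{N}]\sum_{1\le j\le k}\frac{j\,q^{k}}{(1-q^{j})(1-q^{k})}=\sum_{1\le j\le k}j\cdot\#\bigl\{(a,b)\in\Z_{\ge0}\times\Z_{\ge1}: ja+kb=N\bigr\}.
\]
The terms with $a=0$ force $k\mid N$ with $j$ free in $\{1,\dots,k\}$, contributing $\sum_{k\mid N}\binom{k+1}{2}=\tfrac12\bigl(\sigma_2(N)+\sigma_1(N)\bigr)$. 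Thus the lemma reduces to the divisor identity
\[
\sum_{\substack{j,k,a,b\ge1\\ ja+kb=N,\ j\le k}} j \;=\; \tfrac12\bigl(\sigma_2(N)-\sigma_1(N)\bigr)\;=\;\sum_{d\mid N}\binom{d}{2}.
\]

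To prove this last identity I would use the involution $(j,a)\leftrightarrow(k,b)$ on the set of quadruples with $ja+kb=N$, which shows $\sum_{j\le k}j=\sum_{j\ge k}k$, so twice the left-hand side equals $\sum_{\mathrm{all}}\min(j,k)+\sum_{j=k}j$; the diagonal contribution is $\sum_{j\mid N}\sum_{a+b=N/j,\,a,b\ge1}j=Nd(N)-\sigma_1(N)$, and the identity $\sum_{\substack{j,k,a,b\ge1\\ ja+kb=N}}\min(j,k)=\sigma_2(N)-Nd(N)$ follows by writing $\min(j,k)=\sum_{m\ge1}[m\le j][m\le k]$ and a short count, or by pairing complementary divisors $\{d,N/d\}$ using $(d-N/d)^2=d^2+(N/d)^2-2N$. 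I expect the only genuinely nontrivial point to be this divisor identity; the reduction of the triple sum to $\sum_{1\le j\le k}\frac{j q^{k}}{(1-q^{j})(1-q^{k})}$ and the coefficient extraction are routine bookkeeping.
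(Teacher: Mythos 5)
Your reductions are all correct and in fact run parallel to the paper's own argument: the paper likewise rewrites the left-hand side as $\sum_{1\le \ell\le k}\frac{\ell q^{k}}{(1-q^{\ell})(1-q^{k})}$, splits off the same ``easy'' piece (your $a=0$ terms correspond to the paper's insertion of $q^{\ell}+1-q^{\ell}$), and symmetrizes to reach $\frac12\sum_{k,\ell\ge1}\frac{\min(k,\ell)q^{k+\ell}}{(1-q^{k})(1-q^{\ell})}$ plus a diagonal term. Your coefficient-of-$q^{N}$ bookkeeping — the evaluation $\sum_{k\mid N}\binom{k+1}{2}=\tfrac12(\sigma_2(N)+\sigma_1(N))$, the involution giving $2\sum_{j\le k}j=\sum\min(j,k)+\sum_{j=k}j$, and the diagonal count $Nd(N)-\sigma_1(N)$ — is all consistent and reduces the lemma to exactly the right target.

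The gap is the final step. The identity
\[
\sum_{\substack{j,k,a,b\ge1\\ ja+kb=N}}\min(j,k)=\sigma_2(N)-Nd(N)
\]
is precisely the coefficient form of the Lambert-series identity
\[
\sum_{k,\ell\ge1}\frac{\min(k,\ell)\,q^{k+\ell}}{\left(1-q^{k}\right)\left(1-q^{\ell}\right)}
=\sum_{n\ge1}\frac{n(n-1)q^{n}}{1-q^{n}}-\sum_{n\ge1}\frac{nq^{2n}}{\left(1-q^{n}\right)^{2}}
\]
that the paper imports from Andrews' \emph{Stacked lattice boxes} (equations (5.4) and (6.5) of \cite{stacks}); it carries essentially all the content of the lemma, and neither of your two hints proves it. Writing $\min(j,k)=\sum_{m\ge1}[m\le j][m\le k]$ turns the left side into $\sum_{m\ge1}(c_m\ast c_m)(N)$ with $c_m(n)=\#\{d\mid n:\ d\ge m\}$ and $\ast$ the additive convolution over $s+t=N$; there is no ``short count'' evaluating this — it is a Liouville-type divisor convolution whose elementary proofs require an actual bijection or a page of Lambert-series manipulation. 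Your second hint, $(d-N/d)^2=d^2+(N/d)^2-2N$, only rewrites the right-hand side as $\tfrac12\sum_{d\mid N}(d-N/d)^2$ and says nothing about the quadruple count on the left. To complete the argument you must either supply a genuine proof of this $\min$-identity or cite it, as the paper does; as written, the proposal asserts the one hard step.
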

\begin{proof}  We can write the left-hand side as 
\begin{align*}
\sum_{0 \leq \ell \leq k \atop k \geq 0} \frac{(\ell+1)q^{k+1}}{(1-q^{\ell+1})(1-q^{k+1})}
&=\sum_{1 \leq \ell \leq k \atop k \geq 1} \frac{\ell q^{k}}{(1-q^{\ell})(1-q^{k})} 
=\sum_{1 \leq \ell \leq k \atop k \geq 1} \frac{\ell q^{k}\lp q^\ell+1-q^\ell\rp}{\left(1-q^{k}\right)\left(1-q^{\ell}\right)}
\\
&=\sum_{1 \leq \ell \leq k \atop k \geq 1} \frac{\ell q^{k+\ell}}{(1-q^{\ell})(1-q^{k})}
+\frac 12\sum_{k \geq 1} \frac{k(k+1)q^k}{1-q^k}
\\
&=\frac12 \sum_{k, \ell \geq 1} \frac{{\rm min}(k,\ell)q^{k+\ell}}{(1-q^k)(1-q^\ell)}+\frac12 \sum_{k \geq 1} \frac{kq^{2k}}{(1-q^k)^2}+\frac 12\sum_{k \geq 1} \frac{k(k+1)q^k}{1-q^k}.
\end{align*}
Finally plugging in \cite[equations (5.4) and (6.5)]{stacks}
$$ \sum_{k, \ell \geq 1 } \frac{ {\rm min}(k,\ell) q^{k+\ell}}{(1-q^{k})(1-q^{\ell})}
=\sum_{n \geq 1} \frac{n(n-1)q^n}{1-q^n}-\sum_{n \geq 1} \frac{nq^{2n}}{(1-q^n)^2},$$
gives the claim.
\end{proof}

\begin{remark} 
Recall Bell's identity for the sum of squares of divisors \cite[equation (2.3)]{stacks}
$$
\sum_{n\ge1} \frac{q^n}{(1-q^n)^2} \left(\frac{1}{1-q} + \frac{1}{1-q^2} + \cdots + \frac{1}{1-q^n}\right)=\sum_{n \geq 1} \frac{n^2 q^n}{1-q^n}.
$$
Curiously, in Lemma \ref{bell} we prove a slightly different identity
$$\sum_{n\ge1} \frac{q^n}{1-q^n} \left(\frac{1}{1-q} +\frac{2}{1-q^2}+ \cdots + \frac{n}{1-q^n}\right)=\sum_{n \geq 1} \frac{n^2 q^n}{1-q^n}.$$
\end{remark}

Now we are ready to  prove \eqref{g8}.
\begin{proof}[Proof of (\ref{g8})]
We enumerate the vertices of $\Gamma_8$ as on Figure 1. 
We use \eqref{109} for $n_1$ to obtain
\begin{align*}
& \sum_{\bm{n}\in\N_0^8} \frac{q^{n_1n_2+n_1n_5+n_1n_6+n_2n_3+n_2n_7+n_3n_4+n_3n_6+n_4n_5+n_6n_8+n_7n_8+n_1+n_2+n_3+n_4+n_5+n_6+n_7+n_8}}
{(q)_{n_1}(q)_{n_2}(q)_{n_3}(q)_{n_4}(q)_{n_5}(q)_{n_6}(q)_{n_7}(q)_{n_8}} \\
& =\sum_{n_2,n_3,n_4,n_5,n_6,n_7,n_8 \geq 0} \frac{q^{n_2n_3+n_2n_7+n_3n_4+n_3n_6+n_4n_5+n_6n_8+n_7n_8+n_2+n_3+n_4+n_5+n_6+n_7+n_8}}
{(q^{n_2+n_5+n_6+1})_\infty (q)_{n_2}(q)_{n_3}(q)_{n_4}(q)_{n_5}(q)_{n_6}(q)_{n_7}(q)_{n_8}} \\
& =\frac{1}{(q)_\infty} \sum_{n_2,n_5,n_6,n_7,n_8 \geq 0} \frac{(q)_{n_2+n_5+n_6} q^{n_2n_7+n_6n_8+n_7n_8+n_2+n_5+n_6+n_7+n_8}}
{ (q)_{n_2} (q)_{n_5}(q)_{n_6}  (q)_{n_7}(q)_{n_8} } 
\\
	& \hspace{8cm}\times \sum_{n_3,n_4\ge0} \frac{q^{n_3n_4+(n_2+n_6+1)n_3+(n_5+1)n_4}}{(q)_{n_3}(q)_{n_4}} \\
& =  \frac{1}{(q)_\infty} \sum_{n_2,n_5,n_6,n_7,n_8 \geq 0} \frac{(q)_{n_2+n_5+n_6} q^{n_2n_7+n_6n_8+n_7n_8+n_2+n_5+n_6+n_7+n_8}}
{ (q^{n_5+1})_\infty (q^{n_2+n_6+1})_{n_5+1} (q)_{n_2} (q)_{n_5}(q)_{n_6} (q)_{n_7}(q)_{n_8}}\\
&=  \frac{1}{(q)^2_\infty} \sum_{n_2,n_5,n_6,n_7,n_8 \geq 0} \frac{(q)_{n_2+n_5+n_6} q^{n_2n_7+n_6n_8+n_7n_8+n_2+n_5+n_6+n_7+n_8}}
{ (q^{n_2+n_6+1})_{n_5+1} (q)_{n_2} (q)_{n_6} (q)_{n_7}(q)_{n_8}}
\end{align*}
using Lemma \ref{framing} in the penultimate step. Using \eqref{109}, we then rewrite this as 
\begin{align*}
& \frac{1}{(q)^2_\infty}  \sum_{n_2,n_5,n_6 \geq 0} \frac{(q)_{n_2+n_5+n_6} q^{n_2+n_5+n_6}}
{ (q)_{n_2} (q)_{n_6}  (q^{n_2+n_6+1})_{n_5+1} (q^{n_2+1})_\infty (q^{n_6+1})_{n_2+1}} \\
&=\frac{1}{(q)^3_\infty} \sum_{n_2,n_5,n_6 \geq 0} \frac{(q)_{n_2+n_5+n_6} q^{n_2+n_5+n_6}}
{ (q)_{n_6}  (q^{n_2+n_6+1})_{n_5+1} (q^{n_6+1})_{n_2+1}} \\
&=\frac{q^{-1}}{(q)^3_\infty} \sum_{n_2,n_5,n_6 \geq 0} \frac{ q^{n_2+n_5+n_6+1}}
{(1-q^{n_2+n_6+1})(1-q^{n_2+n_5+n_6+1})}
=\frac{q^{-1}}{(q)^3_\infty} \sum_{n \geq 1} \frac{n^2 q^n}{1-q^n},
\end{align*}
where the last equality is due to Lemma \ref{bell}.
\end{proof}



\section{$D_4$ graph series and the proof of \eqref{HD4}}

In this part we investigate graph series associated to the graph of type $D_4$:
$$
H_{D_4}(q) = \sum_{\bm n \in \N_0^4} \frac{q^{n_1 n_2 + n_1 n_3+n_1 n_4+ n_1+n_2+n_3+n_4}}{(q)_{n_1} (q)_{n_2} (q)_{n_3} (q)_{n_4}}.$$

We first obtain a representation for $H_{D_4}(q)$ using Appell--Lerch sums.
Using this result we then rewrite it as an indefinite theta function of signature $(1,1)$. We also discuss mock modular properties and the asymptotic behavior as $q \to 1^-$.
To view modularity properties of $H_{D_4}$, we use 
$$
I_1(q):=
\sum_{n \in \mathbb{Z} \setminus \{0\}}
\frac{(-1)^{n+1}q^{\frac{n(3n+1)}{2}}}{(1-q^n)^2}, \qquad
I_2(q):=\sum_{n \in \mathbb{Z} \setminus \{0\}} \frac{(-1)^{n+1} n q^{\frac{n(n+1)}{2}}}{1-q^n}.
$$
Moreover, we require the generating function for so-called ranks of strongly unimodal sequences, explicitly given by $U(-\zeta;q)$, where
\begin{gather*}
	U(\zeta;q) := \sum_{n\ge0} q^{n+1} (\zeta q)_n \left(\zeta^{-1}q\right)_n.
\end{gather*}

\begin{proposition}\label{prop:4.1} We have
\begin{equation*}
H_{D_4}(q)= \frac{q^{-1}}{(q)^4_\infty} \lp I_1(q) + \frac{1}{24}(1-E_2(\t))+I_2(q)\rp
=\frac{q^{-1}}{(q)_\infty^3} U(1;q).
\end{equation*}
\end{proposition}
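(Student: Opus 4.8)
The plan is to prove the chain $H_{D_4}(q)=\frac{q^{-1}}{(q)_\infty^{3}}U(1;q)=\frac{q^{-1}}{(q)_\infty^{4}}\big(I_1(q)+\frac1{24}(1-E_2(\tau))+I_2(q)\big)$, where the first equality is elementary and the second carries all the weight.

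\emph{First equality.} In the exponent of $H_{D_4}$ the variable $n_1$ is coupled linearly to each of $n_2,n_3,n_4$ and to nothing else, so for fixed $n_1$ the three inner sums are geometric. Applying Euler's identity \eqref{109} with $\zeta=q^{n_1+1}$ three times gives
\[
H_{D_4}(q)=\sum_{n_1\ge0}\frac{q^{n_1}}{(q)_{n_1}\,(q^{n_1+1};q)_\infty^{3}},
\]
and inserting $(q^{n_1+1};q)_\infty=(q)_\infty/(q)_{n_1}$ turns this into $(q)_\infty^{-3}\sum_{n\ge0}q^{n}(q)_n^{2}=q^{-1}(q)_\infty^{-3}U(1;q)$ by the definition of $U(\zeta;q)$.

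\emph{Second equality.} It remains to prove $(q)_\infty U(1;q)=I_1(q)+\frac1{24}(1-E_2(\tau))+I_2(q)$. I would first obtain, for generic $\zeta$, an Appell--Lerch-type representation of $U(\zeta;q)$ with denominators $(1-\zeta q^n)(1-\zeta^{-1}q^n)$ that is regular in $\zeta$ near $1$ apart from a single $n$ with a double pole; this is the known mock-modular representation of the rank generating function of strongly unimodal sequences. It can be derived either from a Bailey pair relative to $(1,q)$ whose $\beta_n$ carries the factor $(\zeta q)_n(\zeta^{-1}q)_n$, fed through Bailey's Lemma \ref{L:BaileysLemma}, or, more directly, by writing $(\zeta q)_n(\zeta^{-1}q)_n=(\zeta q)_\infty(\zeta^{-1}q)_\infty/\big((\zeta q^{n+1};q)_\infty(\zeta^{-1}q^{n+1};q)_\infty\big)$, partial-fractioning $1/\big((xq;q)_\infty(x^{-1}q;q)_\infty\big)$ over its poles $x=q^m$ ($m\in\Z\setminus\{0\}$), and summing the resulting geometric series in $n$. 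One then lets $\zeta\to1$: the factor $(1-\zeta q^n)(1-\zeta^{-1}q^n)$ has a double zero at $n=0$, so the limit is computed by l'H\^opital's rule, differentiating twice in $\zeta$, exactly in the manner in which \eqref{t-identity_1} is used in the proof of Lemma \ref{AL_E2}. The result splits into three pieces: the surviving sum over $n\ne0$ regroups, after pairing $n$ with $-n$, into $I_1(q)$; a piece of the shape $\sum_{n\ge1}(-1)^{n+1}(1+q^n)q^{n(n+1)/2}/(1-q^n)^2$ is exactly $\frac1{24}(1-E_2(\tau))$ by the second identity of Lemma \ref{AL_E2}; and the remaining piece, which is linear in $n$ because of the second derivative at the double pole, is $I_2(q)$.

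The main obstacle is pinning down the generic-$\zeta$ Appell--Lerch representation — the precise $\zeta$-powers and $q$-exponents in the numerator, together with the correction term that absorbs the $n=0$ pole — and then carrying out the $\zeta\to1$ limit cleanly, so that the quasimodular $E_2$-contribution separates off correctly from the two genuinely theta-type contributions $I_1$ and $I_2$. After that, only bookkeeping remains: the re-indexing $n\mapsto -n$ converting the one-sided sums into the symmetric sums $\sum_{n\in\Z\setminus\{0\}}$ that define $I_1$ and $I_2$ (note in particular $I_1(q)=\mathcal F(\tau)$, so that the mock-modularity asserted later follows from Proposition \ref{Duke}). If one prefers to sidestep Bailey's machinery, the $U(\zeta;q)$ formula may simply be quoted from the literature on strongly unimodal sequences, with only the (still essential) $\zeta\to1$ limit carried out here.
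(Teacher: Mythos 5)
Your first equality is exactly the paper's: three applications of Euler's identity \eqref{109} in $n_2,n_3,n_4$ give $H_{D_4}(q)=\sum_{n\ge0}q^n/\bigl((q)_n(q^{n+1})_\infty^3\bigr)=(q)_\infty^{-3}\sum_{n\ge0}q^n(q)_n^2=q^{-1}(q)_\infty^{-3}U(1;q)$, so there is nothing to add there.

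For the second equality the paper does much less than you propose: it simply quotes Andrews's identity (1.2) from \cite{Andrews1}, which already gives
\begin{equation*}
(q)_\infty U(1;q)=\sum_{n\ge1}\frac{(-1)^{n+1}(1+q^n)q^{\frac{n(3n+1)}{2}}}{(1-q^n)^2}-\sum_{n\ge1}\frac{q^n}{(1-q^n)^2}+2\sum_{n\ge1}\frac{(-1)^{n+1}nq^{\frac{n(n+1)}{2}}}{1-q^n},
\end{equation*}
and then only reindexes ($n\mapsto-n$) to fold the first and third sums into $I_1$ and $I_2$, identifying the Lambert series via Lemma \ref{AL_E2}. Your primary route --- deriving a generic-$\zeta$ Appell--Lerch representation of $U(\zeta;q)$ and performing the $\zeta\to1$ limit by l'H\^opital --- would in principle reprove Andrews's identity and is a legitimate, more self-contained alternative; but as written it is a plan rather than a proof: you yourself flag ``pinning down the generic-$\zeta$ representation'' and ``carrying out the $\zeta\to1$ limit cleanly'' as the main obstacle and do not resolve it, and that is precisely where all of the content of the second equality lives. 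Taken on its own, then, your argument has a genuine gap at its central step; your fallback of quoting the representation from the literature is exactly what the paper does, and since Andrews states the identity already at $\zeta=1$, no limiting argument is needed at all. Two smaller points: your identification $I_1(q)=\mathcal{F}(\tau)$ is correct (pair $n$ with $-n$ in $\mathcal{F}$); but be careful with the sign of the quasimodular piece --- the Lambert series enters Andrews's identity with a minus sign, so its contribution is $\frac{1}{24}(E_2(\tau)-1)$, whereas your sketch (and the displayed statement of the Proposition, which is inconsistent with the paper's own proof on this point) asserts the opposite sign without deriving it.
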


\begin{proof}
We use Euler's identity \eqref{109} three times (for $n_2,n_3,$ and $n_4$) to write 
\[
H_{D_4}(q)
=
	\sum_{n\ge0}
	\frac{q^{n}}
	{(q)_{n} (q^{n+1})_{\infty}^3}
=
	\frac{1}{(q)_\infty^3}	
	\sum_{n\ge0}
	q^{n}(q)_n^2 
=
	\frac{q^{-1}}{(q)_\infty^3}	
	U(1;q).
\]
To see the first identity, we use an identity of Andrews \cite[equation (1.2)]{Andrews1}, giving 
\begin{align*}
U(1;q)
&=
	\frac{1}{(q)_\infty}\left(
		\sum_{n\geq 1} 
		\frac{(-1)^{n+1}(1+q^n)q^{\frac{n(3n+1)}{2}}}{(1-q^n)^2}
		-
		\sum_{n\geq 1} \frac{q^n}{(1-q^n)^2}
		+
		2\sum_{n\geq 1} \frac{(-1)^{n+1} n q^{\frac{n(n+1)}{2}}  }
			{1-q^n}
	\right).
\end{align*}
Rewriting the first and the third sum and using that $\sum_{n \geq 1} \frac{q^n}{(1-q^n)^2}=\frac{1-E_2(\tau)}{24}$ yields the claim.
\end{proof}
We next prove the indefinite theta function representation of $H_{D_4}$ as stated in \eqref{HD4}. We note that similar formulas already exist in the literature; see for instance \cite[Theorem 1.5]{BOPR}. 
\begin{proof}[Proof of \eqref{HD4}]
	We have  
	\begin{align*}
	(q)_\infty  U(1;q) &=
		\sum_{n\geq 1} 
		\frac{(-1)^{n+1}(1+q^n)q^{\frac{n(3n+1)}{2}}}{(1-q^n)^2}
		-
		\sum_{n\geq 1} \frac{q^n}{(1-q^n)^2}
		+
		2\sum_{n\geq 1} \frac{(-1)^{n+1} n q^{\frac{n(n+1)}{2}}  }
			{1-q^n}.
\end{align*}
	We first apply Lemma \ref{AL_E2}
	to combine the first and second sum in $F(q)$ to obtain
\begin{align*}
	&(q)_\infty U(1;q)=-\sum_{n\geq 1} 
		\frac{(-1)^{n+1}\left(1+q^n\right)q^{\frac{n(n+1)}{2}}\left(1-q^{n^2}\right)}{(1-q^n)^2}+ 2 \sum_{n \geq 1} \frac{(-1)^{n+1} n q^{\frac{n(n+1)}{2}}}{1-q^n}.
		\end{align*}
		Next we use the identity:
\begin{equation*} 
-\frac{(1+q^n)\left(1-q^{n^2}\right)}{(1-q^n)^2}=-\sum_{m=0}^{n-1}(2m+1) q^{nm}- 2n \sum_{m \geq n}  q^{nm},
\end{equation*}
which follows from expanding the left-hand side as a geometric series.
Plugging this in, we have
\begin{align*}	
	 (q)_\infty U(1;q) &=\sum_{n\geq1} (-1)^n q^{\frac{n(n+1)}{2}} \lp \sum_{m=0}^{n-1} (2m+1)q^{nm}+2n \sum_{m \geq n}q^{nm} \rp
	+\sum_{n\geq1} \sum_{m\geq0} (-1)^{n+1} 2n q^{\frac{n(n+1)}{2}+nm}
	\\
	&=\sum_{n\geq1} (-1)^n q^{\frac{n(n+1)}{2}} \sum_{m=0}^{n-1} (2m+1-2n) q^{nm}
	.
	\end{align*}
	The claim now follows by changing $n \mapsto n+m+1$ and using Proposition \ref{prop:4.1}.
\end{proof}



\begin{remark}
Quantum modular and mock modular properties of $U(1;q)$ are well-understood and therefore follow for $H_{D_4}(q)$.
In particular, this gives (as $t \to 0^+$)
$$
\lp e^{-t} \rp_\infty^3 e^{-\frac{23t}{24}} H_{D_4}\lp e^{-t} \rp =  \sum_{n \geq 0} \frac{T_n}{n!} \left(\frac{-t}{24}\right)^n,
$$
where $T_n$ are Glaisher's numbers \cite[Theorem 1]{BOPR}. 
Moreover, by \cite{Rh}, $(q)_\infty^3 q^{\frac{1}{24}} H_{D_4}(q)$ is a mixed mock modular form.
\end{remark}

\begin{remark}[$\ell$-star graphs]
It is worth noting that for every $\ell$-star graph $X_\ell$, $\ell \geq 3$, we can write 
\begin{equation} \label{l-star}
H_{X_\ell}(q)=\frac{1}{(q)_\infty^\ell} \sum_{n \geq 0} q^n (q)_n^{\ell-1}.
\end{equation}
For $\ell=2$ we obtain the $A_3$-graph function discussed  in Section 3,
via (\ref{tail}). However for $\ell > 3$, we are not aware of any Appell--Lerch type series representation for the sum in (\ref{l-star}).  It would be interesting to determine their quantum modular properties.
\end{remark}

\section{$D_5$ graph series and the proof of \eqref{HD5.2}}

In this section we consider the graph series of type $D_5$:
\begin{equation}\label{HD5}
	H_{D_5}(q)=\sum_{\bm n \in\N_0^5} \frac{q^{n_1 n_2 + n_1 n_3+n_1 n_4+n_4n_5+ n_1+n_2+n_3+n_4+n_5}}{(q)_{n_1} (q)_{n_2} (q)_{n_3} (q)_{n_4}(q)_{n_5}}.
\end{equation}
Our first result is the following Lerch-type sum representation.

\begin{proposition}\label{prop:5.1} We have 
\begin{align*}
H_{D_5}(q)
&=
	\frac{q^{-1}}{(q)_\infty^3}
	\sum_{n\geq 1}
	\frac{(-1)^{n+1}  \lp 1+q^n\rp q^{\frac{n(n+1)}{2}} \left(1-q^{n^2}\right) }{\left(1-q^n\right)^2}.
\end{align*}

\end{proposition}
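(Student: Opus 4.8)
The plan is to collapse $H_{D_5}(q)$ to a single sum by iterating Euler's identity \eqref{109}, and then to identify that sum with the claimed expression by a $q$-series identity in the spirit of the one used for $U(1;q)$ in the proof of Proposition \ref{prop:4.1}. First I would sum over the three ``leaf'' variables $n_2,n_3,n_5$: since $n_2$ enters only through $q^{n_1n_2+n_2}=(q^{n_1+1})^{n_2}$, \eqref{109} gives $\sum_{n_2\ge0}\frac{q^{(n_1+1)n_2}}{(q)_{n_2}}=\frac{1}{(q^{n_1+1})_\infty}$, and likewise for $n_3$, while summing over $n_5$ produces $\frac{1}{(q^{n_4+1})_\infty}$. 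Writing $(q^{m+1})_\infty=(q)_\infty/(q)_m$, the prefactor collapses to $\frac{(q)_{n_1}}{(q)_\infty^3}$, so $H_{D_5}(q)=\frac{1}{(q)_\infty^3}\sum_{n_1,n_4\ge0}(q)_{n_1}q^{n_1n_4+n_1+n_4}$. The $n_4$-sum is geometric (its exponent is $(n_1+1)n_4$), giving $H_{D_5}(q)=\frac{1}{(q)_\infty^3}\sum_{n\ge0}\frac{(q)_nq^n}{1-q^{n+1}}$; re-indexing $n\mapsto n-1$ and using $(q)_{n-1}=(q)_n/(1-q^n)$ turns this into $\frac{q^{-1}}{(q)_\infty^3}\sum_{n\ge1}\frac{(q)_nq^n}{(1-q^n)^2}$. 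This reduction is routine.

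It then remains to prove the identity
\[
\sum_{n\ge1}\frac{(q)_nq^n}{(1-q^n)^2}=\sum_{n\ge1}\frac{(-1)^{n+1}(1+q^n)q^{\frac{n(n+1)}{2}}(1-q^{n^2})}{(1-q^n)^2}.
\]
Expanding $1-q^{n^2}$ and invoking Lemma \ref{AL_E2}, the right-hand side equals $\sum_{n\ge1}\frac{q^n}{(1-q^n)^2}-\sum_{n\ge1}\frac{(-1)^{n+1}(1+q^n)q^{\frac{n(3n+1)}{2}}}{(1-q^n)^2}=\frac{1-E_2(\tau)}{24}-I_1(q)$, i.e., precisely the combination of the first two of the three sums appearing in Andrews' evaluation of $U(1;q)$ recalled in the proof of Proposition \ref{prop:4.1}. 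I would establish this either (i) by Bailey's Lemma (Lemma \ref{L:BaileysLemma}) applied to a dilation of the Bailey pair relative to $(1,q)$ underlying Andrews' identity of \cite{Andrews1}, or (ii) more elementarily, by inserting an auxiliary variable $\zeta$: write the left-hand side as the limit $\zeta\to1$ of $\sum_{n\ge1}\frac{(\zeta q)_{n-1}q^n}{1-\zeta q^n}$, expand $\frac{1}{1-\zeta q^n}$ geometrically, sum the inner series using $\sum_{n\ge0}(q)_nz^n=\sum_{n\ge0}\frac{(-1)^nz^nq^{\frac{n(n+1)}{2}}}{(z;q)_{n+1}}$ (immediate from the functional equation $(1-z)\sum_{n\ge0}(q)_nz^n=1-zq\sum_{n\ge0}(q)_n(zq)^n$), and reduce what remains to the identities \eqref{id1}, \eqref{AGL}, and \eqref{fine} collected in Section 2.

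The main obstacle is this last identity. Unlike the $D_4$ case, the reduced sum $\sum_{n\ge1}\frac{(q)_nq^n}{(1-q^n)^2}$ is not literally an Andrews unimodal-type sum, and its equality with $\frac{1-E_2(\tau)}{24}-I_1(q)$ does not hold term by term — the individual summands indexed by $n$ differ, so genuine cancellation across the summation index is required, and controlling it in closed form is the technical heart of the proof. A convenient consistency check is that, upon combining with Andrews' identity for $U(1;q)$ from the proof of Proposition \ref{prop:4.1}, the claim becomes the compact relation $q\,(q)_\infty^3H_{D_5}(q)+(q)_\infty U(1;q)=I_2(q)$, which isolates exactly which part of the $D_4$ computation survives.
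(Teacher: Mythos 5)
Your reduction is exactly the paper's: three applications of Euler's identity and a geometric sum give $H_{D_5}(q)=\frac{1}{(q)_\infty^3}\sum_{n\ge0}\frac{q^n(q)_n}{1-q^{n+1}}$, and your reformulation of the remaining task as $\sum_{n\ge1}\frac{(q)_nq^n}{(1-q^n)^2}=\frac{1-E_2(\tau)}{24}-I_1(q)$ (equivalently, $q(q)_\infty^3H_{D_5}(q)+(q)_\infty U(1;q)=I_2(q)$) is a correct and genuinely useful observation. But the proof stops there: you yourself flag this identity as ``the main obstacle'' and ``the technical heart of the proof,'' and neither of your two proposed routes is carried out. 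Route (i) gestures at the right machinery but does not identify a Bailey pair that works, and route (ii) is a chain of unverified manipulations. As it stands, the central identity is asserted, not proved, so there is a genuine gap.

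For the record, the paper closes exactly this gap with a concrete Bailey-pair computation: specializing Lovejoy's Theorem 8 with $a=q^2$, $d=b^2\rightarrow0$, $c=q$ yields the Bailey pair relative to $(q^2,q)$ given by
\begin{equation*}
\alpha_n = \frac{(-1)^n q^{\frac{n(n+1)}{2}}\left(1+q^{n+1}\right)\left(1-q^{(n+1)^2}\right)}{1-q^2},\qquad \beta_n=\frac{1}{\left(q^2\right)_n},
\end{equation*}
and inserting this into Bailey's Lemma (Lemma \ref{L:BaileysLemma}) with $\varrho_1=\varrho_2=q$ evaluates $\sum_{n\ge0}\frac{q^n(q)_n}{1-q^{n+1}}$ directly as $-q^{-1}\sum_{n\ge1}\frac{(-1)^n(1+q^n)q^{\frac{n(n+1)}{2}}(1-q^{n^2})}{(1-q^n)^2}$ after shifting $n\mapsto n-1$. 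Note that the relevant pair is relative to $(q^2,q)$, not a dilation of a pair relative to $(1,q)$ as you suggest; the shift in the base parameter is what produces the $\frac{1}{1-q^{n+1}}$ (hence, after reindexing, the $(1-q^n)^{-2}$) on the $\beta$-side. If you want to salvage your route (ii), you would still need to supply the cross-index cancellation you correctly identify as nontrivial; the Bailey-pair route avoids it entirely.
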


\begin{proof}
To begin we use \eqref{HD5}, and apply \eqref{109} three times to write 
\begin{equation}\label{rewriteHD5}
H_{D_5}(q) = \frac{1}{(q)_\infty^3} \sum_{n_1,n_4\ge0} q^{n_1n_4+n_1+n_4} (q)_{n_1}
= \frac{1}{(q)_\infty^3} \sum_{n\ge0} \frac{q^{n} (q)_{n}}{1-q^{n+1}}.
\end{equation}
Using \cite[Theorem 8]{Lovejoy1}, with $a=q^2$, $d=b^2\rightarrow0$, and $c=q$ and a direct calculation yields that the following are a Bailey pair relative to $(q^2,q)$, 
\[
\alpha_n := \frac{(-1)^n q^{\frac{n(n+1)}{2}} \left(1+q^{n+1}\right) \left(1-q^{n^2+2n+1}\right)}{1-q^2},\qquad
\beta_n := \frac{1}{\left(q^2\right)_n}.
\]
Inserting this Bailey pair into Lemma \ref{L:BaileysLemma}, 
with $\varrho_1=\varrho_2=q$, yields
\begin{align*}
\sum_{n\ge0}
\frac{q^n (q)_{n}}{1-q^{n+1}}
&=
	\frac{1}{1-q}
	\sum_{n\ge0}
	q^n (q)_{n}^2  \beta_n
=
	\frac{\left(q^2\right)_\infty^2}{(1-q)\left(q^3,q\right)_\infty}
	\sum_{n\ge0}
	\frac{q^n (q)_n^2  \alpha_n }{\left(q^2\right)_n^2}
\\
&=
	\sum_{n\ge0}
	\frac{(-1)^n\left(1+q^{n+1}\right) q^{\frac{n^2+3n}{2}} \left(1-q^{n^2+2n+1}\right)}
	{\left(1-q^{n+1}\right)^2}
	\\
&=
	-q^{-1}
	\sum_{n\ge1}
	\frac{(-1)^n\left(1+q^n\right) q^{\frac{n(n+1)}{2}} \left(1-q^{n^2}\right)}
	{\left(1-q^{n}\right)^2}
.
\end{align*}
Plugging into \eqref{rewriteHD5} gives the claim.
\end{proof}

The next result gives an indefinite theta function representation for the series of interest, proving \eqref{HD5.2}.

\begin{theorem}\label{theorem:5.1}
	We have
	\begin{multline*}
	\sum_{n\geq1} \frac{(-1)^n  \left(1+q^n\right) q^{\frac{n(n+1)}{2}} \left(1-q^{n^2}\right)}{\left(1-q^n\right)^2}\\
	= -\frac{q}{(q)^2_\infty} \lp \sum_{n,m \geq 0} - \sum_{n,m < 0} \rp (-1)^n (n+1)^2 q^{\frac{n^2+3n}{2} +3nm+3m^2+4m}.
\end{multline*}
\end{theorem}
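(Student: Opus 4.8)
First I would put the left-hand side in closed form. Distributing $1-q^{n^2}$ splits it as $T_A-T_B$, where $T_A:=\sum_{n\geq1}\frac{(-1)^n(1+q^n)q^{\frac{n(n+1)}{2}}}{(1-q^n)^2}$ and $T_B:=\sum_{n\geq1}\frac{(-1)^n(1+q^n)q^{\frac{3n^2+n}{2}}}{(1-q^n)^2}$. By Lemma \ref{AL_E2} we get $T_A=\tfrac{E_2(\tau)-1}{24}$, and folding $n\mapsto-n$ in the bilateral definition of $\mathcal F$ (combining its $n>0$ and $n<0$ parts) shows $\mathcal F(\tau)=\sum_{n\geq1}\frac{(-1)^{n+1}(1+q^n)q^{\frac{3n^2+n}{2}}}{(1-q^n)^2}$, that is, $T_B=-\mathcal F(\tau)$. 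Hence the left-hand side equals $\tfrac{E_2(\tau)-1}{24}+\mathcal F(\tau)$; this already makes the mixed mock modularity transparent ($E_2$ is quasimodular and $\mathcal F$ is mock modular, with completion $\widehat{\mathcal F}$ as in Proposition \ref{Duke}), and it reduces the theorem to showing that $-\tfrac{(q)_\infty^2}{q}\bigl(\tfrac{E_2(\tau)-1}{24}+\mathcal F(\tau)\bigr)$ equals $\bigl(\sum_{n,m\geq0}-\sum_{n,m<0}\bigr)(-1)^n(n+1)^2q^{\frac{n^2+3n}{2}+3nm+3m^2+4m}$.

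For the indefinite theta representation I would proceed as in the proof of \eqref{HD4}, with a different manipulation of the same summand: the geometric-series expansion $\frac{(1+q^n)(1-q^{n^2})}{(1-q^n)^2}=\sum_{k=0}^{n-1}(2k+1)q^{nk}+2n\sum_{k\geq n}q^{nk}$ recalled there rewrites the left-hand side as a sum of two Hecke-type series of signature $(1,1)$, namely $\sum_{n\geq1}\sum_{0\leq k\leq n-1}(-1)^n(2k+1)q^{\frac{n(n+1)}{2}+nk}$ and $\sum_{n\geq1}\sum_{k\geq0}2n(-1)^nq^{\frac{3n^2+n}{2}+nk}$, each supported on a region where the relevant indefinite quadratic form is positive; reindexing by shifts $n\mapsto n+k+1$ puts them in compatible coordinates $(n,m)$. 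The delicate point is that the target carries the prefactor $\tfrac1{(q)_\infty^2}$ and that its quadratic form $\tfrac{n^2+3n}{2}+3nm+3m^2+4m$ has Gram determinant $-3$, whereas the two Hecke pieces above have determinant $-1$, so reindexing alone cannot finish the job. The $\tfrac1{(q)_\infty^2}$ (and the passage to the determinant $-3$ form) must come from an additional input: either one multiplies through by $(q)_\infty^2$ and collapses the resulting multiple sum using Euler's pentagonal number theorem together with a Jacobi triple product identity, or — probably cleaner — one re-derives the double sum by applying Bailey's Lemma (Lemma \ref{L:BaileysLemma}) to $\sum_{n\geq0}\frac{q^n(q)_n}{1-q^{n+1}}$, to which the left-hand side is proportional by \eqref{rewriteHD5}, with a Bailey pair whose $\alpha_n$ is a theta coefficient, so that the Bailey prefactor together with a re-expansion of the surviving $q$-Pochhammers by Euler's identity \eqref{109} simultaneously accounts for $\tfrac1{(q)_\infty^2}$ and produces the second summation variable.

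Once a single Hecke-type series $\tfrac1{(q)_\infty^2}\sum_{n,m\geq0}(-1)^n(n+1)^2q^{\frac{n^2+3n}{2}+3nm+3m^2+4m}$ over a quadrant is in hand, I would apply the involution $(n,m)\mapsto(-n-1,-m-1)$, under which $\tfrac{n^2+3n}{2}+3nm+3m^2+4m$ is preserved up to the prescribed linear shifts while $(-1)^n(n+1)^2$ changes sign; this turns the quadrant sum into $\bigl(\sum_{n,m\geq0}-\sum_{n,m<0}\bigr)$, the shape of a Zwegers-type indefinite theta of signature $(1,1)$ as in Proposition \ref{prop:elliptic}. Matching the overall power of $q$ and the sign then completes the proof, and the mixed mock modularity asserted in the theorem follows from the modular transformation of $\Theta_A$ together with the completion $\widehat{\mathcal F}$. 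The single real obstacle is the middle step: finding the Bailey pair (or the product-to-sum theta identity) that both accounts for $\tfrac1{(q)_\infty^2}$ and merges the two determinant $-1$ Hecke sums into the determinant $-3$ form $\tfrac{n^2+3n}{2}+3nm+3m^2+4m$; the rest is routine geometric-series manipulation, reindexing, and bookkeeping.
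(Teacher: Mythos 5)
Your opening reduction is correct and matches the paper: by Lemma \ref{AL_E2} and the folding $n\mapsto -n$, the left-hand side equals $\mathcal F(\tau)+\frac{E_2(\tau)-1}{24}$. But the actual content of the theorem --- the identity between this expression and the signature $(1,1)$ theta quotient on the right --- is not established by your argument. You correctly diagnose that the geometric-series expansion produces Hecke-type pieces attached to determinant $-1$ quadratic forms, that the target form $\frac{n^2+3n}{2}+3nm+3m^2+4m$ has determinant $-3$ and carries the prefactor $\frac{1}{(q)_\infty^2}$, and that ``reindexing alone cannot finish the job''; but the two remedies you sketch (pentagonal number theorem plus Jacobi triple product, or an unspecified Bailey pair with theta-coefficient $\alpha_n$) are left entirely unexecuted, and you yourself label this ``the single real obstacle.'' A proof that names its central obstacle and does not overcome it is incomplete. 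Your closing step is also unsound as stated: under $(n,m)\mapsto(-n-1,-m-1)$ the exponent $\frac{n^2+3n}{2}+3nm+3m^2+4m$ is \emph{not} preserved (it shifts by $n+m+1$), and the weight $(-1)^n(n+1)^2$ becomes $(-1)^{n+1}n^2$ rather than merely changing sign, so the quadrant sum does not fold onto $\bigl(\sum_{n,m\geq0}-\sum_{n,m<0}\bigr)$ in the way you describe.

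The paper closes the gap by a mechanism your proposal does not contain: it never converts one side into the other by $q$-series manipulation at all. Instead it completes both sides --- $\widehat{\mathcal L}=\widehat{\mathcal F}-\frac{E_2}{12}$ on the left, and on the right a completion $\widehat{\mathcal R}$ built from the second Taylor coefficient in $z$ of the Zwegers theta function $\Theta^{\bm{c_1},\bm{c_2}}_{\left(\begin{smallmatrix}1&3\\3&6\end{smallmatrix}\right)}$ divided by $\eta(\tau)^2$ --- and then verifies (i) that the two non-holomorphic corrections coincide and (ii) that both completions transform identically in weight $2$ under $\mathrm{SL}_2(\Z)$, with the same $\frac{i\tau}{2\pi}$ anomaly under inversion. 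Hence $\mathcal R-\mathcal L$ is a weight-two modular form on $\mathrm{SL}_2(\Z)$ with no growth, and therefore vanishes. To repair your write-up you must either actually exhibit the Bailey pair or product-to-sum identity you allude to, or adopt this completion-and-compare-transformations strategy.
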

\begin{proof}
The idea is to view this as an identity between modular forms in trivial spaces. For this, denote the left-hand side by $\mathcal{L}(\t)$ and write, using Lemma \ref{AL_E2},
\begin{align*}
	\sum_{n\geq 1} \frac{(-1)^n  \lp 1+q^n \rp q^{\frac{n(n+1)}{2}}}{\lp 1-q^n \rp^2}
	&= \frac{E_2(\t)-1}{24}
	, \\
	-\sum_{n\geq 1} \frac{(-1)^n  \lp 1+q^n \rp q^{\frac{n(3n+1)}{2}}}{\lp 1-q^n \rp^2}
	&=\sum_{n\in\Z \setminus \{0\}} \frac{(-1)^{n+1} q^{\frac{n(3n+1)}{2}}}{\lp 1-q^n\rp^2}
	=\calF(\tau). 
\end{align*}
Thus 
\begin{align*}
	\mathcal{L}(\tau)=\calF(\tau) + \frac{E_2(\t)-1}{24}= \lp \mathcal{F}(\tau)- \frac{1}{24}+\frac{E_2(\tau)}{8} \rp - \frac{E_2(\tau)}{12} .
\end{align*}
Define
\begin{align*}
	\widehat{\mathcal{L}}(\tau):= \wh{\calF}(\t) - \frac{E_2(\t)}{12}.
\end{align*}

We next look at the right-hand side of Theorem \ref{theorem:5.1}, which we denote by $\mathcal{R}(\t)$, and write
\begin{align*}
	\frac 12 \sum_{n,m\in \Z} &\lp \sgn\lp n+ \frac 12 \rp + \sgn \lp m+ \frac 12 \rp \rp (-1)^n (n+1)^2 q^{\frac{n^2+3n}{2} +3nm+3m^2+4m}
	\\
	&=  - \frac{q^{-1}}{2}  \sum_{\bm{n}\in\Z^2} \left(\sgn\left(n_1-\frac 12\right) + \sgn\left(n_2+\frac 12\right)\right) (-1)^{n_1} n_1^2 q^{\frac{n_1^2}{2} + \frac{n_1}{2} + 3n_1n_2 + 3n_2^2 + n_2}
	\\
	& =  \frac{q^{-1}}{8\pi^2} \left[ \frac{\partial^2}{\partial z^2}
	\sum_{\bm{n}\in\Z^2}
	\left(\sgn\left(n_1-\frac 12\right) + \sgn\left(n_2+\frac 12\right)\right) e^{2\pi i\left(n_1\left(\frac 12 +z+\frac \tau2 \right)+n_2\tau\right)} 
%
%
	q^{Q(\bm{n})}
	\right]_{z=0},
\end{align*}
where
$
Q(\bm{n}) := \frac{n_1^2}{2}+3n_1n_2+3n_2^2.
$ Setting $\bm{z} := (-2z,\frac \tau6 +\frac 16 +z)$, $\bm{c_1} := (-2,1)$, $\bm{c_2} := (-1,\frac 13)$, and choosing $y>0$ sufficiently small, we may write
\begin{multline*}
	\sum_{\bm{n}\in\Z^2} \left(\sgn\left(n_1-\frac 12\right) + \sgn\left(n_2+\frac 12\right)\right) e^{2\pi i\left(n_1\left(\frac 12+z+\frac \tau2\right)+n_2\tau\right)} q^{Q(\bm{n})}\\
	= \sum_{\bm{n}\in\Z^2} \left(\sgn\left(B\left(\bm{n}+\frac{\text{Im}(\bm{z})}{v},\bm{c_1}\right)\right) - \sgn\left(B\left(\bm{n}+\frac{\text{Im}(\bm{z})}{v},\bm{c_2}\right)\right)\right) e^{2\pi iB(\bm{n},\bm{z})} q^{Q(\bm{n})}.
\end{multline*}
Define
\begin{align*}
	\Phi(z;\tau) :&= \Theta^{\bm{c_1},\bm{c_2}}_{\left(\begin{smallmatrix} 1&3\\3&6\end{smallmatrix}\right)} (\bm z; \tau), \quad 
	\mathcal{G} (\tau) := \frac{q^{\frac{1}{12}}}{8\pi^2} \left[\frac{\partial^2}{\partial z^2} \Phi (z;\tau) \right]_{z=0},\quad 
	\wh{\mathcal{R}}(\t) := -\frac{\calG(\t)}{\eta(\t)^2}.
\end{align*}
We first show that
\begin{equation}\label{mRm}
	\calL^-(\t) := \wh{\calL}(\t) - \calL(\t) = \wh{\calR}(\t) - \calR(\t) =: \calR^-(\t).
\end{equation}
For this, we rewrite the right-hand side of this identity. We start by computing
\begin{multline}\label{diffP}
	\left[ \frac{\partial^2}{\partial z^2} \Phi (z;\tau)\right]_{z=0}
	=\sum_{\bm n \in \Z^2} (-1)^{n_1}   q^{\frac{n_1^2}{2} + \frac{n_1}{2} + 3n_1n_2 + 3n_2^2 +n_2}
	\\ 
	\times \left[ \frac{\partial^2}{\partial z^2} \lp \lp E \lp \lp n_1-\frac{2y}{v} \rp \sqrt{v} \rp
	+E \lp \lp n_2 + \frac 16 + \frac yv \rp \sqrt{6v} \rp \rp  e^{2\pi i n_1z} \rp \right]_{z=0}.
\end{multline}
For the first summand, write
\begin{align*}
	\frac{n_1^2}{2} + \frac{n_1}{2} + 3n_1n_2 + 3n_2^2 +n_2=3\lp n_2+\frac{n_1}{2} \rp^2 + \lp n_2+ \frac{n_1}{2} \rp - \frac{n_1^2}{4},
\end{align*}
make the change of variables $n_1\mapsto2n_1+\delta$, $\delta \in \{0,1\}$, $n_1\in\Z$, and then let $n_2 \mapsto n_2-n_1$. Using the second identity in \eqref{identity1.1}, the contribution to $\calR^-$ is 
\begin{multline*}
	-\frac{1}{\sqrt{\pi}} \sum_{\bm n \in \Z^2} (-1)^{n_1} f(n_1) q^{3\lp n_2+\frac{n_1}{2} \rp^2 + \lp n_2+\frac{n_1}{2} \rp - \frac{n_1^2}{4}}\\
	=-\frac{1}{\sqrt{\pi}} \sum_{\delta \in \{0,1\}} (-1)^\delta \sum_{n_1\in\Z} f(2n_1+\delta) q^{-\frac 14 (2n_1+\delta)^2} \sum_{n_2 \in\Z} q^{3\left(n_2+\frac \delta2\right)^2+n_2+\frac \delta2},
\end{multline*}
where
\begin{align*}
	f(n):= \left[ \frac{\partial^2}{\del z^2} \lp \sgn \lp n-\frac{2y}{v} \rp \Gamma \lp \frac 12 , \pi \lp n - \frac{2y}{v} \rp^2 \rp e^{2\pi inz} \rp \right]_{z=0}.
\end{align*}
By changing $n_1\mapsto-n_1-\delta$ one sees that the sum on $n_1$ vanishes (since $f$ is an odd function). 

For the second term in \eqref{diffP}, write 
\begin{align*}
	\frac{n_1^2}{2} + \frac{n_1}{2} + 3n_1n_2 + 3n_2^2 +n_2
	=\frac 12 \lp n_1+3n_2\rp^2 + \frac 12 \lp n_1+ 3n_2 \rp - \frac{3n_2^2}{2}- \frac{n_2}{2}.
\end{align*}
Then we shift $n_1 \mapsto n_1-3n_2$ to obtain that the contribution of the second term to $\calR^-$ is
\begin{multline}\label{second}
	-\frac{1}{\sqrt{\pi}} \sum_{\bm n \in \Z^2} (-1)^{n_1} g(n_1,n_2) q^{\frac 12 (n_1+3n_2)^2+ \frac 12 (n_1+3n_2) - \frac{3n_2^2}{2}-\frac{n_2}{2}}
	\\
	=-\frac{1}{\sqrt{\pi}} \sum_{\bm n \in\Z^2} (-1)^{n_1+n_2} g(n_1-3n_2,n_2) q^{\frac{n_1^2}{2}+ \frac{n_1}{2} - \frac{3n_2^2}{2} - \frac{n_2}{2}},
\end{multline}
where
\begin{align*}
	g(\bm{n}):= \left[ \frac{\del^2}{\del z^2} \lp \sgn \lp n_2+\frac 16+ \frac yv \rp \Gamma \lp \frac 12 , 6\pi \lp n_2+\frac 16 + \frac yv \rp^2 v \rp e^{2\pi i n_1 z} \rp  \right]_{z=0}.
\end{align*}
Making the change of variables $n_1\mapsto-n_1-1$, we see that
\begin{align*}
	\sum_{n_1 \in\Z} (-1)^{n_1} h\lp n_1+\frac 12 \rp q^{\frac{n_1(n_1+1)}{2}}=0
\end{align*}
for any even function $h$.
Thus we obtain that \eqref{second} equals
\begin{multline*}
	- 4\sqrt{\pi}i \sum_{n_1\in\Z} (-1)^{n_1} \lp n_1 + \frac 12 \rp q^{\frac{n_1(n_1+1)}{2}} 
	\\ \times
	\sum_{n_2\in\Z} (-1)^{n_2} \sgn \lp n_2+ \frac 16 \rp q^{-\frac{3n_2^2}{2}-\frac{n_2}{2}} 
	\left[ \frac{\del}{\del z} \lp  \Gamma \lp \frac 12 , 6\pi \lp n_2+\frac 16 + \frac yv \rp^2 v \rp e^{-6\pi i \lp n_2 +\frac 16 \rp z} \rp  \right]_{z=0}.
\end{multline*}
The sum on $n_1$ is
\begin{align*}
	q^{-\frac{1}{8}} \sum_{n_1 \geq 0} (-1)^{n_1} (2n_1+1) q^{\frac{1}{8}(2n_1+1)^2} = q^{-\frac{1}{8}} \eta(\t)^3.
\end{align*}
The sum on $n_2$ is
\begin{equation}\label{sumn2}
	q^{\frac{1}{24}} \sum_{n_2 \in \Z+\frac 16} (-1)^{n_2-\frac 16} \sgn (n_2) q^{-\frac{3n_2^2}{2}}
	\left[ \frac{\del}{\del z} \lp  \Gamma \lp \frac 12 , 6\pi \lp n_2+\frac yv \rp^2 v \rp e^{-6\pi i n_2 z} \rp  \right]_{z=0}.
\end{equation}
We then compute, using \eqref{identity1.2},
\begin{align*}
	&\left[\frac{\del}{\del z} \lp  \Gamma \lp \frac 12 , 6\pi \lp n_2+\frac yv \rp^2 v \rp e^{-6\pi i n_2 z} \rp  \right]_{z=0}
	= 3\pi in_2 \Gamma\left(-\frac 12, 6\pi n_2^2 v\right).
\end{align*}
Thus \eqref{sumn2} equals (upon changing $n_2 \mapsto -n_2$)
\begin{multline*}
	3\pi i q^{\frac{1}{24}} \sum_{n_2\in\Z+\frac 16} (-1)^{n_2-\frac 16} |n_2| \Gamma\left(-\frac 12, 6\pi n_2^2 v\right) q^{-\frac{3n_2^2}{2}}\\
	= 3\pi i q^{\frac{1}{24}} \sum_{n_2\in\Z-\frac 16} (-1)^{n_2+\frac 16} |n_2| \Gamma\left(-\frac 12, 6\pi n_2^2 v\right) q^{-\frac{3n_2^2}{2}}.
\end{multline*}
From this we obtain \eqref{mRm}.

We next determine the transformation laws of $\widehat{\mathcal{L}}$ and of $\widehat{\mathcal{R}}$. By Proposition \ref{Duke} and \eqref{E2},
\begin{equation}\label{R}
	\widehat{\mathcal{L}} (\tau+1) = \widehat{\mathcal{L}}(\tau), \qquad
	\widehat{\mathcal{L}} \lp -\frac{1}{\tau} \rp =\tau^2 \widehat{\mathcal{L}}(\tau) + \frac{i\tau}{2\pi}.
\end{equation}
We show that $\wh{\mathcal{R}}$ satisfies the same transformation laws as $\wh{\mathcal{L}}$.
By Proposition \ref{prop:elliptic} we obtain
\begin{align*}
	\Phi \lp \frac z\tau ;-\frac 1\tau \rp 
	& = \frac{\tau}{\sqrt{3}}  \sum_{\ell\pmod{3}} e^{\frac{2\pi i}{\tau} Q\lp -2z, -\frac 16+ \frac \tau6 +z+ \frac{\ell\tau}{3} \rp} \Theta_{\left(\begin{smallmatrix} 1&3\\3&6\end{smallmatrix}\right)} \lp  \lp -2z, -\frac 16 + \frac \tau6 +z +\frac{\ell\tau}{3} \rp; \tau \rp.
\end{align*}
Changing $z \mapsto -z$ and using Proposition \ref{prop:elliptic} (2), we compute
\begin{multline*}
\left[\frac{\partial^2}{\partial z^2} \left(e^{\frac{2\pi i}{\t} Q\left(-2z, -\frac 16+\frac \tau6+z+ \frac{\ell\tau}{3}\right)} 
\Theta_{\left(\begin{smallmatrix} 1&3\\3&6\end{smallmatrix}\right)} \lp  \lp -2z, -\frac 16 + \frac \tau6 +z +\frac{\ell\tau}{3} \rp; \tau \rp
\right)\right]_{z=0}
	\\ =-\left[\frac{\partial^2}{\partial z^2} \left(e^{\frac{2\pi i}{\t} Q\left(-2z, \frac 16-\frac \tau6+z- \frac{\ell\tau}{3}\right)} 
	 \Theta_{\left(\begin{smallmatrix}1&3\\3&6\end{smallmatrix}\right)}\left(\left(-2z,\frac 16-\frac \tau6+z- \frac{\ell\tau}{3} \right);\tau\right)\right)\right]_{z=0}.
\end{multline*}
Thus
\begin{multline*}
	\mathcal{G} \left(-\frac{1}{\tau}\right) 
	\\ 
	= -\tau^3\frac{e^{-\frac{\pi i}{6\tau}}}{8\sqrt{3}\pi^2} 
	\sum_{\ell \pmod 3}
	\left[\frac{\partial^2}{\partial z^2} \left(e^{\frac{2\pi i}{\t} Q\left(-2z, \frac 16-\frac \tau6+z- \frac{\ell\tau}{3}\right)} \Theta_{\left(\begin{smallmatrix}1&3\\3&6\end{smallmatrix}\right)}\left(\left(-2z,\frac 16-\frac \tau6+z- \frac{\ell\tau}{3} \right);\tau\right)\right)\right]_{z=0}.
\end{multline*}
Now we choose $\ell\in\{0,-1,-2\}$.
Using Proposition \ref{prop:elliptic} (2) and (3), we see that the contribution for $\ell=-2$
is an odd function evaluated at zero, and as such vanishes.

Using Proposition \ref{prop:elliptic} again to relate the remaining theta functions we obtain
\begin{equation}\label{G}
	\calG\left(-\frac 1\t\right) = -\frac{i\t^3}{8\pi^2} q^{\frac{1}{12}}
\left[\frac{\partial^2}{\partial z^2}  \left(e^{-\frac{2\pi iz^2}{\t}}  \Theta_{\left(\begin{smallmatrix} 1&3\\3&6\end{smallmatrix}\right)} \left(\left(2z,\frac 16 + \frac \tau6 +z\right);\tau\right)\right)\right]_{z=0}.
\end{equation}
Now
\begin{multline*}
	\left[\frac{\partial^2}{\partial z^2}  \left(e^{-\frac{2\pi iz^2}{\t}}  \Theta_{\left(\begin{smallmatrix} 1&3\\3&6\end{smallmatrix}\right)} \left(\left(2z,\frac 16 + \frac \tau6 +z\right);\tau\right)\right)\right]_{z=0}\\
	= \left[\frac{\partial^2}{\partial z^2}  \Theta_{\left(\begin{smallmatrix} 1&3\\3&6\end{smallmatrix}\right)} \left(\left(2z,\frac 16 + \frac \tau6 +z\right);\tau\right)\right]_{z=0} - \frac{4\pi i}{\t} \Theta_{\left(\begin{smallmatrix} 1&3\\3&6\end{smallmatrix}\right)} \left(\left(0,\frac 16 + \frac \tau6\right);\tau\right).
\end{multline*}
The first summand contributes
\begin{equation}\label{againG}
	-i\t^3 \calG(\t).
\end{equation}

We next claim that
\begin{equation}\label{TE}
	\Theta_{\left(\begin{smallmatrix} 1&3\\3&6\end{smallmatrix}\right)}  \lp \lp 0,\frac 16 + \frac \t6 \rp;\tau \rp = (q)^2_\infty.
\end{equation}
For this, we make the same changes of variables as in the proof of \eqref{mRm} to obtain that
\begin{equation*}
	\Theta_{\left(\begin{smallmatrix} 1&3\\3&6\end{smallmatrix}\right)}\lp \lp 0,\frac 16 + \frac \t6 \rp ; \tau \rp\\
	=\sum_{\bm{n} \in \Z^2} (-1)^{n_1} \lp \sgn(n_1) + \sgn \lp n_2 + \frac 12 \rp \rp q^{\frac{n_1^2}{2} + \frac{n_1}{2} + 3n_1n_2 + 3n_2^2 +n_2}.
\end{equation*}
One can now show (as above) that both sides of \eqref{TE} satisfy the same transformation law and lie in a one-dimensional space. Computing one coefficient then gives that they are equal.

Using \eqref{R}, \eqref{G}, and \eqref{againG}, we obtain that
\[
\wh{\calR}(\t+1) = \wh{\calR}(\t),\qquad \wh{\calR}\left(-\frac 1\t\right) = \t^2 \wh{\calR}(\t) + \frac{i\t}{2\pi}.
\]
This shows that $\calR(\t)-\calL(\t)$ is a weakly holomorphic modular form of weight two for $\SL_2(\Z)$. Since one can prove that it does not grow it has to be zero.
\end{proof}

\begin{remark}
	Propositions \ref{prop:5.1} and \ref{theorem:5.1} give that $H_{D_5}$ is a mixed mock modular form, as claimed in Theorem \ref{thm:1.3}.
\end{remark}

\section{Graphs with multiple edges: Kontsevich--Zagier type series}

In this part we contemplate graph series with multiple edges. Series of this type do not connect directly with the geometry of jet schemes, but they do naturally appear in vertex algebras (see Theorem \ref{principal}).
Here we focus on the two simplest examples coming from the graph
\begin{center}
$\bullet = \bullet$
\end{center}
of type $B_2$, and from the graph
\begin{center}
$\bullet - \bullet =\bullet$
\end{center} 
of type $B_3$. In the setup of principal subspaces, we consider additional $q$-series arising from cosets in the dual lattices (these compute characters of modules). Thus for $B_2$ we obtain three $q$-series   \begin{align*}
F_1(q):=\sum_{n_1,n_2 \geq 0} \frac{q^{2n_1n_2+n_1+n_2 }}{(q)_{n_1}(q)_{n_2}}, \quad
F_2(q):=\sum_{n_1,n_2 \geq 0} \frac{q^{2n_1n_2+n_1+2n_2}}{(q)_{n_1}(q)_{n_2}}, \quad
F_3(q):=\sum_{n_1,n_2 \geq 0} \frac{q^{2n_1n_2+2n_1+2n_2}}{(q)_{n_1}(q)_{n_2}}.
\end{align*}

Let us now recall two remarkable  false theta functions used to compute unified WRT invariants of the Poincar\'e $3$-sphere \cite{LZ}. For this we let
\[
\chi_+(n) := 
\begin{cases}
	(-1)^{\left\lfloor\frac{n}{30}\right\rfloor} &\text{if } n^2 \equiv 1 \pmod{120},\\
	0 &\text{otherwise},
\end{cases}\qquad
\chi_-(n) := 
\begin{cases}
	(-1)^{\left\lfloor\frac{n}{30}\right\rfloor} &\text{if } n^2 \equiv 49 \pmod{120},\\
	0 &\text{otherwise}.
\end{cases}
\]
Then the two $q$-series
$$
\widetilde{\Theta}_{+}(q):=\sum_{n \geq 1} \chi_{+}(n)q^{\frac{n^2-1}{120}}, \qquad
\widetilde{\Theta}_{-}(q):=\sum_{n \geq 1} \chi_{-}(n)q^{\frac{n^2-49}{120}},
$$
combine into a vector-valued quantum modular form of weight $\frac12$ \cite[Section 4]{LZ}. The following proposition relates the functions $F_1$, $F_2$, and $F_3$ to these false theta functions.
\begin{proposition}\label{prop:6.1}
We have
$$
F_1(q)=\frac{\widetilde{\Theta}_{-}(q)}{(q)_\infty}
, \quad
F_2(q)=\frac{q^{-1}\left(\widetilde{\Theta}_{+}(q)-1\right)}{(q)_\infty}, \quad F_3(q)=\frac{q^{-2}\left(\widetilde{\Theta}_-(q)- \widetilde{\Theta}_+(q) \right)}{(q)_\infty}.
$$ 
\end{proposition}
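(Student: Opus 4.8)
The plan is to use Euler's identity \eqref{109} to collapse the double sums defining $F_1,F_2,F_3$ to one--dimensional ``Kontsevich--Zagier type'' sums, and then to identify these with the false theta functions $\widetilde{\Theta}_{\pm}$ by means of Bailey's Lemma.

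First I would carry out the inner summation: in $F_1$ sum on $n_2$, and in $F_2,F_3$ sum on $n_1$, in each case applying \eqref{109}. Using $(q)_\infty=(q)_{2n}\,(q^{2n+1})_\infty=(q)_{2n+1}\,(q^{2n+2})_\infty$ and $\tfrac{(q)_{2n}}{(q)_n}=(q^{n+1})_n$, this yields
\begin{align*}
F_1(q) &= \frac{1}{(q)_\infty}\sum_{n\ge0} q^{n}\,\frac{(q)_{2n}}{(q)_n}, \\
F_2(q) &= \frac{1}{(q)_\infty}\sum_{n\ge0} q^{2n}\,\frac{(q)_{2n}}{(q)_n}, \\
F_3(q) &= \frac{1}{(q)_\infty}\sum_{n\ge0} q^{2n}\,\frac{(q)_{2n+1}}{(q)_n}.
\end{align*}
Hence the Proposition is equivalent to the three identities
\begin{align*}
\sum_{n\ge0} q^{n}\,\frac{(q)_{2n}}{(q)_n}&=\widetilde{\Theta}_{-}(q), \\
\sum_{n\ge0} q^{2n+1}\,\frac{(q)_{2n}}{(q)_n}&=\widetilde{\Theta}_{+}(q)-1, \\
\sum_{n\ge0} q^{2n+2}\,\frac{(q)_{2n+1}}{(q)_n}&=\widetilde{\Theta}_{-}(q)-\widetilde{\Theta}_{+}(q).
\end{align*}
(One should also note, using $(q)_{2n+1}/(q)_n=(1-q^{2n+1})(q)_{2n}/(q)_n$, the linear relation $F_1(q)-qF_2(q)-q^2F_3(q)=1/(q)_\infty$; granting it, the third identity follows from the first two, although it seems simplest to treat all three uniformly.)

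To establish these ``false theta'' identities I would feed a suitable Bailey pair relative to $(q,q)$ --- of the type occurring in Slater's list, or in \cite{Lovejoy1} which we already used --- into Lemma \ref{L:BaileysLemma}, writing $\tfrac{(q)_{2n}}{(q)_n}=(q^{n+1})_n$ and choosing $\varrho_1,\varrho_2$ so that the $\beta$--side of Bailey's Lemma becomes $\sum_{n\ge0}q^{n}(q^{n+1})_n$, respectively $\sum_{n\ge0}q^{2n+1}(q^{n+1})_n$. The $\alpha$--side then telescopes to a partial (false) theta series $\sum_{N}\sgn(\cdot)\,q^{aN^2+bN+c}$; completing the square exhibits the quadratic exponent with denominator $120$, and a finite check on the residue classes modulo $120$ together with the sign pattern $(-1)^{\lfloor N/30\rfloor}$ identifies this series with $\widetilde{\Theta}_{-}(q)$, respectively $\widetilde{\Theta}_{+}(q)-1$. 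These are exactly the false theta identities of Lawrence--Zagier / Hikami type attached to the Poincar\'e homology sphere $\Sigma(2,3,5)$, so an alternative would be to quote \cite{LZ} (see also \cite{Hikami}) for the first and derive the second by a second Bailey--pair evaluation.

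The main obstacle is the Bailey--pair bookkeeping: one must select the Bailey pair and the specialization of $\varrho_1,\varrho_2$ so that the $\alpha$--side genuinely collapses to a one--dimensional theta series, and then match that series with the precise Lawrence--Zagier data, namely the residues $1$ and $49$ modulo $120$ and, above all, the Seifert sign convention $(-1)^{\lfloor N/30\rfloor}$. A minor additional point is to justify the interchange of summations and the limiting choices of the Bailey parameters; this is harmless, since each relevant series is a genuine power series --- every power of $q$ receives only finitely many contributions, as $q^{n}(q^{n+1})_n=q^{n}+O(q^{2n+1})$ and similarly for the other two.
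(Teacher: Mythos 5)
Your reduction via Euler's identity \eqref{109} to the one--dimensional ``tails'' $\sum_{n\ge0}q^{n}(q^{n+1})_n$ and its companions, together with the linear relation $F_1-qF_2-q^2F_3=1/(q)_\infty$ to dispose of $F_3$, is exactly the paper's route; for the identification with $\widetilde{\Theta}_{\mp}$ the paper simply cites identities (3.13) and (3.14) of \cite{Hikami} (the ``alternative'' you mention) rather than re-deriving them by a Bailey-pair computation, which you in any case only sketch. The one small discrepancy is that by summing out $n_1$ rather than $n_2$ in $F_2$ you arrive at $\sum_{n\ge0}q^{2n+1}(q^{n+1})_n$ instead of the paper's $\sum_{n\ge0}q^{n+1}(q^{n+1})_{n+1}$ for $\widetilde{\Theta}_+(q)-1$ (both are correct), so if you quote \cite{Hikami} or \cite{LZ} you should check which form is actually stated there.
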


\begin{proof} 
Using \eqref{109}, we obtain 
$$
F_2(q)=\sum_{n_1,n_2 \geq 0} \frac{q^{n_1+(2n_1+2)n_2}}{(q)_{n_1}(q)_{n_2}}=\sum_{n_1 \geq 0} \frac{q^{n_1}}{(q)_{n_1} \left(q^{2n_1+2}\right)_\infty} =\frac{1}{(q)_\infty} \sum_{n \geq 0} q^n \left(q^{n+1}\right)_{n+1}.
$$
The sum is known to be $q^{-1}(\widetilde{\Theta}_{+}(q)-1)$ by (3.14) of \cite{Hikami}.  For the first identity, we write
$$
F_1(q)=\sum_{n_1,n_2 \geq 0} \frac{q^{n_1+(2n_1+1)n_2}}{(q)_{n_1}(q)_{n_2}}=\sum_{n_1 \geq 0} \frac{q^{n_1}}{(q)_{n_1} \left(q^{2n_1+1}\right)_\infty}=\frac{1}{(q)_\infty} \sum_{n\ge0} q^{n} \left(q^{n+1}\right)_n,
$$
and use (3.13) of \cite{Hikami}.
Easy manipulations then yield $$F_3(q)=q^{-2} F_1(q)-q^{-1} F_2(q)-\frac{q^{-2}}{(q)_\infty},$$ 
which implies the formula for $F_3(q)$.
\end{proof}

For a graph $\Gamma$ of type $B_3$ we record similar identities.  We first consider a slightly shifted version of $H_{\Gamma}(q)$ given by 
$$
H_1(q):=\sum_{\bm n \in \N_0^3} \frac{q^{2n_1n_2+n_2 n_3+n_1+2n_2+n_3}}{(q)_{n_1}(q)_{n_2}(q)_{n_3}}.
$$
As before, using Euler's identity (\ref{109}) gives
\[
H_1(q) = \frac{q^{-1}}{(q)_\infty} \sum_{n_1 \geq 0} \frac{q^{n_1+1}}{\left(1-q^{2n_1+2}\right)(q)_{n_1}}=\frac{q^{-1}}{(q)_\infty} \sum_{n \geq 0} \frac{q^{n+1}}{\left(1+q^{n+1}\right)(q)_{n+1}}
= \frac{q^{-1}}{(q)_\infty} \sum_{n \geq 1} \frac{q^n}{\left(1+q^n\right)(q)_n}.
\]
We can rewrite the right-hand side using a sum of tails identity \cite[Theorem 4.1]{AF} as
$$
H_1(q)=\frac{q^{-1}}{(q)_\infty^2} \sum_{n \geq 1} (-1)^n \left( (q)_n - (q)_\infty \right).
$$
By \eqref{id2} with $\z=-1$, we have
$$
\sum_{n \geq 0} (-1)^n \left( (q)_n - (q)_\infty \right){=}\sum_{n \geq 1} q^{2n-1}(q)_{2n-2}.
$$
This can be further expressed, using \eqref{tail}, as
$$
\frac12 \left(-(q)_\infty+1+\sum_{n \geq 0} (-1)^n q^{n+1}(q)_n \right).
$$
The last sum, $\sigma(q)=1+\sum_{n \geq 0} (-1)^n q^{n+1}(q)_n$, is a quantum modular of weight zero (see the examples in \cite{ZaQ}). 
For the graph series 
$$
H_2(q):=\sum_{\bm n \in \N_0^3} \frac{q^{2n_1n_2+n_2 n_3+n_1+n_2+n_3}}{(q)_{n_1}(q)_{n_2}(q)_{n_3}},
$$
we first deduce that
$$
H_2(q)=\frac{1}{(q)_\infty} \sum_{n_1 \geq 0} \frac{q^{n_1}}{(1-q^{2n_1+1})(q)_{n_1}}=\frac{1}{(1-q)(q)_\infty}+\frac{1}{(q)_\infty^2} \sum_{n \geq 0} q^n((q)_{2n}-(q)_\infty),
$$
by \cite[Theorem 4.1]{AF}  and  \cite[Theorem 2]{Za}. 
Then using \eqref{id2} (we extract the even powers of $\z$ and let $\z=q$) gives 
\begin{align*}
\frac{1}{(1-q)(q)_\infty}+\frac{1}{(q)_\infty^2} \sum_{n \geq 0} q^n\left((q)_{2n}-(q)_\infty\right) 
=\frac{1}{(1-q)(q)^2_\infty}\left(1-\sum_{n \geq 0} \left( q^{3n+2} (q)_{2n}+q^{3n+3}(q)_{2n+1} \right) \right),
\end{align*}
where we empoy that (by \eqref{tail})
$$
\sum_{n \geq 0} \left(q^{2n+1}(q)_{2n}+q^{2n+2}(q)_{2n+1}\right)=1-(q)_\infty.
$$

\section{Further examples and the proof of \eqref{HE6}}
Here we consider a few more complicated graphs.

\subsection{$3$-cycle}

As shown in \cite{JM}, for the three cycle graph $\Gamma$ we have
\begin{gather*}
\sum_{\bm n \in \N_0^3}
\frac{q^{n_1n_2+n_1n_3+n_2n_3+n_1+n_2+n_3}}{(q)_{n_1}(q)_{n_2}(q)_{n_3}}
= \frac{1}{(q)_\infty} \sum_{n\ge0}\frac{q^n}{(q^{n+1})_{n+1}},
\end{gather*}
where the sum on the right-hand side is $\chi_1(q)$, a fifth order mock theta function of Ramanujan. Such mock theta functions were introduced in Ramanujan's last letter to Hardy.

Since ${\rm det}(C_\Gamma)=2$ for $C_\Gamma=\left( \begin{smallmatrix} 0 & 1 & 1 \\ 1 & 0 & 1 \\ 1 & 1 & 0 \end{smallmatrix} \right)$, it is natural to consider an additional $q$-series coming from 
the nontrivial coset.
We obtain a related identity for another fifth order mock theta function $\chi_0$
\begin{gather*}
\sum_{\bm n \in \N_0^3}
\frac{q^{n_1n_2+n_1n_3+n_2n_3+2n_1+n_2+n_3}}{(q)_{n_1}(q)_{n_2}(q)_{n_3}}
= \frac{1}{(q)_\infty} \sum_{n\ge0}\frac{q^n}{(q^{n+2})_{n+1}}=\frac{q^{-1}}{(q)_\infty} \left(\chi_0(q)-1) \right).
\end{gather*}
Interestingly, $\chi_0(q)$ and $\chi_1(q)$ combine into a vector-valued quantum modular form \cite{Hikami}.

\subsection{Graph series of $E_6$}
Now $\Gamma$ is 
\begin{center}
$        \bullet        $

$        |        $

$\bullet--\bullet--\bullet--\bullet--\bullet$
\end{center}
and the graph series is 
$$H_{E_6}(q)=\sum_{\bm n \in \N_0^6}  \frac{q^{n_1 n_2+n_1 n_3+n_1n_4+n_2 n_5+n_3 n_6+n_1+n_2+n_3+n_4+n_5+n_6}}{(q)_{n_1}(q)_{n_2}(q)_{n_3}(q)_{n_4}(q)_{n_5}(q)_{n_6}} .$$

We next prove \eqref{HE6}.
\begin{proof}[Proof of \eqref{HE6}]
We compute
\begin{align*}
H_{E_6}(q)
&=
	\sum_{\bm n \in \N_0^6}
	\frac{ q^{n_1(n_2+n_3+n_4)+n_2n_5+n_3n_6+n_1+n_2+n_3+n_4+n_5+n_6}  }
	{(q)_{n_1}(q)_{n_2}(q)_{n_3}(q)_{n_4}(q)_{n_5}(q)_{n_6}}
\\
&=
	\frac{1}{(q)_\infty^2}
	\sum_{n_1,n_2,n_3,n_4\ge0}
	\frac{ q^{n_1(n_2+n_3+n_4)+n_1+n_2+n_3+n_4}  }
	{(q)_{n_1}(q)_{n_4}}
=
	\frac{1}{(q)_\infty^3}
	\sum_{n_1,n_2,n_3\ge0}
	q^{n_1(n_2+n_3)+n_1+n_2+n_3} 
\\
&=
	\frac{1}{(q)_\infty^3}
	\sum_{n\ge0}
	\frac{q^{n} }{\left(1-q^{n+1}\right)^2}
=
	\frac{q^{-1}}{(q)_\infty^3}
	\sum_{n\ge1}
	\frac{q^n}{\left(1-q^n\right)^2}
=
	\frac{q^{-1}}{(q)_\infty^3}
	\sum_{n\ge1}
	\frac{nq^n}{1-q^{n}}
. \qedhere
\end{align*}
\end{proof}

We finish with two examples coming from affine Dynkin diagrams.

\subsection{$\tt{H}$-graph (or  $D_5^{(1)}$)}
Here we consider $\Gamma$ to be an $H$ graph as in the picture:
\begin{center}
$        \bullet   -- \bullet -- \bullet    $

$       |   $

$        \bullet   -- \bullet -- \bullet    $
\end{center}
This graph series is given by 
$$
H_\Gamma(q)=\sum_{\bm n \in \N_0^6} \frac{ q^{n_1n_2+n_1n_3+n_1 n_4 + n_4n_5+n_4n_6+n_1+n_2+n_3+n_4+n_5+n_6} }{(q)_{n_1}(q)_{n_2}(q)_{n_3}(q)_{n_4}(q)_{n_5}(q)_{n_6}}.
$$
From this we easily find that
$$
H_\Gamma(q)=\frac{1}{(q)_\infty^4} \sum_{n,m \geq 0 }   q^{mn+m+n} (q)_m (q)_n.
$$
It would be interesting to explore modular properties of this double series. We believe that it is a (higher depth) quantum modular form.

\subsection{$T_2$-graph (or $E_6^{(1)}$)} The next example is obtained by adding an extra node to an $E_6$ graph.
\begin{center}
$        \bullet        $

$        |        $

$        \bullet        $

$        |        $

$\bullet--\bullet--\bullet--\bullet--\bullet$
\end{center}
Here we get 
\begin{align*}
 H_{T_2}(q)&=
\sum_{\bm n \in \N_0^7} \frac{q^{n_1 n_6+n_2 n_4+n_2 n_5+n_2 n_6+n_3 n_4+n_3 n_5+n_1+n_2+n_3+n_4+n_5+n_6+n_7}}{(q)_{n_1}(q)_{n_2}(q)_{n_3}(q)_{n_4}(q)_{n_5}(q)_{n_6}(q)_{n_7}} \\
&=
\frac{1}{(q)^4_\infty}\sum_{n_1,n_2,n_3 \geq 0} q^{n_1+n_2+n_3} (q)_{n_1+n_2+n_3}= \frac{1}{2 (q)^4_\infty}\sum_{n \geq 0} \left(n^2+3n+2\right) q^{n}(q)_n.
\end{align*}
The last sum can be expressed as a sum of tails by differentiating (\ref{id2}) with respect to $\zeta$ and then letting $\zeta=1$. This immediately gives
\begin{equation*}
H_{T_2}(q)= \frac{q^{-1}}{(q)^4_\infty} \sum_{n \geq 0} (n+1) \left( (q)_n - (q)_\infty \right).
\end{equation*}
 Since $\sum_{n \geq 0} ( (q)_n-(q)_\infty)$ is a quantum modular form, it would be interesting to investigate modular properties of the sum $$\sum_{n \geq 0} n \left((q)_n-(q)_\infty \right).$$

\section{Conclusion and open questions}

We hope that this paper generates interest in graph series and their modular properties. 
However, unlike Nahm sums, they seem not to give rise to usual modular forms even for very simple  graphs. 
Instead we obtain interesting combinations of mixed quantum and mock modular forms. This raises a natural question:

\smallskip

{\em Is there a simple graph $\Gamma$, not totally disconnected, and $a \in \mathbb{Q}$, such that $q^a H_{\Gamma}(q)$ is a modular form?}

\smallskip

We point out that for many examples we are not aware of any modular properties. This is the case for the following 
graphs:
\begin{align*}
& \frac{q^{-1}}{(q)^4_\infty} \sum_{n \geq 0} (n+1) \left( (q)_n - (q)_\infty \right), \quad  \frac{1}{(q)_\infty^\ell} \sum_{n \geq 0} q^n (q)_n^{\ell-1}, \quad \ell \geq 4 \\
& \frac{1}{(q)_\infty^4} \sum_{n,m \geq 0 }   q^{nm+m+n} (q)_n (q)_m, \quad \frac{1}{(q)_\infty} \sum_{n,m \geq 1} \frac{q^{nm}}{(q)_{n+m-1}},
\end{align*}
corresponding to $T_2$, $\ell$-star graphs $X_{\ell}, \ell \geq 4$, $H$, and $4$-cycle graphs, respectively. We hope to return to these examples in future work.


\begin{thebibliography}{99}


	

	\bibitem{AndrewsBook}	
	G. Andrews,
	\newblock \emph{{$q$}-series: their development and application in analysis,
  		number theory, combinatorics, physics, and computer algebra}, vol.~66 of
  	\emph{CBMS Regional Conference Series in Mathematics}.
	\newblock Published for the Conference Board of the Mathematical Sciences,
  	Washington, DC; by the American Mathematical Society, Providence, RI (1986).

        \bibitem{stacks} G. Andrews, {\em Stacked lattice boxes}, Annals of Combinatorics \textbf{3} (1999), 115--130.

	\bibitem{Andrews1} G. Andrews, {\it Concave and convex compositions}, The Ramanujan Journal {\bf 31} (2013), 67--82.
	
	\bibitem{AGL} G. Andrews, F. Garvan, and J. Liang, {\em Self-conjugate vector partitions and the parity of the spt-function}, Acta Arithmetica {\bf 158} (2013), 199--218.
	
	\bibitem{AF} G. Andrews and P. Freitas, {\em Extension of Abel’s Lemma with q-series implications}, The Ramanujan Journal \textbf{10} (2005), 137–152.
	
	\bibitem{ARE} G. Andrews, J. van Ekeren, and R. Heluani, {\em The singular support of the Ising model}, arXiv:2005.10769.
	
	\bibitem{Arakawa}  T. Arakawa, K. Kawasetsu, and J. Sebag,  {\it A question of Joseph Ritt from the point of view of vertex algebras}, arXiv:2009.04615.

  	\bibitem{Gorsky} Y. Bai, E. Gorsky, and O. Kivinen, {\em Quadratic ideals and Rogers–Ramanujan recursions}, The Ramanujan Journal {\bf 52} (2020), 67--89.

	
	
	\bibitem{BO} P. Beirne and R. Osburn, {\em q-series and tails of colored Jones polynomials}, Indagationes Mathematicae {\bf 28} (2017), 247--260.
	
	\bibitem{BC} S. Bettin and B. Conrey,  {\em Period functions and cotangent sums}, Algebra Number Theory \textbf{7} (2013), 215--242.
	
	\bibitem{35} K. Bringmann, {\it Taylor coefficients of non-holomorphic Jacobi forms and applications}. Research in the Mathematical Sciences {\bf 5:15} (2018). 
	
	\bibitem{M} C. Bruschek, H. Mourtada,  and J. Schepers, {\em Arc spaces and the Rogers–Ramanujan identities}, The Ramanujan Journal \textbf{30} (2013), 9--38.
	
	\bibitem{BOPR} J. Bryson, K. Ono, S. Pitman, and R. Rhoades, {\it Unimodal sequences and quantum and mock modular forms}, Proceedings of the National Academy of Sciences {\bf 109} (2012), 16063--16067.

  	\bibitem{CGZ}  F. Calegari, S. Garoufalidis, and D. Zagier, {\em Bloch groups, algebraic K-theory, units, and Nahm's Conjecture}, arXiv:1712.04887.
  	
  	\bibitem{CLM} C. Calinescu, J. Lepowsky, and A. Milas, {\em Vertex-algebraic structure of the principal subspaces of certain-modules, I: level one case}, International Journal of Mathematics {\bf 19} (2008), 71--92.
       
       \bibitem{CLM2} C. Calinescu, J. Lepowsky, and A. Milas, {\em Vertex-algebraic structure of the principal subspaces of certain $A_1^{(1)}$-modules}, II: Higher-level case. Journal of Pure and Applied Algebra, {\bf 212} (8), (2008), 1928--1950.
       
\bibitem{CNV}  S. Cecotti, A. Neitzke, and C. Vafa, {\em R-twisting and 4d/2d correspondences}, arXiv:1006.3435.
       
   	\bibitem{CF} I. Cherednik and B. Feigin, {\em Rogers–Ramanujan type identities and Nil-DAHA}, Advances in Mathematics {\bf 248} (2013), 1050--1088.
      
      \bibitem{FF} B. Feigin and E. Frenkel, {\em Coinvariants of nilpotent subalgebras of the Virasoro algebra and partition identities}, Adv. Sov. Math 16, no. 1003 (1993): 139-148.
          
  	\bibitem{FS} B. Feigin and A. Stoyanovsky, {\em Quasi-particles models for the representations of Lie algebras and geometry of flag manifold}, arXiv hep-th/9308079.
       
   \bibitem{Fine} N. Fine, {\em Basic Hypergeometric series}, American Mathematical Society, 1988.
       
	\bibitem{GL} S. Garoufalidis and T. Le, {\it Nahm sums, stability and the colored Jones polynomial}, Research in the Mathematical Sciences {\bf 2} (2015), 1--55.

 	\bibitem{Gupta} R. Gupta, {\em On sum-of-tails identities}, arXiv:2002.00447.

      
	\bibitem{Hikami} K. Hikami, {\em Mock (false) theta functions as quantum invariants}, Regular and Chaotic Dynamics \textbf{10} (2005), 509--530.

	\bibitem{JM} C. Jennings-Shaffer and A. Milas, {\it  Further q-series identities and conjectures relating false theta functions and characters}, {Contemporary Mathematics}, to appear; arXiv:2005.13620.
	
	\bibitem{Kawasetsu} K. Kawasetsu, {\em The free generalized vertex algebras and generalized principal subspaces}, Journal of Algebra \textbf{ 444} (2015), 20--51.
	
	\bibitem{KO} A. Keilthy and R. Osburn, {\em Rogers–Ramanujan type identities for alternating knots},  Journal of Number Theory {\bf 161} (2016), 255--280.
	
	
	\bibitem{KS}  M. Kontsevich and Y. Soibelman, {\em Stability structures, motivic Donaldson-Thomas invariants and cluster transformations}, arXiv:0811.2435.
	
	\bibitem{Kucharski} P. Kucharski, P. Reineke, M. Sto\v{s}i\'c, and P. Sulkowski, {\em Knots-quivers correspondence,} Advances in Theoretical and Mathematical Physics {\bf 23} (2020), 1849--1902.
	
	\bibitem{LZ} R. Lawrence and D. Zagier, {\it Modular forms and quantum invariants of 3-manifolds}, Asian Journal of Mathematics {\bf 3} (1999), 93--108.
	
	\bibitem{Lerch} M. Lerch,  {\em Pozn\'amky k theorii funkc\'{i} elliptick\'ych}, Rozpravy \v{C}eske Akademie II. 1 {\bf 24} (1892), 465–480.
	
	\bibitem{Li-Thesis} H. Li,  {\it Associated Varieties of Vertex Superalgebras and Equivariant Oriented Cohomology}, PhD thesis, SUNY-Albany, to appear. 

	\bibitem{Li} H. Li, {\it  Some remarks on associated varieties of vertex operator superalgebras,} European Journal of Mathematics, to appear; arXiv:2007.04522.

	\bibitem{LM1} H. Li and A. Milas, {\it  Jet schemes, quantum dilogarithm and Feigin-Stoyanovsky's principal subspaces,}  arXiv:2010.02143.
	
	\bibitem{LM2} H. Li and A.Milas, {\it Principal subspaces of some non-standard modules and their arc spaces}, preprint.
	
	\bibitem{Lovejoy1}J.~Lovejoy, {\it Lacunary partition functions}, Mathematical Research Letters \textbf{9} (2002), 2--3.
	
	
	\bibitem{MP} A. Milas and M. Penn, {\it Lattice vertex algebras and combinatorial bases: general case and $W$-algebras,} The New York Journal of Mathematics {\bf18} (2012), 621--650.

    \bibitem{NRT} W. Nahm, A. Recknagel, and M. Terhoeven, {\em  Dilogarithm identities in conformal field theory}, Modern Physics Letters A {\bf 8} (1993), 1835--1847.
    
	\bibitem{NR} H. Ngo and R. Rhoades, \textit{Integer partitions, probabilities and quantum modular forms}, Research in the Mathematics Sciences \textbf{4:17} (2017).

	\bibitem{Penn} M. Penn, {\em Lattice vertex superalgebras, I: Presentation of the principal subalgebra}, Communications in Algebra {\bf 42} (2014), 933--961.
	
	
	\bibitem{Rh} R. Rhoades, \textit{Asymptotics for the number of strongly unimodal sequences}, International Mathematics Research Notices (2014), 700--719.
	
	\bibitem{WZ}  O. Warnaar and W. Zudilin, {\em Dedekind's $\eta$-function and Rogers–Ramanujan identities}, Bulletin of the London Mathematical Society \textbf{44}  (2012), 1--11.
	
    \bibitem{Za} D. Zagier, {\em Vassiliev invariants and a strange identity related to the Dedekind eta-function}, Topology \textbf{40} (2001), 945–960.
        
    \bibitem{Za2006} D. Zagier, {\em The Mellin transform and other useful analytic techniques, Appendix to E. Zeidler, Quantum Field Theory I: Basics in Mathematics and Physics. A bridge between mathematicans and physicists,} Springer-Verlag, Berlin-Heidelberg-New York (2006), 305--323.
    
    \bibitem{ZaQ} D. Zagier, {\em Quantum modular forms}, Quanta of Mathematics {\bf 11} (2010), 659--675.
    
    \bibitem{Za20} D. Zagier, {\it Holomorphic quantum modular forms lecture}, special program ``Dynamics: Topology and numbers at the Hausdorf Center for Mathematics'', Spring 2020.
    
   	\bibitem{ZW2} S. Zwegers, {\em Mock theta functions}, PhD thesis, Utrecht University, 2002.
		
\end{thebibliography}
\end{document}